\DeclareMathOperator{\diag}{diag}
\DeclareMathOperator*{\argmin}{arg\,min}
\numberwithin{equation}{section}
\theoremstyle{plain}
\newtheorem{proposition}{Proposition}
\newtheorem{theorem}{Theorem}
\newtheorem{conjecture}[theorem]{Conjecture}
\def\Cplusplus{{\rm C\raise.3ex\hbox{\small++ }}}
\begin{document}
\begin{frontmatter}
\title{Many-server queues with customer abandonment: Numerical analysis of their
diffusion models\protect\thanksref{T1}}
\runtitle{Numerical analysis of diffusion models}
\thankstext{T1}{Supported in part by NSF Grants CMMI-0727400, CMMI-0825840,
and CMMI-1030589.}
\begin{aug}
\author{\fnms{Shuangchi} \snm{He}
\ead[label=e2]{heshuangchi@gatech.edu}}
\and
\author{\fnms{ J. G.} \snm{Dai}
\ead[label=e1]{dai@gatech.edu}}
\runauthor{S. He and J. G. Dai}
\affiliation{Georgia Institute of Technology}
\address{Shuangchi He\\
H. Milton Stewart School of Industrial\\
~~ and Systems Engineering\\
Georgia Institute of Technology\\
Atlanta, Georgia 30332, USA\\
\printead{e2}}
\address{J. G. Dai\\
H. Milton Stewart School of Industrial\\
~~ and Systems Engineering\\
Georgia Institute of Technology\\
Atlanta, Georgia 30332, USA\\
\printead{e1}}
\end{aug}
\begin{abstract}
We use multidimensional diffusion processes to approximate the dynamics of a
queue served by many parallel servers. The queue is served in the
first-in-first-out (FIFO) order and the customers waiting in queue may abandon
the system without service. Two diffusion models are proposed in this paper.
They differ in how the patience time distribution is built into them. The
first diffusion model uses the patience time density at zero and the second
one uses the entire patience time distribution. To analyze these diffusion
models, we develop a numerical algorithm for computing the stationary
distribution of such a diffusion process. A crucial part of the algorithm is
to choose an appropriate reference density. Using a conjecture on the tail
behavior of a limit queue length process, we propose a systematic approach to
constructing a reference density. With the proposed reference density, the
algorithm is shown to converge quickly in numerical experiments. These
experiments also show that the diffusion models are good approximations for
many-server queues, sometimes for queues with as few as twenty servers.
\end{abstract}
\begin{keyword}[class=AMS]
\kwd[Primary ]{60K25}
\kwd{60J70}
\kwd{65R20}
\kwd[; secondary ]{68M20}
\kwd{90B22}
\end{keyword}
\begin{keyword}
\kwd{diffusion process}
\kwd{stationary distribution}
\kwd{phase-type distribution}
\kwd{many-server queue}
\kwd{heavy traffic}
\kwd{customer abandonment}
\kwd{quality- and efficiency-driven regime}
\end{keyword}
\end{frontmatter}

\section{Introduction}

\label{sec:Introduction}

The focus of this paper is the numerical analysis of multidimensional
diffusion processes that approximate the dynamics of a queue with many
parallel servers. A many-server queue serves as a building block modeling
operations of a large-scale service system. Such a service system may be a
call center with hundreds of agents, a hospital department with tens or
hundreds of inpatient beds, or a computer cluster with many processors. When
the customers of a service system are human beings, some of them may abandon
the system before their service begins. The phenomenon of customer abandonment
is ubiquitous because no one would wait for service indefinitely. As argued in
\cite{GMR02}, one must model customer abandonment explicitly in order for an
operational model to be relevant for decision making. We model customer
abandonment by assigning each customer a patience time. When a customer's
waiting time for service exceeds his patience time, he abandons the queue
without service.

The exact analysis of such a many-server queue has been largely limited to an
$M/M/n+M$ model (also called an Erlang-A model) that has a Poisson arrival
process and exponential service and patience time distributions. See, e.g.,
\cite{GMR02}. However, as pointed out by \cite{BMSZZS05}, the service time
distribution in a call center appears to follow a log-normal distribution.
Such distributions have also been observed by \cite{acdds10} for lengths of
stay in a hospital. Moreover, the patience time distribution in a call center
has been observed to be far from exponential by \cite{ZelMan05}. With a
general service or patience time distribution, there is no finite-dimensional
Markovian representation of the queue. Except computer simulations, there is
no method to exactly analyze such a queue either analytically or numerically.
To deal with the challenge, the following strategies are adopted in this paper
for analyzing a many-server queue.

First, the service time distribution is restricted to be phase-type. Since
phase-type distributions can be used to approximate any positive-valued
distribution, such a queueing model is still relevant to practical systems. We
focus on a $GI/Ph/n+GI$ queue with $n$ identical servers. The first $GI$
indicates that the customer interarrival times are independent and identically
distributed (iid) following a general distribution, the $Ph$ indicates that
the service times are iid following a phase-type distribution, and the $+GI$
indicates that the patience times are iid following a general distribution.
Second, we are particularly interested in a queue operating in the
\emph{Quality- and Efficiency-Driven (QED)} regime: The queue has a large
number of servers and the arrival rate is high; the arrival rate and the
service capacity are approximately balanced so that the mean waiting time is
relatively short compared with the mean service time. As argued in
\cite{GMR02}, such a system has high server utilization as well as short
customer waiting times and a small fraction of abandonment. Therefore, both
quality and efficiency can be achieved in this regime. Third, rather than
analyzing the many-server queue itself, we propose and analyze diffusion
models that approximate the queue. Two diffusion models are proposed in this
paper. In each model, a multidimensional diffusion process is used
to represent the scaled customer numbers among service phases. The difference
between the two diffusion models lies in how the patience time distribution is
built into them. The first diffusion model uses the patience time density at
zero and the second one uses the entire patience time distribution. In
particular, the diffusion process in the first model is a multidimensional
piecewise Ornstein-Uhlenbeck (OU) process. We propose an algorithm in this
paper to numerically solve the stationary distribution of a diffusion process.
The computed stationary distribution is used to estimate the performance
measures of a many-server queue. Numerical examples in
Section~\ref{sec:Numerical} demonstrate that the diffusion models are very
accurate in predicting the performance of a many-server queue, even if the
queue has as few as twenty servers.

Except for the one-dimensional case, the stationary distribution of a
piecewise OU process has no explicit formula. The algorithm proposed in this
paper is a variant of the one in \cite{daihar92}, which computes the
stationary distribution of a semimartingale reflecting Brownian motion (SRBM).
As in \cite{daihar92}, the starting point of our algorithm is the basic
adjoint relationship that characterizes the stationary distribution of a
diffusion process. With an appropriate reference density, the algorithm can
produce a stationary density that satisfies this relationship.

We set up a Hilbert space using the reference density. In this space, the
stationary density is orthogonal to an infinite-dimensional subspace $H$. A
finite-dimensional subspace $H_{k}$\ is used to approximate $H$\ and a
function orthogonal to $H_{k}$ can be numerically computed by solving a system
of finitely many linear equations. This function is used to approximate the
stationary density. There are two sources of error in computing the
approximate stationary density by our algorithm: \emph{approximation error}
and \emph{round-off error}. The approximation error arises because $H_{k}$ is
an approximation of $H$. As $H_{k}$ increases to $H$, the approximation error
decreases to zero. The round-off error occurs because the solution to the
system of linear equations has error due to the finite precision of a
computer. As $H_{k}$ increases to $H$, the dimension of the linear system gets
higher and the coefficient matrix becomes closer to singular. As a
consequence, the round-off error increases. The condition number of the matrix
is used as a proxy for the round-off error. Balancing the approximation error
and the round-off error is an important issue in our algorithm.

A properly chosen reference density is essential for the convergence of the
algorithm. By convergence, we mean that the approximation error converges to
zero as $H_{k}$ increases to $H$. More importantly, a \textquotedblleft
good\textquotedblright\ reference density can make $H_{k}$ converge to $H$
quickly so that the resulting approximation error and round-off error are
small simultaneously even though the dimension of $H_{k}$ is moderate. To
ensure the convergence of the algorithm, the reference density should have a
comparable or slower decay rate than the stationary density. Since the
stationary density is unknown, we make a conjecture on the tail behavior of
the limit queue length process of many-server queues with customer
abandonment. We conjecture that the limit queue length process has a Gaussian
tail and the tail depends on the service time distribution only through its
first two moments. This tail is used to construct a product-form reference
density. With this reference density, the algorithm appears to converge
quickly, producing stable and accurate results. For comparison purposes, we
also test the algorithm with a certain \textquotedblleft
naively\textquotedblright\ chosen reference density in
Section~\ref{sec:InfluenceReference}. The algorithm fails to converge with the
\textquotedblleft naive\textquotedblright\ reference density. The major
contributions of this paper are the proposed diffusion models and the proposed
reference densities that are critical to the numerical algorithm for computing
the stationary distribution of a diffusion model.

Our diffusion models are obtained by replacing certain scaled renewal
processes by Brownian motions. The replacement procedure is rooted in the
many-server heavy traffic limit theorems that are proved in an asymptotic
regime. The two diffusion model proposed in this paper are motivated by the
diffusion limits proved in \cite{DaiHeTezcan10} and \cite{ReedTezcan09}. See
Section~\ref{sec:models} for more details. The theory of diffusion
approximation for many-server queues can be traced back to the seminal paper
by \cite{HalWhi81}, where a diffusion limit was established for $GI/M/n$
queues. \cite{GMR02} proved a diffusion limit for $M/M/n+M$ queues that allows
for customer abandonment, and \cite{Whitt05a} generalized the result to
$G/M/n+M$ queues. \cite{PuhalskiiReiman00} established a diffusion limit for
$GI/Ph/n$ queues. Their result was extended to $G/Ph/n+GI$ queues with
customer abandonment in \cite{DaiHeTezcan10}. Recently, \cite{ReedTezcan09}
proved a diffusion limit for $GI/M/n+GI$ queues. In their framework, a refined
limit process is obtained by scaling the patience time hazard rate function.

\cite{harngu90} derived Brownian models for multiclass open queueing networks.
Their diffusion models are SRBMs and are rooted in the conventional heavy
traffic limit theorems that are pioneered in \cite{iglwhi70b} for serial
networks and \cite{rei84} for single-class networks. See \cite{Williams96} for
a survey of limit theorems in literature. For a two-dimensional SRBM living in
a rectangle, \cite{daihar91} proposed an algorithm computing its stationary
distribution. \cite{daihar92} extended the algorithm for an SRBM living in an
orthant. To deal with the unbounded state space, the notion of a reference
density was first introduced there. Their finite-dimensional space $H_{k}$ is
constructed via (global) multinominals of order at most $k$. With this choice
of $H_{k}$, the algorithm appears numerically unstable occasionally. In such a
case, the round-off error may dominate the approximation error while the
approximation error is still significant. \cite{scdd02} extended
\cite{daihar91} to a hypercube state space of an arbitrary dimension. They
used a finite element method to construct $H_{k}$ to avoid numerical
instability. Their algorithm sometimes converges slowly because they did not
explore a reference density. A linear programming algorithm for computing the
stationary distribution of a diffusion process was proposed in
\cite{SaureGlynnZeevi10}. Both SRBMs in an orthant and a diffusion
approximation of many-server queues with two priority classes were
investigated in their paper. Like the role of the reference density, it
appears that the rescaling of variables is essential to the convergence of
their algorithm.

The remainder of the paper is organized as follows. General diffusion
processes are introduced in Section~\ref{sec:DiffusionProcesses}, where the
basic adjoint relationship for a diffusion process is also presented. In
Section~\ref{sec:Algorithm}, we begin with recapitulating the generic
algorithm of \cite{daihar92}, and then propose a finite element implementation
that follows \cite{scdd02}. Two diffusion models for $GI/Ph/n+GI$ queues are
presented in Section~\ref{sec:DiffusionQueues}. In
Section~\ref{sec:ReferenceDensity}, we discuss how to choose an appropriate
reference density exploiting the tail behavior of a diffusion process. In
Section~\ref{sec:Numerical}, it is demonstrated via numerical examples that
the diffusion models serve as good approximations of many-server queues.
Section~\ref{sec:Implementation} is dedicated to some implementation issues
arising from the proposed algorithm. The paper is concluded in
Section~\ref{sec:Conclusion}. We leave the proofs of
Propositions~\ref{prop:independence} and~\ref{prop:Hn} to the appendix.

\subsection*{Notation}

The symbols $\mathbb{N}$, $\mathbb{R}$, and $\mathbb{R}_{+}$ are used to
denote the sets of positive integers, real numbers, and nonnegative real
numbers, respectively. For $d,m\in\mathbb{N}$, $\mathbb{R}^{d}$ denotes the
$d$-dimensional Euclidean space and $\mathbb{R}^{d\times m}$ denotes the space
of $d\times m$ real matrices. We use $C_{b}^{2}(\mathbb{R}^{d})$ to denote the
set of real-valued functions on $\mathbb{R}^{d}$ that are twice continuously
differentiable with bounded first and second derivatives. For $z,w\in
\mathbb{R}$, we set $z^{+}=\max\{z,0\}$, $z^{-}=\max\{-z,0\}$, and $z\wedge
w=\min\{z,w\}$. All vectors are envisioned as column vectors. For a
$d$-dimensional vector $x\in\mathbb{R}^{d}$, we use $x_{j}$ for its $j$th
entry and $\operatorname*{diag}(x)$ for the $d\times d$ diagonal matrix with
$j$th diagonal entry $x_{j}$. For a matrix $M$, $M^{\prime}$ denotes its
transpose, $M_{ij}$ denotes its $(i,j)$th entry, and $|M|=(\sum_{i,j}%
M_{ij}^{2})^{1/2}$. We reserve $I$ for the $d\times d$ identity matrix, $e$
for the $d$-dimensional vector of ones, and $e^{j}$ for the $d$-dimensional
vector with its $j$th entry one and all other entries zero. Given two
functions $\varphi$ and $\hat{\varphi}$ from $\mathbb{N}$ to $\mathbb{R}$, we
write $\hat{\varphi}(n)=O(\varphi(n))$ as $n\rightarrow\infty$ if there exists
a constant $\kappa>0$ and some $n_{0}\in\mathbb{N}$ such that $|\hat{\varphi
}(n)|\leq\kappa|\varphi(n)|$ for all $n>n_{0}$.

\section{Diffusion processes}

\label{sec:DiffusionProcesses}

Let $d$ be a positive integer. This paper focuses on a $d$-dimensional
diffusion process $X=\{X(t):t\geq0\}$. Let $(\Omega,\mathcal{F},\mathbb{F}%
,\mathbb{P})$\ be a filtered probability space with filtration $\mathbb{F}%
=\{\mathcal{F}_{t}:t\geq0\}$. We assume that $X$ satisfies the following
stochastic differential equation
\begin{equation}
X(t)=X(0)+\int_{0}^{t}b(X(s))\,\mathrm{d}s+\int_{0}^{t}\sigma
(X(s))\,\mathrm{d}B(s), \label{eq:diffusion}%
\end{equation}
where the drift coefficient $b$ is a function from $\mathbb{R}^{d}$ to
$\mathbb{R}^{d}$, the diffusion coefficient $\sigma$ is a function from
$\mathbb{R}^{d}$ to $\mathbb{R}^{d\times m}$, and $B=\{B(t):t\geq0\}$ is an
$m$-dimensional standard Brownian motion with respect to $\mathbb{F}$. We
assume that both $b$ and $\sigma$ are Lipschitz continuous, i.e., there exists
a constant $c_{1}>0$ such that
\begin{equation}
\left\vert b(x)-b(y)\right\vert +\left\vert \sigma(x)-\sigma(y)\right\vert
\leq c_{1}|x-y|\qquad\text{for all }x,y\in\mathbb{R}^{d}\text{.}
\label{eq:Lipschitz}%
\end{equation}
Under condition (\ref{eq:Lipschitz}), the stochastic differential equation
(\ref{eq:diffusion}) has a unique strong solution, i.e., there exists a unique
process $X$ on $(\Omega,\mathcal{F},\mathbb{F},\mathbb{P})$ such that (a) $X$
is adapted to $\mathbb{F}$, (b) for each sample path $\omega\in\Omega$,
$X(t,\omega)$ is continuous in $t$, and (c) for each $t\geq0$, the stochastic
differential equation (\ref{eq:diffusion}) holds with probability one. See
\cite{Oksendal03} for more details. We also assume that $\sigma$ is uniformly
elliptic, i.e., there exists a constant $c_{2}>0$ such that
\begin{equation}
y^{\prime}\Sigma(x)y\geq c_{2}y^{\prime}y\qquad\text{ for all }x,y\in
\mathbb{R}^{d}\text{,} \label{eq:uelliptic}%
\end{equation}
where
\begin{equation}
\Sigma(x)=\sigma(x)\sigma^{\prime}(x). \label{eq:Sigma}%
\end{equation}

We are interested in the diffusion processes that model the dynamics of a
queue with many parallel servers. Parallel-server queues will be introduced in
Section~\ref{sec:DiffusionQueues}. In that section, two diffusion processes
will be identified to model such a queue and the coefficients $b$ and $\sigma$
will be mapped out explicitly in terms of primitive data of the queue. The
diffusion models presented in Section~\ref{sec:DiffusionQueues} are rooted in
many-server heavy traffic limit theorems proved in \cite{DaiHeTezcan10} and
\cite{ReedTezcan09}.

A probability distribution $\pi$ on $\mathbb{R}^{d}$ is said to be a
stationary distribution of $X$ if $X(t)$ follows distribution $\pi$ for each
$t>0$ whenever $X(0)$ has distribution $\pi$. Condition (\ref{eq:uelliptic})
is required to ensure the uniqueness of the stationary distribution. See
\cite{DiekerGao11} for more details. In this paper, we assume that $X$ has a
unique stationary distribution $\pi$ and $\pi$ has a density $g$ with respect
to the Lebesgue measure on $\mathbb{R}^{d}$. For a general diffusion process,
there is no explicit solution for $\pi$. This paper develops a numerical
algorithm computing $\pi$. As in \cite{daihar92}, the starting point of the
algorithm is the basic adjoint relationship
\begin{equation}
\int_{\mathbb{R}^{d}}\mathcal{G}f(x)\,\pi(\mathrm{d}x)=0\qquad\text{for all
}f\in C_{b}^{2}(\mathbb{R}^{d}), \label{eq:bar}%
\end{equation}
where $\mathcal{G}$ is the generator of $X$ defined by
\begin{equation}
\mathcal{G}f(x)=\sum_{j=1}^{d}b_{j}(x)\frac{\partial f(x)}{\partial x_{j}%
}+\frac{1}{2}\sum_{j=1}^{d}\sum_{\ell=1}^{d}\Sigma_{j\ell}(x)\frac
{\partial^{2}f(x)}{\partial x_{j}\partial x_{\ell}}\qquad\text{for each }f\in
C_{b}^{2}(\mathbb{R}^{d}) \label{eq:generator}%
\end{equation}
and $\Sigma$ is the covariance matrix given by (\ref{eq:Sigma}). The following
theorem is a consequence of Proposition~9.2 in \cite{ethkur86}.

\begin{theorem}
\label{thm:converse} Let $\pi$ be a probability distribution on $\mathbb{R}%
^{d}$ that satisfies (\ref{eq:bar}). Then, $\pi$ is a stationary distribution
of $X$.
\end{theorem}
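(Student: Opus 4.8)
The plan is to realize the solution $X$ of \eqref{eq:diffusion} as a time-homogeneous Markov process, identify its transition semigroup $\{T(t)\}$, and then appeal to the general principle---made precise in Proposition~9.2 of \cite{ethkur86}---that a probability measure is invariant for a Markov semigroup precisely when it annihilates the generator of that semigroup. The basic adjoint relationship \eqref{eq:bar} is exactly such an annihilation condition, so the real work lies in matching the operator $\mathcal{G}$ of \eqref{eq:generator} on $C_b^2(\mathbb{R}^d)$ with the generator of $\{T(t)\}$.

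First I would verify that $X$ solves the martingale problem for $(\mathcal{G},C_b^2(\mathbb{R}^d))$. Applying It\^o's formula to $f(X(t))$ for $f\in C_b^2(\mathbb{R}^d)$ and using \eqref{eq:diffusion}, one sees that
\begin{equation*}
M_f(t)=f(X(t))-f(X(0))-\int_0^t \mathcal{G}f(X(s))\,\mathrm{d}s
\end{equation*}
is a local martingale, and the Lipschitz bound \eqref{eq:Lipschitz} together with standard moment estimates for the solution upgrades $M_f$ to a genuine martingale. Since \eqref{eq:Lipschitz} guarantees uniqueness in law and \eqref{eq:uelliptic} yields regularity of the transition kernel, $X$ is a well-posed Markov process whose transition semigroup $\{T(t)\}$ has a generator $A$ agreeing with $\mathcal{G}$ on $C_b^2(\mathbb{R}^d)$ (or on a convenient subclass such as $C_c^\infty(\mathbb{R}^d)$).

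Next I would invoke Proposition~9.2 of \cite{ethkur86}, which asserts that $\pi$ is stationary for $\{T(t)\}$ if and only if $\int_{\mathbb{R}^d}Af\,\mathrm{d}\pi=0$ for every $f$ in the domain of $A$. The relationship \eqref{eq:bar} supplies this identity for every $f\in C_b^2(\mathbb{R}^d)$; to obtain it on the full domain one approximates an arbitrary $f$ in the domain of $A$ by a sequence $f_n\in C_b^2(\mathbb{R}^d)$ with $f_n\to f$ and $\mathcal{G}f_n\to Af$ in the appropriate (bounded-pointwise) sense, and passes to the limit in $\int \mathcal{G}f_n\,\mathrm{d}\pi=0$. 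Stationarity for the semigroup is the same as stationarity for $X$, which is the claim; and since uniform ellipticity \eqref{eq:uelliptic} already secures uniqueness of the stationary distribution, the $\pi$ so identified is \emph{the} stationary distribution.

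The main obstacle is the interplay between the non-compactness of $\mathbb{R}^d$ and the unboundedness of $\mathcal{G}f$. Because $b$ may grow linearly and $\Sigma$ quadratically while the derivatives of $f\in C_b^2(\mathbb{R}^d)$ are only bounded, $\mathcal{G}f$ need not be bounded, so $\mathcal{G}$ does not map $C_b^2(\mathbb{R}^d)$ into the Banach space on which a Feller semigroup naturally acts; one must therefore localize and argue that $C_b^2(\mathbb{R}^d)$ forms a core for $A$. The delicate points are (i) ensuring that $\mathcal{G}f$ is $\pi$-integrable---already implicit in the statement of \eqref{eq:bar} and guaranteed by finiteness of the low-order moments of $\pi$---and (ii) justifying the passage $\int \mathcal{G}f_n\,\mathrm{d}\pi\to\int Af\,\mathrm{d}\pi$ by dominated convergence, for which a uniform-integrability bound with respect to $\pi$ is needed. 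These are precisely the technical ingredients packaged by the hypotheses of Proposition~9.2, so once the core property and the integrability estimates are established the conclusion follows.
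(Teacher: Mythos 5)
Your proposal is correct and follows essentially the same route as the paper: the paper offers no argument beyond the single remark that the theorem ``is a consequence of Proposition~9.2 in \cite{ethkur86}'', and that citation is exactly the centerpiece of your proof. The details you supply---It\^o's formula showing $X$ solves the martingale problem for $(\mathcal{G},C_b^2(\mathbb{R}^d))$, well-posedness from the Lipschitz condition (\ref{eq:Lipschitz}), and then the Ethier--Kurtz result converting the annihilation condition (\ref{eq:bar}) into stationarity---are precisely the standard verification that the paper leaves implicit.
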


In this paper, we conjecture that a stronger version of Theorem
\ref{thm:converse} is true.

\begin{conjecture}
\label{conj:signedMeasure} Let $\pi$ be a signed measure on $\mathbb{R}^{d}$
that satisfies (\ref{eq:bar}) and $\pi(\mathbb{R}^{d})=1$. Then, $\pi$ is a
nonnegative measure and consequently it is a stationary distribution of $X$.
\end{conjecture}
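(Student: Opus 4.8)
The plan is to reduce the statement to the already-established Theorem~\ref{thm:converse} by showing that any signed measure $\pi$ with $\pi(\mathbb{R}^d)=1$ satisfying the basic adjoint relationship~(\ref{eq:bar}) must in fact be nonnegative. Once nonnegativity is known, $\pi$ is a probability distribution satisfying~(\ref{eq:bar}), and Theorem~\ref{thm:converse} immediately identifies it as a stationary distribution. The route to nonnegativity passes through two ingredients: first, upgrading the infinitesimal identity~(\ref{eq:bar}) to the integrated invariance $\int_{\mathbb{R}^d}P_tf\,\mathrm{d}\pi=\int_{\mathbb{R}^d}f\,\mathrm{d}\pi$ for the transition semigroup $\{P_t\}$ of $X$; and second, exploiting the strict positivity of the transition density, guaranteed by uniform ellipticity~(\ref{eq:uelliptic}), to rule out a signed invariant measure having both a nontrivial positive part and a nontrivial negative part.

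For the first ingredient I would mimic the resolvent argument underlying Echeverria's theorem (Proposition~9.2 of \cite{ethkur86}), noting that it is entirely linear and hence does not use the sign of $\pi$. For $\lambda>0$ and a bounded input $h$, let $f=R_\lambda h=\int_0^\infty e^{-\lambda t}P_th\,\mathrm{d}t$ solve the resolvent equation $\lambda f-\mathcal{G}f=h$. Applying~(\ref{eq:bar}) to $f$ gives $\lambda\int f\,\mathrm{d}\pi=\int h\,\mathrm{d}\pi$, i.e. $\int_0^\infty e^{-\lambda t}\bigl(\int P_th\,\mathrm{d}\pi\bigr)\mathrm{d}t=\tfrac1\lambda\int h\,\mathrm{d}\pi$. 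The right-hand side is the Laplace transform of the constant $t\mapsto\int h\,\mathrm{d}\pi$, so uniqueness of Laplace transforms together with the continuity of $t\mapsto\int P_th\,\mathrm{d}\pi$ forces $\int P_th\,\mathrm{d}\pi=\int h\,\mathrm{d}\pi$ for all $t\ge0$, that is, $\pi P_t=\pi$.

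For the second ingredient I would use the Hahn--Jordan decomposition $\pi=\pi^+-\pi^-$ into mutually singular nonnegative measures, together with the fact that under~(\ref{eq:uelliptic}) the diffusion is irreducible with a strictly positive transition density $p_t(x,y)>0$ (Aronson-type bounds, or equivalently the support theorem for uniformly elliptic diffusions). If both $\pi^+$ and $\pi^-$ were nonzero, then $\pi^+P_t$ and $\pi^-P_t$ would each possess a strictly positive, everywhere-defined density, hence could not be mutually singular; writing $m$ and $n$ for these densities, the pointwise strict inequality $|m-n|<m+n$ (valid wherever $m,n>0$, hence a.e.) yields the strict contraction $\|\pi P_t\|_{TV}=\int|m-n|<\int(m+n)=\|\pi\|_{TV}$, contradicting the invariance $\pi P_t=\pi$. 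Therefore one of $\pi^+,\pi^-$ vanishes, and since $\pi(\mathbb{R}^d)=1>0$ it must be $\pi^-=0$, so $\pi\ge0$, completing the argument.

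The main obstacle I anticipate lies in justifying the first ingredient when the drift $b$ is unbounded, as it is for the piecewise Ornstein--Uhlenbeck models of interest, where $b$ grows linearly. Then $\mathcal{G}f$ need not be bounded for $f\in C_b^2(\mathbb{R}^d)$, the resolvent $R_\lambda h$ need not lie in $C_b^2(\mathbb{R}^d)$, and $\int\mathcal{G}f\,\mathrm{d}\pi$ requires $\pi$-integrability of an unbounded integrand. Making the Echeverria step rigorous would accordingly demand (i) a priori tail or moment bounds on the signed measure $\pi$, which I would attempt to extract from~(\ref{eq:bar}) by inserting suitable Lyapunov test functions, and (ii) a localization/cutoff approximation of $R_\lambda h$ by genuine $C_b^2$ functions with error terms controlled by those tail bounds. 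Reconciling the restrictive admissible class $C_b^2(\mathbb{R}^d)$ appearing in~(\ref{eq:bar}) with the regularity actually available for the resolvent of an unbounded-drift generator is, I expect, precisely the difficulty that keeps the statement at the level of a conjecture.
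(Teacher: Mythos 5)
This statement is an open conjecture in the paper: the authors give no proof of it, and they explicitly note that its special case for a piecewise OU process was posed as an open problem by \cite{DaiDieker10}. So the only question is whether your argument actually settles it, and it does not --- by your own admission. The structure you set up is the natural one (establish nonnegativity, then invoke Theorem~\ref{thm:converse}), and your second ingredient is sound as far as it goes: if one knew $\pi P_t=\pi$ for the signed measure $\pi$, then strict positivity of the transition density together with the total-variation identity $\int|m-n|\,\mathrm{d}y=\int(m+n)\,\mathrm{d}y-2\int\min(m,n)\,\mathrm{d}y$ applied to the Jordan decomposition would force $\pi^{-}=0$. But everything hinges on the first ingredient, and that is where the genuine gap lies.

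The gap is this: your claim that the argument underlying Proposition~9.2 of \cite{ethkur86} ``is entirely linear and hence does not use the sign of $\pi$'' is not accurate. Echeverria's theorem is not proved by the resolvent manipulation you sketch; its proof constructs a stationary solution of the martingale problem starting from $\pi$, and the nonnegativity of $\pi$ is used essentially (the objects built along the way must be genuine probability transition functions, and tightness/extraction arguments require positive mass). The linear shortcut you propose instead would require applying (\ref{eq:bar}) to $f=R_{\lambda}h$, but (\ref{eq:bar}) is only available for $f\in C_{b}^{2}(\mathbb{R}^{d})$, and $R_{\lambda}h$ is in general not an admissible test function --- precisely the mismatch you flag in your last paragraph. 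For the diffusions this conjecture is meant for, the drift grows linearly, so $\mathcal{G}f$ is unbounded even for $f\in C_{b}^{2}(\mathbb{R}^{d})$, the $\pi$-integrability of $\mathcal{G}f$ against a signed $\pi$ with unknown tails is unclear, and no cutoff scheme approximating $R_{\lambda}h$ by $C_{b}^{2}$ functions with controlled errors is known. Since the passage from (\ref{eq:bar}) to semigroup invariance for a \emph{signed} measure is exactly the open problem, your proposal is a reasonable research program that correctly isolates the obstruction, but it is not a proof.
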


Our algorithm is to construct a function $g$ on $\mathbb{R}^{d}$ such that%
\begin{equation}
\int_{\mathbb{R}^{d}}g(x)\,\mathrm{d}x=1\qquad\text{and}\qquad\int%
_{\mathbb{R}^{d}}\mathcal{G}f(x)g(x)\,\mathrm{d}x=0\qquad\text{for all }f\in
C_{b}^{2}(\mathbb{R}^{d})\text{.} \label{eq:bar_density}%
\end{equation}
Assuming that Conjecture \ref{conj:signedMeasure} is true, $g$ must be the
unique stationary density of $X$. As a special case, the nonnegativity of a
signed measure $\pi$ that satisfies (\ref{eq:bar}) for a piecewise OU process
was proposed as an open problem by \cite{DaiDieker10}. Piecewise OU processes
will be introduced in Section~\ref{sec:PatienceZero}.

\section{A finite element algorithm for stationary distributions}

\label{sec:Algorithm}

In this section, we propose a numerical algorithm computing the stationary
density $g$. The basic algorithm follows the one developed in \cite{daihar92}.
The finite element implementation closely follows \cite{scdd02}.

\subsection{A reference density}

\label{sec:ReferenceRatio}To compute the stationary density $g$, we adopt a
notion called a \emph{reference density} that was first introduced by
\cite{daihar92}. A reference density for $g$ is a function $r$ defined from
$\mathbb{R}^{d}$ to $\mathbb{R}_{+}$ such that
\begin{equation}
\int_{\mathbb{R}^{d}}r(x)\,\mathrm{d}x<\infty\qquad\text{and\qquad}%
\int_{\mathbb{R}^{d}}q^{2}(x)r(x)\,\mathrm{d}x<\infty\text{,} \label{eq:ref}%
\end{equation}
where
\[
q(x)=\frac{g(x)}{r(x)}\qquad\text{for each }x\in\mathbb{R}^{d}%
\]
is called the \emph{ratio function}. Such a function $r$ exists because $r=g$
is a reference density. The reference density controls the convergence of our
algorithm. We will discuss how to choose a reference density for the diffusion
models of a many-server queue in Section~\ref{sec:ReferenceDensity}.

For the rest of Section~\ref{sec:Algorithm}, we assume that a reference
density $r$ satisfying (\ref{eq:ref}) has been determined and remains fixed.
In addition, we assume that%
\begin{equation}
\int_{\mathbb{R}^{d}}b_{j}^{2}(x)r(x)\,\mathrm{d}x<\infty\qquad\text{and}%
\qquad\int_{\mathbb{R}^{d}}\Sigma_{j\ell}^{2}(x)r(x)\,\mathrm{d}%
x<\infty\label{eq:Gassump}%
\end{equation}
for $j,\ell=1,\ldots,d$. Since both $b$ and $\sigma$ are Lipschitz continuous,
condition (\ref{eq:Gassump}) is satisfied if
\begin{equation}
\int_{\mathbb{R}^{d}}\left\vert x\right\vert ^{4}r(x)\,\mathrm{d}x<\infty.
\label{eq:refcond}%
\end{equation}
Let $L^{2}(\mathbb{R}^{d},r)$ be the space of all square-integrable functions
on $\mathbb{R}^{d}$ with respect to the measure that has density $r$, i.e.,%
\[
L^{2}(\mathbb{R}^{d},r)=\Big\{f\in\mathcal{B}(\mathbb{R}^{d}):\int%
_{\mathbb{R}^{d}}f^{2}(x)r(x)\,\mathrm{d}x<\infty\Big\}
\]
where $\mathcal{B}(\mathbb{R}^{d})$ is the set of Borel-measurable functions
on $\mathbb{R}^{d}$. Condition (\ref{eq:ref}) implies that $q\in
L^{2}(\mathbb{R}^{d},r)$. We define an inner product on $L^{2}(\mathbb{R}%
^{d},r)$ by%
\[
\langle f,\hat{f}\rangle=\int_{\mathbb{R}^{d}}f(x)\hat{f}(x)r(x)\,\mathrm{d}%
x\qquad\text{for }f,\hat{f}\in L^{2}(\mathbb{R}^{d},r)\text{.}%
\]
The induced norm is given by%
\begin{equation}
\left\Vert f\right\Vert =\langle f,f\rangle^{1/2}\qquad\text{for each }f\in
L^{2}(\mathbb{R}^{d},r)\text{.} \label{eq:norm}%
\end{equation}
One can check that $L^{2}(\mathbb{R}^{d},r)$ is a Hilbert space and assumption
(\ref{eq:Gassump}) ensures that $\mathcal{G}f\in L^{2}(\mathbb{R}^{d},r)$ for
all $f\in C_{b}^{2}(\mathbb{R}^{d})$. In $L^{2}(\mathbb{R}^{d},r)$, the basic
adjoint relationship in (\ref{eq:bar_density}) is equivalent to
\begin{equation}
\langle\mathcal{G}f,q\rangle=0\qquad\text{ for all }f\in C_{b}^{2}%
(\mathbb{R}^{d})\text{.} \label{eq:barref}%
\end{equation}
With a fixed reference density $r$, we need only compute the ratio function
$q$ by (\ref{eq:barref}). Once $q$ is obtained, we can compute the stationary
density via $g(x)=q(x)r(x)$ for $x\in\mathbb{R}^{d}$.

Let
\begin{equation}
H=\text{the closure of }\{\mathcal{G}f:f\in C_{b}^{2}(\mathbb{R}^{d})\}
\label{eq:H}%
\end{equation}
where the closure is taken in the norm in (\ref{eq:norm}). As a subspace of
$L^{2}(\mathbb{R}^{d},r)$, $H$ is orthogonal to $q$. Let $c$ be a constant
function and $c(x)=1$ for all $x\in\mathbb{R}^{d}$. Clearly, $c\in
L^{2}(\mathbb{R}^{d},r)$ but $c\notin H$ because%
\begin{equation}
\langle c,q\rangle=\int_{\mathbb{R}^{d}}g(x)\,\mathrm{d}x=1.
\label{eq:normalizing}%
\end{equation}
Let%
\begin{equation}
\bar{c}=\argmin_{f\in H}\left\Vert c-f\right\Vert \label{eq:gbar}%
\end{equation}
be the projection of $c$ onto $H$. Then, $c-\bar{c}$ must be orthogonal to
$H$. Assuming that Conjecture \ref{conj:signedMeasure} holds and $X$ has a
unique stationary density $g$, one must have $q=\kappa_{q}(c-\bar{c})$ for
some constant $\kappa_{q}\in\mathbb{R}$. By (\ref{eq:normalizing}), the
normalizing constant $\kappa_{q}$ satisfies%
\[
\kappa_{q}^{-1}=\langle c,c-\bar{c}\rangle=\langle c-\bar{c},c-\bar{c}%
\rangle+\langle\bar{c},c-\bar{c}\rangle=\left\Vert c-\bar{c}\right\Vert ^{2}.
\]
Hence, the ratio function is given by
\begin{equation}
q=\frac{c-\bar{c}}{\left\Vert c-\bar{c}\right\Vert ^{2}}. \label{eq:ratio}%
\end{equation}

\subsection{An approximate stationary density}

To compute $q$ by (\ref{eq:ratio}), we need first compute $\bar{c}$, the
projection of $c$ onto $H$. The space $H$ is linear and infinite-dimensional
(i.e., a basis of $H$ contains infinitely many functions). In general, solving
(\ref{eq:gbar}) in an infinite-dimensional space is impossible. In the
algorithm, we use a finite-dimensional subspace $H_{k}$ to approximate $H$.

Suppose that there exists a sequence of finite-dimensional subspaces
$\{H_{k}:k\in\mathbb{N}\}$ of $H$ such that $H_{k}\rightarrow H$ in
$L^{2}(\mathbb{R}^{d},r)$\ as $k\rightarrow\infty$. Here, $H_{k}\rightarrow H$
in $L^{2}(\mathbb{R}^{d},r)$\ means that for each $f\in H$, there exists a
sequence of functions $\{\varphi_{k}:k\in\mathbb{N}\}$ with $\varphi_{k}\in
H_{k}$ such that $\left\Vert \varphi_{k}-f\right\Vert \rightarrow0$ as
$k\rightarrow\infty$. Let
\[
\bar{c}_{k}=\argmin_{f\in H_{k}}\left\Vert c-f\right\Vert
\]
be the projection of $c$ onto $H_{k}$. By Proposition 7 of \cite{daihar92}, we
have the following approximation result.

\begin{proposition}
\label{prop:Convergence}Assume that Conjecture \ref{conj:signedMeasure} is
true. Then,%
\[
\left\Vert q_{k}-q\right\Vert \rightarrow0\qquad\text{as }k\rightarrow
\infty\text{,}%
\]
where $q_{k}=(c-\bar{c}_{k})/\left\Vert c-\bar{c}_{k}\right\Vert ^{2}$.
Furthermore, when the reference density $r$ is bounded,
\[
\int_{\mathbb{R}^{d}}(g_{k}(x)-g(x))^{2}\,\mathrm{d}x\rightarrow
0\qquad\text{as }k\rightarrow\infty\text{,}%
\]
where $g_{k}(x)=q_{k}(x)r(x)$ for each $x\in\mathbb{R}^{d}$.
\end{proposition}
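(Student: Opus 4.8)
The plan is to reduce the entire statement to the single analytic fact that $\bar{c}_k\to\bar{c}$ in $L^2(\mathbb{R}^d,r)$ and then let continuity of the normalizing map finish the job. Under Conjecture~\ref{conj:signedMeasure} the true ratio function has the closed form $q=(c-\bar{c})/\|c-\bar{c}\|^2$ recorded in~(\ref{eq:ratio}), and the approximants $q_k=(c-\bar{c}_k)/\|c-\bar{c}_k\|^2$ are built by the same recipe applied to $\bar{c}_k$. Because $c\notin H$ --- indeed $\langle c,q\rangle=1\neq0$ by~(\ref{eq:normalizing}) while $q\perp H$, so $c$ cannot lie in $H$ --- we have $\|c-\bar{c}\|>0$, and the map $u\mapsto u/\|u\|^2$ is continuous on $\{u:\|u\|>0\}$. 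Hence once $\bar{c}_k\to\bar{c}$ is established, $\|q_k-q\|\to0$ is immediate.

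To prove $\bar{c}_k\to\bar{c}$ I would squeeze $\|c-\bar{c}_k\|$ between two bounds. Since each $H_k$ is a closed (finite-dimensional) subspace of $H$ and $\bar{c}$ is the best approximation of $c$ over all of $H$, the lower bound $\|c-\bar{c}_k\|\ge\|c-\bar{c}\|$ is automatic. For the matching upper bound I would use $H_k\to H$: since $\bar{c}\in H$ there exist $\varphi_k\in H_k$ with $\|\varphi_k-\bar{c}\|\to0$, and then $\|c-\bar{c}_k\|\le\|c-\varphi_k\|\le\|c-\bar{c}\|+\|\bar{c}-\varphi_k\|\to\|c-\bar{c}\|$, so $\|c-\bar{c}_k\|\to\|c-\bar{c}\|$. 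Now orthogonality does the rest: $c-\bar{c}\perp H$ and $\bar{c}-\bar{c}_k\in H$, so Pythagoras gives
\begin{equation*}
\|c-\bar{c}_k\|^2=\|c-\bar{c}\|^2+\|\bar{c}-\bar{c}_k\|^2,
\end{equation*}
whence $\|\bar{c}-\bar{c}_k\|^2=\|c-\bar{c}_k\|^2-\|c-\bar{c}\|^2\to0$. This is the technical heart of the argument, but it is short and entirely routine.

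Finally, combining $c-\bar{c}_k\to c-\bar{c}$ with $\|c-\bar{c}_k\|^2\to\|c-\bar{c}\|^2>0$ yields $q_k\to q$ in $L^2(\mathbb{R}^d,r)$, which is the first claim. For the second, when $r$ is bounded, say $r(x)\le M$ for all $x$, I would dominate the Lebesgue $L^2$ error by the weighted one:
\begin{equation*}
\int_{\mathbb{R}^d}(g_k-g)^2\,\mathrm{d}x=\int_{\mathbb{R}^d}(q_k-q)^2r^2\,\mathrm{d}x\le M\int_{\mathbb{R}^d}(q_k-q)^2r\,\mathrm{d}x=M\|q_k-q\|^2\to0.
\end{equation*}
The genuine obstacle here is conceptual rather than analytic: the whole argument presupposes that $H^{\perp}$ is one-dimensional and spanned by $q$, which is exactly what Conjecture~\ref{conj:signedMeasure} supplies, since uniqueness of the signed-measure solution of the basic adjoint relationship forces $q=\kappa_q(c-\bar{c})$. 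Without it, $\bar{c}_k\to\bar{c}$ and $q_k\to(c-\bar{c})/\|c-\bar{c}\|^2$ would still hold, but the limit need not coincide with the stationary ratio $g/r$. Everything else is standard Hilbert-space geometry.
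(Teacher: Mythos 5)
Your proof is correct. Note that the paper does not actually prove this proposition in its text---it simply invokes Proposition 7 of \cite{daihar92}---so your argument is a self-contained substitute for that citation, and it is exactly the standard one underlying it: the squeeze $\|c-\bar{c}\|\leq\|c-\bar{c}_k\|\leq\|c-\varphi_k\|$ giving $\|c-\bar{c}_k\|\to\|c-\bar{c}\|$, Pythagoras (valid since $c-\bar{c}\perp H$ and $\bar{c}-\bar{c}_k\in H$, using $H_k\subset H$) giving $\bar{c}_k\to\bar{c}$, continuity of $u\mapsto u/\|u\|^2$ away from the origin (with $\|c-\bar{c}\|>0$ because $\langle c,q\rangle=1$), and the pointwise bound $r^2\leq Mr$ for the unweighted $L^2$ claim. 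Your closing observation about where Conjecture~\ref{conj:signedMeasure} enters is also the right one: the projection argument alone only shows $q_k\to(c-\bar{c})/\|c-\bar{c}\|^2$, and it is the conjecture (via the identity (\ref{eq:ratio}), equivalently the fact that it forces $H^{\perp}$ to be spanned by $q$) that identifies this limit with the true ratio function $g/r$.
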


As in \cite{daihar92}, we choose
\begin{equation}
H_{k}=\{\mathcal{G}f:f\in C_{k}\} \label{eq:Hk}%
\end{equation}
for some finite-dimensional space $C_{k}$. In Section~\ref{sec:FEM}, we will
discuss how to construct $C_{k}$ using a finite element method. For notational
convenience, we omit the subscript $k$ when $k$ is fixed. The
finite-dimensional functional space is thus denoted by $C$. Let $m_{C}$ be the
dimension of $C$ and $\{f_{i}:i=1,\ldots,m_{C}\}$ be a basis of $C$. We assume
that the family $\{\mathcal{G}f_{i}:i=1,\ldots,m_{C}\}$ is linearly
independent in $L^{2}(\mathbb{R}^{d},r)$. Then,%
\begin{equation}
\bar{c}_{k}=\sum_{i=1}^{m_{C}}u_{i}\mathcal{G}f_{i}\qquad\text{for some }%
u_{i}\in\mathbb{R}\text{ and }i=1,\ldots,m_{C}\text{.} \label{eq:gn}%
\end{equation}
Using the fact $\langle\mathcal{G}f_{i},c-\bar{c}_{k}\rangle=0$ for
$i=1,\ldots,m_{C}$, we obtain a system of linear equations%
\begin{equation}
Au=v \label{eq:equation}%
\end{equation}
where
\begin{equation}
A_{i\ell}=\langle\mathcal{G}f_{i},\mathcal{G}f_{\ell}\rangle,\qquad
u=(u_{1},\ldots,u_{m_{C}})^{\prime},\qquad v_{i}=\langle\mathcal{G}%
f_{i},c\rangle. \label{eq:coeff}%
\end{equation}
By the linear independence assumption, the $m_{C}\times m_{C}$\ matrix $A$ is
positive definite. Thus, $u=A^{-1}v$ is the unique solution to
(\ref{eq:equation}). Once the vector $u$ is obtained, we can compute the
projection $\bar{c}_{k}$ by (\ref{eq:gn}). Finally, the stationary density $g$
can be approximated via%
\[
g(x)\approx g_{k}(x)=r(x)\frac{c(x)-\bar{c}_{k}(x)}{\left\Vert c-\bar{c}%
_{k}\right\Vert ^{2}}\qquad\text{for each }x\in\mathbb{R}^{d}\text{.}%
\]

\subsection{A finite element method}

\label{sec:FEM}

In \cite{daihar92}, the authors employed multinominals of orders up to $k$ to
construct the space $C_{k}$. This choice appears to be numerically unstable.
The approximation error is significant when $k$ is small, say, $k\leq5$. As
$k$ increases, the round-off error in solving (\ref{eq:equation}) increases
and ultimately dominates the approximation error. Although their
implementation produces accurate estimates for the stationary means of SRBMs,
it sometimes produces poor estimates for the stationary distributions. In this
section, we construct a sequence of spaces $\{C_{k}:k\in\mathbb{N}\}$ using
the finite element method as in \cite{scdd02}. Because the state space in
\cite{scdd02} is bounded, neither a reference density nor state space
truncation is used there.

The state space of $X$ is unbounded in our setting. It is necessary to
truncate the state space to apply the finite element method. Let $\{K_{k}%
:k\in\mathbb{N}\}$ be a sequence of compact sets in $\mathbb{R}^{d}$. For each
$f\in C_{k}$, we assume that $f(x)=0$ for $x\in\mathbb{R}^{d}\setminus K_{k}$.
The subscript $k$ is omitted again when it is fixed and we use $K$ to denote
the compact support of the space $C$. In our implementation, we restrict $K$
to be a $d$-dimensional hypercube%
\begin{equation}
K=[-\zeta_{1},\xi_{1}]\times\cdots\times\lbrack-\zeta_{d},\xi_{d}],
\label{eq:K}%
\end{equation}
where both $\zeta_{j}$ and $\xi_{j}$ are positive constants for $j=1,\ldots,d$.

We partition $K$ into a finite number of subdomains. Such a partition is
called a \emph{mesh} and each subdomain is called a \emph{finite element}.
Since $K$ is a hypercube, it is natural to use a lattice mesh, where each
finite element is again a hypercube. In this case, each corner point of a
finite element is called a \emph{node}. In dimension $j=1,\ldots,d$, we divide
the interval $[-\zeta_{j},\xi_{j}]$ into $n_{j}$ subintervals by partition
points
\[
-\zeta_{j}=y_{j}^{0}<y_{j}^{1}<\cdots<y_{j}^{n_{j}}=\xi_{j}.
\]
Then, $K$ is divided into $\prod_{j=1}^{d}n_{j}$ finite elements. For future
reference, we label the nodes in the way that node $(i_{1},\ldots,i_{d})$
corresponds to spatial coordinate $(y_{1}^{i_{1}},\ldots,y_{d}^{i_{d}})$, and
define%
\[
h_{j}^{\ell}=y_{j}^{\ell+1}-y_{j}^{\ell}\qquad\text{for }\ell=0,\ldots
,n_{j}-1\text{ and }j=1,\ldots,d\text{.}%
\]
If $\Delta$ denotes such a mesh, we define
\[
\left\vert \Delta\right\vert =\max\{h_{j}^{\ell}:\ell=0,\ldots,n_{j}%
-1;\,j=1,\ldots,d\}
\]
and
\begin{equation}
\eta_{\Delta}=\max\bigg\{\frac{h_{j_{1}}^{\ell_{1}}}{h_{j_{2}}^{\ell_{2}}%
}:\ell_{1},\ell_{2}=0,\ldots,n_{j}-1;\,j_{1},j_{2}=1,\ldots,d;\,\,j_{1}\neq
j_{2}\bigg\}. \label{eq:eta}%
\end{equation}

The finite-dimensional space $C$ is generated using the above mesh. We use the
cubic Hermite basis functions to construct a basis of $C$, as in
\cite{scdd02}. The one-dimensional Hermite basis functions for $-1\leq z\leq1$
are given by%
\begin{equation}
\phi(z)=(\left\vert z\right\vert -1)^{2}(2\left\vert z\right\vert
+1)\qquad\text{and}\qquad\psi(z)=z(\left\vert z\right\vert -1)^{2}\text{.}
\label{eq:Hermit}%
\end{equation}
In dimension $j=1,\ldots,d$ and for $\ell=1,\ldots,n_{j}-1$, let%
\[
\phi_{j}^{\ell}(z)=%
\begin{cases}
\phi\bigg(\dfrac{z-y_{j}^{\ell}}{h_{j}^{\ell-1}}\bigg) & \text{if }y_{j}%
^{\ell-1}\leq z\leq y_{j}^{\ell}\text{,}\\
\phi\bigg(\dfrac{z-y_{j}^{\ell}}{h_{j}^{\ell}}\bigg) & \text{if }y_{j}^{\ell
}\leq z\leq y_{j}^{\ell+1}\text{,}\\
0 & \text{otherwise}%
\end{cases}
\]
and%
\[
\psi_{j}^{\ell}(z)=%
\begin{cases}
h_{j}^{\ell-1}\psi\bigg(\dfrac{z-y_{j}^{\ell}}{h_{j}^{\ell-1}}\bigg) &
\text{if }y_{j}^{\ell-1}\leq z\leq y_{j}^{\ell}\text{,}\\
h_{j}^{\ell}\psi\bigg(\dfrac{z-y_{j}^{\ell}}{h_{j}^{\ell}}\bigg) & \text{if
}y_{j}^{\ell}\leq z\leq y_{j}^{\ell+1}\text{,}\\
0 & \text{otherwise.}%
\end{cases}
\]
Let $x=(x_{1},\ldots,x_{d})^{\prime}$ be a vector in $K$. At node
$(i_{1},\ldots,i_{d})$, the basis functions of $C$ are the tensor-product
Hermite basis functions%
\begin{equation}
f_{i_{1},\ldots,i_{d},\chi_{1},\ldots,\chi_{d}}(x)=\prod_{j=1}^{d}%
g_{i_{j},\chi_{j}}(x_{j}) \label{eq:basisFunctions}%
\end{equation}
where $\chi_{j}$ is either $0$ or $1$ and
\[
g_{i_{j},\chi_{j}}(z)=%
\begin{cases}
\phi_{j}^{i_{j}}(z) & \text{if }\chi_{j}=0\text{,}\\
\psi_{j}^{i_{j}}(z) & \text{if }\chi_{j}=1\text{.}%
\end{cases}
\]
Therefore, each node has $2^{d}$ tensor-product basis functions and the space
$C$ has a total of
\begin{equation}
m_{C}=2^{d}\prod_{j=1}^{d}(n_{j}-1) \label{eq:Nn}%
\end{equation}
basis functions.

The space $C$ is not a subspace of $C_{b}^{2}(\mathbb{R}^{d})$. For the
one-dimensional Hermite basis functions in (\ref{eq:Hermit}), the second order
derivative of $\phi(z)$\ is not defined at $z=-1$ and $1$, and the second
order derivative of $\psi(z)$ is not defined at $z=-1$, $0$, and $1$. As a
consequence, there exists $f\in C$\ for which $\mathcal{G}f$ is not defined on
the boundaries of certain finite elements. Because such boundaries have
Lebesgue measure zero in $\mathbb{R}^{d}$, for each $f\in C$, we can find a
sequence of functions $\{\varphi_{i}:i\in\mathbb{N}\}$ in $C_{b}%
^{2}(\mathbb{R}^{d})$ such that $\left\Vert \mathcal{G}\varphi_{i}%
-\mathcal{G}f\right\Vert \rightarrow0$ as $i\rightarrow\infty$. Hence,
$H_{k}\subset H$ still holds for each $k$.

For the linear system (\ref{eq:equation}) to have a unique solution, the
family of functions
\[
\{\mathcal{G}f_{i_{1},\ldots,i_{d},\chi_{1},\ldots,\chi_{d}}:i_{j}%
=1,\ldots,n_{j}-1;\chi_{j}=0,1;j=1,\ldots d\}
\]
must be linearly independent in $L^{2}(\mathbb{R}^{d},r)$. The following
proposition provides sufficient conditions for the linear independence. Its
proof can be found in the appendix.

\begin{proposition}
\label{prop:independence} Let $\mathcal{G}$ be the generator of $X$ in
(\ref{eq:generator}) such that conditions (\ref{eq:Lipschitz}) and
(\ref{eq:uelliptic}) holds and all entries of $\Sigma$ are continuously
differentiable. Assume that $r(x)>0$ for all $x\in\mathbb{R}^{d}$. Then, the
family of functions
\[
\{\mathcal{G}f_{i_{1},\ldots,i_{d},\chi_{1},\ldots,\chi_{d}}:i_{j}%
=1,\ldots,n_{j}-1;\chi_{j}=0,1;j=1,\ldots d\}
\]
is linearly independent in $L^{2}(\mathbb{R}^{d},r)$, where $f_{i_{1}%
,\ldots,i_{d},\chi_{1},\ldots,\chi_{d}}$ is the basis function of $C$ given by
(\ref{eq:basisFunctions}). Consequently, the solution to the linear system
(\ref{eq:equation}) is unique.
\end{proposition}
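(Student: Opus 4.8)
The plan is to recast the asserted linear independence as a uniqueness statement for the operator $\mathcal{G}$ and then close it with an elliptic maximum principle. Since $\mathcal{G}$ is linear, any relation $\sum_{\alpha} c_{\alpha}\,\mathcal{G}f_{\alpha}=0$ in $L^{2}(\mathbb{R}^{d},r)$ (with the multi-index $\alpha=(i_{1},\ldots,i_{d},\chi_{1},\ldots,\chi_{d})$) reads $\mathcal{G}f=0$ in $L^{2}(\mathbb{R}^{d},r)$ for $f=\sum_{\alpha}c_{\alpha}f_{\alpha}\in C$. Because $r(x)>0$ for every $x$, this is the same as $\mathcal{G}f=0$ Lebesgue-almost everywhere. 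As the functions $f_{\alpha}$ in (\ref{eq:basisFunctions}) form a basis of $C$ and are in particular linearly independent, it suffices to prove the implication: if $f\in C$ and $\mathcal{G}f=0$ a.e., then $f\equiv 0$. This forces all $c_{\alpha}=0$, which yields both the stated independence and, via (\ref{eq:coeff}), the uniqueness of the solution of (\ref{eq:equation}).

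First I would record the regularity of an arbitrary $f\in C$. The tensor-product cubic Hermite functions are globally $C^{1}$ --- the one-dimensional factors $\phi$ and $\psi$ in (\ref{eq:Hermit}) have continuous first derivatives and only fail to be twice differentiable at the nodes --- and on each open finite element $f$ is a polynomial. Hence $\nabla f$ is continuous and piecewise polynomial, so it is Lipschitz on $K$ and $f\in W^{2,\infty}(K)$, with its distributional second derivatives equal almost everywhere to the classical ones computed inside each element; crucially, no singular interface terms appear across element faces precisely because $f$ is $C^{1}$. Moreover, each basis function is attached to an interior node $i_{j}\in\{1,\ldots,n_{j}-1\}$, so it vanishes together with its first derivatives on $\partial K$; thus $f$ and $\nabla f$ vanish on $\partial K$ and $f$ extends by $0$ to a compactly supported $C^{1}$ function. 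Consequently the a.e. identity $\mathcal{G}f=0$ is a genuine strong equation for $f\in W^{2,\infty}(K)\subset W^{2,d}_{\mathrm{loc}}(K)$, where the drift and diffusion coefficients are bounded on the compact set $K$ (by (\ref{eq:Lipschitz}) and the assumed continuity of the entries of $\Sigma$) and uniformly elliptic there by (\ref{eq:uelliptic}).

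It then remains to invoke the weak maximum principle for nondivergence-form elliptic operators acting on strong ($W^{2,d}$) solutions, as in Chapter~9 of Gilbarg and Trudinger. Since $\mathcal{G}$ in (\ref{eq:generator}) has no zeroth-order term, both $f$ and $-f$ are subsolutions of $\mathcal{G}u=0$ on the interior of $K$, so $\sup_{K}f\le\sup_{\partial K}f=0$ and $\sup_{K}(-f)\le\sup_{\partial K}(-f)=0$; hence $f\equiv 0$ on $K$, and therefore on all of $\mathbb{R}^{d}$. I expect the main obstacle to be the junction between finite elements: $f$ is only $C^{1}$, not $C^{2}$, so $\mathcal{G}f=0$ holds classically only in the interior of each element, and one must justify that a global maximum principle is legitimate. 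The $C^{1}$ continuity of the Hermite elements is exactly what removes this difficulty, by upgrading the piecewise identity to a strong solution on all of $K$. An alternative, more hands-on route would propagate the maximum across each shared face by applying Hopf's boundary-point lemma from both adjacent elements and using the continuity of $\nabla f$ to reach a contradiction; but this is delicate at the edges and corners of the cubic elements, where the interior-sphere condition fails, which is why I favor the Sobolev formulation above.
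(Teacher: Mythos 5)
Your proof is correct, and it takes a genuinely different route through the elliptic theory than the paper does. The skeleton is the same: by linearity of $\mathcal{G}$ and positivity of $r$, the claim reduces to showing that $f\in C$ and $\mathcal{G}f=0$ Lebesgue-a.e.\ force $f\equiv 0$, which is then closed by a uniqueness theorem for the Dirichlet problem on $K$ with zero boundary data. The paper, however, first rewrites $\mathcal{G}$ in divergence form (this is exactly where the hypothesis that the entries of $\Sigma$ are continuously differentiable gets consumed), interprets $\mathcal{G}f_{0}=0$ in the weak $W^{1,2}$ sense on a bounded domain containing $K$, and invokes Corollary~8.2 of \cite{GilbargTrudinger01}, using only that $f_{0}\in W_{0}^{1,2}(K)$. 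You instead keep the nondivergence form, upgrade the piecewise identity to a strong equation by observing that the $C^{1}$ tensor-product Hermite elements lie in $W^{2,\infty}(K)$ (so distributional and classical second derivatives agree a.e.\ and no interface terms arise), and then apply the weak maximum principle for strong $W^{2,d}_{\mathrm{loc}}\cap C^{0}(K)$ solutions (Theorem~9.1 of \cite{GilbargTrudinger01}) to $f$ and $-f$. Your route buys two things: it never differentiates $\Sigma$, so it actually proves the proposition under weaker hypotheses (continuity of $\Sigma$, which already follows from (\ref{eq:Lipschitz}), suffices); and it confronts explicitly the inter-element junction issue that the paper glosses over when it asserts the weak formulation---the cancellation of surface terms in the element-by-element integration by parts implicit in the paper's argument rests on precisely the $C^{1}$ continuity you isolate. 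The paper's route, in exchange, needs only first-order Sobolev regularity of the finite element space, so it is less tied to the specific $C^{1}$ cubic Hermite basis. Both arguments then conclude uniqueness of the solution to (\ref{eq:equation}) from positive definiteness of the Gram matrix $A$.
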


Now let us consider a sequence of functional spaces $\{C_{k}:k\in\mathbb{N}%
\}$. Let $\Delta_{k}$ be the mesh for constructing $C_{k}$. We assume that the
mesh $\Delta_{k+1}$ is a refinement of $\Delta_{k}$, i.e., a node or an
interelement boundary in $\Delta_{k}$ is also a node or an interelement
boundary in $\Delta_{k+1}$. We further assume that such refinements are
\emph{regular}, i.e., for each $\eta_{\Delta_{k}}$ defined in (\ref{eq:eta}),
the set $\{\eta_{\Delta_{k}}:k\in\mathbb{N}\}$ is bounded. The next
proposition, along with Proposition \ref{prop:Convergence}, justifies the
proposed algorithm for computing the stationary distribution. We leave the
proof of Proposition \ref{prop:Hn} to the appendix, too.

\begin{proposition}
\label{prop:Hn} Let $\{\Delta_{k}:k\in\mathbb{N}\}$ be a sequence of lattice
meshes such that each $\Delta_{k+1}$ is a refinement of $\Delta_{k}$ and the
refinements are regular. Let $K_{k}$ be the $d$-dimensional finite hypercube
that is the domain of $\Delta_{k}$, and $C_{k}$ be the functional space
generated by $\Delta_{k}$ using the tensor-product Hermite basis functions in
(\ref{eq:basisFunctions}). Let $H$ be the infinite-dimensional space in
(\ref{eq:H}) and $H_{k}$ be the finite-dimensional space in (\ref{eq:Hk}),
where the generator $\mathcal{G}$\ satisfies (\ref{eq:Lipschitz}) and
(\ref{eq:Gassump}). Assume that
\[
\left\vert \Delta_{k}\right\vert \rightarrow0\qquad\text{and}\qquad
K_{k}\uparrow\mathbb{R}^{d}\qquad\text{as }k\rightarrow\infty\text{.}%
\]
Then,%
\[
H_{k}\rightarrow H\qquad\text{as }k\rightarrow\infty\text{.}%
\]

\end{proposition}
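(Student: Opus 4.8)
The plan is to verify the approximation property directly: for every $f$ in a dense subset of $H$ I will exhibit $\varphi_k\in C_k$ with $\|\mathcal{G}\varphi_k-\mathcal{G}f\|\to 0$, where $\|\cdot\|$ is the norm in (\ref{eq:norm}). Since $H_k\subseteq H$ (already noted) and the nesting of the meshes gives $C_k\subseteq C_{k+1}$, hence $H_k\subseteq H_{k+1}$, it is enough to approximate $\mathcal{G}f$ for $f$ ranging over a class whose image under $\mathcal{G}$ is dense in $H$. First I would shrink that class from $C_b^2(\mathbb{R}^d)$ to $C_c^\infty(\mathbb{R}^d)$. To pass to compact support, multiply by a cutoff $\chi_n\in C_c^\infty$ with $\chi_n\equiv 1$ on $\{|x|\le n\}$, $\operatorname{supp}\chi_n\subseteq\{|x|\le 2n\}$, $|\nabla\chi_n|=O(1/n)$ and second derivatives $O(1/n^2)$; expanding $\mathcal{G}(\chi_n f)-\chi_n\mathcal{G}f$ by the product rule leaves terms supported on the annulus $\{n\le|x|\le 2n\}$ that are bounded pointwise by $\|f\|_\infty,\|\nabla f\|_\infty$ times $|b_j|$ or $|\Sigma_{j\ell}|$, so their $\|\cdot\|$-norms tend to $0$ by (\ref{eq:Gassump}) and dominated convergence, while $\|(\chi_n-1)\mathcal{G}f\|\to 0$ because $\mathcal{G}f\in L^2(\mathbb{R}^d,r)$. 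The cutoff $\chi_n f$ lies in $C_c^2$; passing from $C_c^2$ to $C_c^\infty$ is then a routine mollification, the error being measured on a fixed compact set where $b$ and $\Sigma$ are continuous (by (\ref{eq:Lipschitz})) hence bounded and where $r$ has finite mass.

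Fix $f\in C_c^\infty(\mathbb{R}^d)$ with $\operatorname{supp}f\subseteq B_R$. For every $k$ large enough that $\overline{B_{R+1}}\subseteq\operatorname{int}K_k$, let $\varphi_k$ be the tensor-product cubic Hermite interpolant of $f$ on $\Delta_k$. Because $f$ together with all its first-order-in-each-coordinate derivatives vanishes outside $B_R$, every nodal degree of freedom attached to a boundary node of $K_k$ is zero; therefore $\varphi_k$ is a combination of the interior basis functions (\ref{eq:basisFunctions}) only, so $\varphi_k\in C_k$ and $\mathcal{G}\varphi_k\in H_k$. Moreover $\varphi_k-f$ is supported in the fixed compact set $\bar K:=\overline{B_{R+1}}$ for all large $k$.

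The crux is the interpolation estimate. Writing the tensor-product interpolant as a composition of the one-dimensional cubic Hermite operators and telescoping over the coordinates, I would combine the classical one-dimensional bounds $\|(g-I_h g)^{(p)}\|_\infty\le Ch^{4-p}\|g^{(4)}\|_\infty$ for $p=0,1,2$ with the uniform boundedness of the one-dimensional interpolation operators to obtain
\[
\max_{|\alpha|\le 2}\ \|\partial^\alpha(\varphi_k-f)\|_{L^\infty(\bar K)}\le C\,|\Delta_k|^{2}\longrightarrow 0,
\]
where $C$ depends only on the fourth-order derivatives of $f$ and on $\sup_k\eta_{\Delta_k}$, which is finite precisely because the refinements are regular; the fineness $|\Delta_k|\to 0$ then drives the bound to $0$. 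Since $\mathcal{G}(\varphi_k-f)=\sum_{j}b_j\,\partial_j(\varphi_k-f)+\tfrac12\sum_{j,\ell}\Sigma_{j\ell}\,\partial^2_{j\ell}(\varphi_k-f)$ is supported in $\bar K$, where $b$ and $\Sigma$ are bounded and $\int_{\bar K}r\,\mathrm{d}x<\infty$, the pointwise bound gives $\|\mathcal{G}\varphi_k-\mathcal{G}f\|^2\le C'\big(\max_{|\alpha|\le 2}\|\partial^\alpha(\varphi_k-f)\|_{L^\infty(\bar K)}\big)^2\to 0$, which is what is needed.

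I expect the interpolation step to be the main obstacle, for two reasons. The second derivatives of the piecewise-cubic interpolant exist only in the interior of each finite element and jump across interelement boundaries, so the $L^\infty$ bound must be read element by element; this is harmless for the integral $\|\mathcal{G}\varphi_k-\mathcal{G}f\|^2$ since the union of element boundaries has Lebesgue measure zero, as the text already observes for members of $C$. More seriously, for the mixed second derivatives $\partial^2_{j\ell}$ with $j\ne\ell$ the telescoping produces products of one-dimensional operators acting in different coordinates, and it is here that strongly anisotropic elements could inflate the constants; keeping $C$ uniform in $k$ is exactly what the regularity hypothesis on the refinements (boundedness of $\eta_{\Delta_k}$ in (\ref{eq:eta})) buys. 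Once these uniform bounds are in place, the density reductions and the final $L^2(\mathbb{R}^d,r)$ estimate are routine.
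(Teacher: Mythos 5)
Your proof is correct, and at the top level it follows the same strategy as the paper's: approximate the test function by a smooth function, interpolate that function with the tensor-product cubic Hermite elements, and invoke regularity of the refinements (boundedness of $\eta_{\Delta_k}$) to keep the interpolation constant uniform in $k$. The two key steps are implemented genuinely differently, though. Where you prove density of $\{\mathcal{G}f : f\in C_c^\infty(\mathbb{R}^d)\}$ in $H$ by cutoffs and mollification (using (\ref{eq:Gassump}) and dominated convergence), the paper keeps $f_0\in C_b^2(\mathbb{R}^d)$ as is, splits off the tail $\int_{\mathbb{R}^d\setminus K_a}(\mathcal{G}f_0(x))^2r(x)\,\mathrm{d}x$ (small for large $a$, again by (\ref{eq:Gassump})), and approximates $f_0$ on $K_a$ by a polynomial in an $H^2$-type weighted norm, citing Proposition~7.1 in the appendix of \cite{ethkur86}; the interpolant is then defined to vanish outside $K_a$, which is how the tail term re-enters. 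Where you derive element-wise $L^\infty$ interpolation bounds by telescoping one-dimensional Hermite operators --- correctly locating the role of the aspect-ratio bound in the second-derivative terms --- the paper cites Theorem~6.6 of \cite{OdenReddy76} for a weighted $H^2(K_a)$ estimate with constant of the form $\kappa_1^2\kappa_2\kappa_3\vert\Delta_k\vert^2$, where $\kappa_1$ bounds $\eta_{\Delta_k}$ and $\kappa_3$ involves the fourth derivatives of the polynomial. Your route buys one real advantage: since your target function is supported strictly inside $K_k$, the Hermite interpolant matches it on every element, including those near $\partial K_k$; in the paper's construction $\varphi_k$ must vanish at nodes on $\partial K_a$ while the polynomial $f_p$ does not, so the cited estimate does not literally cover the elements touching $\partial K_a$ --- your compact-support reduction sidesteps this boundary mismatch entirely. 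The price is that you must establish the tensor-product bounds by hand, and there your constant in fact involves mixed derivatives of $f$ of order up to roughly six (e.g.\ terms like $\partial_{j_1}^2\partial_{j_2}^4 f$, with a factor $h_{j_2}^4/h_{j_1}$ that is exactly where $\eta_{\Delta_k}$ is needed), not only fourth-order ones; this is harmless for $f\in C_c^\infty(\mathbb{R}^d)$ but should be stated precisely.
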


\section{Diffusion models for many-server queues}

\label{sec:DiffusionQueues}

In this section, we introduce $GI/Ph/n+GI$ queues and present two diffusion
models for such a queue with many servers. The two models differ in how the
patience time distribution is built into them. The patience time density at
zero is used in the first model, whereas the entire patience time distribution
is used in the second model.

\subsection{$GI/Ph/n+GI$ queues in the QED regime}

\label{sec:Queues}

We focus on a queue with many servers working in the QED regime. The QED
regime will be discussed shortly. In this queue, the service time distribution
is restricted to be phase-type. All positive-valued distributions can be
approximated by phase-type distributions.

Let $p$ be a $d$-dimensional nonnegative vector whose entries sum to one,
$\nu$ be a $d$-dimensional positive vector, and $P$ be a $d\times d$
sub-stochastic matrix. We assume that the diagonal entries of $P$ are zero and
$P$ is transient, namely, $I-P$ is invertible. Consider a continuous-time
Markov chain with $d+1$ phases (or states) where phases $1,\ldots,d$ are
transient and phase $d+1$ is absorbing. For $j=1,\ldots,d$, the Markov chain
starts in phase $j$ with probability $p_{j}$. The amount of time it stays in
phase $j$ is exponentially distributed with mean $1/\nu_{j}$. When it leaves
phase $j$, the Markov chain enters phase $\ell=1,\ldots,d$ with probability
$P_{j\ell}$ or enters phase $d+1$ with probability $1-\sum_{\ell=1}%
^{d}P_{j\ell}$. The \emph{phase-type distribution} with parameters $(p,\nu,P)$
is the distribution of time from starting until absorption in phase $d+1$ for
the above Markov chain. In particular, when $P$ is a zero matrix, the
associated phase-type distribution is a \emph{hyperexponential distribution}
with $d$ phases.

In a $GI/Ph/n+GI$ queue, there are $n$ identical servers working in parallel.
The customer arrival process is a renewal process. Upon arrival, a customer
enters service immediately if an idle server is available. Otherwise, he waits
in a buffer with infinite waiting room that holds a first-in-first-out (FIFO)
queue. The service times form a sequence of iid random variables, following a
phase-type distribution. When a server finishes serving a customer, the server
takes the leading customer from the waiting buffer. When the buffer is empty,
the server begins to idle. Each customer has a patience time. The patience
times are iid following a general distribution. When a customer's waiting time
in queue exceeds his patience time, the customer abandons the system with no service.

Let $\lambda$ be the arrival rate and $1/\mu$ be the mean service time. The
system is assumed to operate in the QED regime, i.e., both the arrival rate
$\lambda$ and the number of servers $n$ are large, while the traffic intensity
$\rho=\lambda/(n\mu)$ is close to one. Because customer abandonment is
allowed, it is not necessary to assume $\rho<1$ for the system to reach a
steady state. For future purposes, we put
\begin{equation}
\beta=\sqrt{n}(1-\rho). \label{eq:beta}%
\end{equation}

Assume that the phase-type service time distribution has parameters
$(p,\nu,P)$. Each service time can be decomposed into a number of phases. When
a customer is in service, he must be in one of the $d$ phases. Let $Z_{j}(t)$
denote the number of customers in phase $j$ service at time $t$. In
steady-state, one expects that the customers in service are distributed among
the $d$ phases following a distribution $\gamma$, given by
\begin{equation}
\gamma=\mu R^{-1}p\qquad\text{and}\qquad R=(I-P^{\prime})\diag(\nu)\text{.}
\label{eq:R}%
\end{equation}
One can check that $\sum_{j=1}^{d}\gamma_{j}=1$ and $\gamma_{j}$ is
interpreted to be the fraction of phase $j$ service load on the $n$ servers.

Suppose that all customers, including those initial customers waiting in the
buffer at time zero, sample their first service phases following distribution
$p$ upon arrival. One can stratify customers in the waiting buffer according
to their first service phases. For $j=1,\ldots,d$, we use $W_{j}(t)$ to denote
the number of waiting customers at time $t$ whose service begins with phase
$j$. Then,
\begin{equation}
Y_{j}(t)=Z_{j}(t)+W_{j}(t) \label{eq:Yj}%
\end{equation}
is the number of phase $j$ customers in the system, either waiting or in
service. Let $Y(t)$ be the corresponding $d$-dimensional random vector and%
\begin{equation}
\tilde{Y}(t)=\frac{1}{\sqrt{n}}(Y(t)-n\gamma). \label{eq:Ydiffusion}%
\end{equation}
In each diffusion model, the process $\tilde{Y}=\{\tilde{Y}(t):t\geq0\}$ is
approximated by a $d$-dimensional diffusion process.

\subsection{System equation}

\label{sec:SystemEquations}

The $GI/Ph/n+GI$ queue is driven by several primitive processes. Let
$E=\{E(t):t\geq0\}$ be the arrival process, where $E(t)$ is the number of
arrivals by time $t$. For $j=1,\ldots,d$, let $S_{j}=\{S_{j}(t):t\geq0\}$ be a
Poisson process with rate $\nu_{j}$, and $\phi_{j}=\{\phi_{j}(i):i\in
\mathbb{N}\}$ be a sequence of iid $d$-dimensional random vectors such that
$\phi_{j}(i)$ takes $e^{\ell}$ with probability $P_{j\ell}$ and takes a zero
vector with probability $1-\sum_{\ell=1}^{d}P_{j\ell}$. Similarly, let
$\phi_{0}=\{\phi_{0}(i):i\in\mathbb{N}\}$ be a sequence of iid $d$-dimensional
random vectors such that $\phi_{0}(i)$ takes $e^{\ell}$ with probability
$p_{\ell}$. For $j=0,\ldots,d$, define the routing process $\Phi_{j}%
=\{\Phi_{j}(k):k\in\mathbb{N}\}$ by
\[
\Phi_{j}(k)=\sum_{i=1}^{k}\phi_{j}(i).
\]
We assume that $Y(0),E,S_{1},\ldots,S_{d},\Phi_{0},\ldots,\Phi_{d}$ are
mutually independent.

For $j=1,\ldots,d$, let $T_{j}(t)$ be the cumulative amount of service effort
received by customers in phase $j$ service by time $t$. Clearly,%
\begin{equation}
T_{j}(t)=\int_{0}^{t}Z_{j}(s)\,\mathrm{d}s\qquad\text{for }t\geq0\text{.}
\label{eq:T}%
\end{equation}
Thus, $S_{j}(T_{j}(t))$ is equal in distribution to the cumulative number of
phase $j$ service completions by time $t$. (For more details, please refer to
Section~4.1 of \cite{DaiHeTezcan10} on a perturbed system.) Let $L_{j}(t)$ be
the cumulative number of phase $j$ customers who have abandoned the system by
time $t$, and $L(t)$ be the corresponding $d$-dimensional vector. One can
check that the process $Y=\{Y(t):t\geq0\}$ satisfies the following equation%
\begin{equation}
Y(t)=Y(0)+\Phi_{0}(E(t))+\sum_{j=1}^{d}\Phi_{j}(S_{j}(T_{j}(t)))-S(T(t))-L(t),
\label{eq:sysequ}%
\end{equation}
where $S(T(t))=(S_{1}(T_{1}(t)),\ldots,S_{d}(T_{d}(t)))^{\prime}$.

To derive the diffusion models, consider a scaled version of (\ref{eq:sysequ}%
). We define several scaled processes by
\begin{align*}
&  \tilde{E}(t) =\frac{1}{\sqrt{n}}(E(t)-\lambda t),\qquad\tilde{S}%
(t)=\frac{1}{\sqrt{n}}(S(nt)-n\nu t),\\
&  \tilde{Z}(t) =\frac{1}{\sqrt{n}}(Z(t)-n\gamma,\qquad\tilde{L}(t) =\frac
{1}{\sqrt{n}}L(t),)\\
&  \tilde{\Phi}_{0}(t) =\frac{1}{\sqrt{n}}\sum_{i=1}^{\left\lfloor
nt\right\rfloor }(\phi_{0}(i)-p),\qquad\tilde{\Phi}_{j}(t)=\frac{1}{\sqrt{n}%
}\sum_{i=1}^{\left\lfloor nt\right\rfloor }(\phi_{j}(i)-p^{j})
\end{align*}
for $t\geq0$ and $j=1,\ldots,d$, where $p^{j}$ is the $j$th column of
$P^{\prime}$. By (\ref{eq:beta})--(\ref{eq:T}), the dynamical equation in
(\ref{eq:sysequ}) turns out to be%
\begin{equation}%
\begin{split}
\tilde{Y}(t)  &  =\tilde{Y}(0)-\beta\mu pt+p\tilde{E}(t)+\tilde{\Phi}%
_{0}\Big(\frac{E(t)}{n}\Big)\\
&  \quad+\sum_{j=1}^{d}\tilde{\Phi}_{j}\Big(\frac{S_{j}(T_{j}(t))}%
{n}\Big)-(I-P^{\prime})\tilde{S}\Big(\frac{T(t)}{n}\Big)-R\int_{0}^{t}%
\tilde{Z}(s)\,\mathrm{d}s-\tilde{L}(t).
\end{split}
\label{eq:sys}%
\end{equation}

\subsection{Diffusion models}

\label{sec:models}

In both diffusion models, we replace the scaled primitive processes in
(\ref{eq:sys}) by certain Brownian motions. These approximations can be
justified by the functional central limit theorem. When the number of servers
$n$ is large, the corresponding diffusion process in each model can be proved
close to $\tilde{Y}$ via a continuous map. Please refer to
\cite{DaiHeTezcan10} for related convergence results.

Let $B_{E}$ be a one-dimensional driftless Brownian motion with variance
$\lambda c_{a}^{2}/n$, where $c_{a}^{2}$ is the squared coefficient of
variation for the interarrival time distribution. Let $B_{0},\ldots,B_{d}$,
and $B_{S}$ are $d$-dimensional driftless Brownian motions with covariance
matrices $H^{0},\ldots,H^{d}$, and $\operatorname*{diag}(\nu)$, respectively,
where
\[
H_{k\ell}^{0}=%
\begin{cases}
p_{k}(1-p_{\ell}) & \text{if }k=\ell\text{,}\\
-p_{k}p_{\ell} & \text{otherwise}%
\end{cases}
\]
and
\[
H_{k\ell}^{j}=%
\begin{cases}
P_{jk}(1-P_{j\ell}) & \text{if }k=\ell\text{,}\\
-P_{jk}P_{j\ell} & \text{otherwise}%
\end{cases}
\]
for $j=1,\ldots,d$. We assume that $\tilde{Y}(0),B_{E},B_{0},\ldots
,B_{d},B_{S}$ are mutually independent. In the diffusion models, the above
Brownian motions take the places of the scaled primitive processes $\tilde
{E},\tilde{\Phi}_{0},\ldots,\tilde{\Phi}_{d},\tilde{S}$, respectively. Let
$Q(t)$ be the queue length or the number of waiting customers at time $t$ and
\[
\tilde{Q}(t)=\frac{1}{\sqrt{n}}Q(t).
\]
Then, $Q(t)=(e^{\prime}Y(t)-n)^{+}$ or equivalently,
\begin{equation}
\tilde{Q}(t)=(e^{\prime}\tilde{Y}(t))^{+}. \label{eq:Q}%
\end{equation}
When $n$ is large, these waiting customers are approximately distributed among
the $d$ phases according to distribution $p$ (see Lemma 2 of
\cite{DaiHeTezcan10}), i.e.,%
\[
W_{j}(t)\approx p_{j}Q(t)\qquad\text{for }j=1,\ldots,d\text{.}%
\]
It follows from (\ref{eq:Yj}) that%
\[
Z(t)\approx Y(t)-pQ(t).
\]
By (\ref{eq:Ydiffusion}) and (\ref{eq:Q}), this approximation has a scaled
version%
\begin{equation}
\tilde{Z}(t)\approx\tilde{Y}(t)-p(e^{\prime}\tilde{Y}(t))^{+}.
\label{eq:Ztild}%
\end{equation}
Let $G(t)$ be the cumulative number of abandoned customers by time $t$ and
\[
\tilde{G}(t)=\frac{1}{\sqrt{n}}G(t).
\]
These abandoned customers are also approximately distributed among the $d$
phases by distribution $p$, i.e.,
\begin{equation}
\tilde{L}(t)\approx p\tilde{G}(t). \label{eq:LG}%
\end{equation}
We also exploit the following approximations%
\begin{equation}
\frac{E(t)}{n}\approx\frac{\lambda t}{n}=\rho\mu t,\qquad\frac{T(t)}{n}%
\approx(\rho\wedge1)\gamma t,\qquad\frac{S_{j}(T_{j}(t))}{n}\approx(\rho
\wedge1)\nu_{j}\gamma_{j}t. \label{eq:appr}%
\end{equation}
The approximations in (\ref{eq:Ztild})--(\ref{eq:appr}) are used in both
diffusion models. These two models differ only in how to approximate the
scaled abandonment process $\tilde{G}=\{\tilde{G}(t):t\geq0\}$.

\subsubsection{Diffusion model using the patience time density at zero}

\label{sec:PatienceZero}

In the first diffusion model, the patience time distribution is used only
through its density at zero when approximating $\tilde{G}$. Let $F$ be the
distribution function of the patience times. We assume that
\begin{equation}
F(0)=0\qquad\text{and}\qquad\alpha=\lim_{t\downarrow0}\frac{F(t)}{t}%
<\infty\text{.} \label{eq:patiencecond}%
\end{equation}
Thus, $\alpha$ is the patience time density at zero. In this model, $\tilde
{G}$ is approximated by%
\begin{equation}
\tilde{G}(t)\approx\alpha\int_{0}^{t}(e^{\prime}\tilde{Y}(s))^{+}%
\,\mathrm{d}s\qquad\text{for }t\geq0\text{.} \label{eq:abandonment}%
\end{equation}
When $\alpha=0$, this approximation yields $\tilde{G}(t)\approx0$ for all
$t\geq0$. In this case, the diffusion model is for a $GI/Ph/n$ queue without
abandonment. When $\alpha>0$, the intuition of (\ref{eq:abandonment}) is as
follows. For a many-server queue in the QED regime, the queue length is
typically in the order of $O(n^{1/2})$. As a result, each customer's waiting
time should be in the order of $O(n^{-1/2})$, which is relatively short
because $n$ is large. At time $s$, a waiting customer will abandon the system
during the next $\delta$ time units with probability $\alpha\delta$. Hence,
the instantaneous abandonment rate of the system is approximately $\alpha
Q(s)$. It follows that%
\[
G(t)\approx\alpha\int_{0}^{t}Q(s)\,\mathrm{d}s,
\]
which, along with (\ref{eq:Ydiffusion}) and (\ref{eq:Q}), leads to the
approximation in (\ref{eq:abandonment}). This relationship can be justified by
Theorem 1 of \cite{DaiHe10}. As pointed out by \cite{DaiHe10} and
\cite{MandelbaumMomcilovic09}, the performance of a many-server queue in the
QED regime is insensitive to the patience time distribution as long as its
density $\alpha$ at zero is fixed and positive.

In the dynamical equation (\ref{eq:sys}), when the scaled primitive processes
are replaced by appropriate Brownian motions and the approximations in
(\ref{eq:Ztild})--(\ref{eq:abandonment}) are employed, we obtain the following
stochastic differential equation%
\begin{equation}%
\begin{split}
X(t)  &  =X(0)-\beta\mu pt+pB_{E}(t)+B_{0}(\rho\mu t)+\sum_{j=1}^{d}%
B_{j}((\rho\wedge1)\nu_{j}\gamma_{j}t)\\
&  \quad-(I-P^{\prime})B_{S}((\rho\wedge1)\gamma t)-R\int_{0}^{t}%
(X(s)-p(e^{\prime}X(s))^{+})\,\mathrm{d}s\\
&  \quad-p\alpha\int_{0}^{t}(e^{\prime}X(s))^{+}\,\mathrm{d}s
\end{split}
\label{eq:model1}%
\end{equation}
where we take $X(0)=\tilde{Y}(0)$. We may write (\ref{eq:model1}) into the
standard form
\[
X(t)=X(0)+\int_{0}^{t}b(X(s))\,\mathrm{d}s+\int_{0}^{t}\sigma
(X(s))\,\mathrm{d}B(s),
\]
where for each $x\in\mathbb{R}^{d}$, the drift coefficient $b$ is
\begin{equation}
b(x)=-\beta\mu p-R(x-p(e^{\prime}x)^{+})-p\alpha(e^{\prime}x)^{+},
\label{eq:drift1}%
\end{equation}
the diffusion coefficient $\sigma$ is a $d\times d$ constant matrix satisfying%
\begin{equation}%
\begin{split}
\Sigma(x)  &  =\sigma(x)\sigma^{\prime}(x)\\
&  =\rho\mu(c_{a}^{2}pp^{\prime}+H^{0})\\
&  \quad+(\rho\wedge1)\bigg(\sum_{j=1}^{d}\nu_{j}\gamma_{j}H^{j}+(I-P^{\prime
})\operatorname*{diag}(\nu)\operatorname*{diag}(\gamma)(I-P)\bigg),
\end{split}
\label{eq:covariance}%
\end{equation}
and $B$ is a $d$-dimensional standard Brownian motion. One can check that
$\Sigma(x)$ is positive definite and thus satisfies (\ref{eq:uelliptic}). The
drift coefficient $b$ in (\ref{eq:drift1}) is a piecewise linear function of
$x$. Both $b$ and $\sigma$ are Lipschitz continuous. Therefore, a strong
solution to (\ref{eq:model1}) exists and is known as a $d$-dimensional
piecewise OU process. In this model, the diffusion process $X$ depends on the
patience time distribution only through its density\ at zero. When using the
proposed algorithm to solve the stationary density, it follows from
Proposition~\ref{prop:independence} that the linear system (\ref{eq:equation})
has a unique solution.

If we replace $\rho$ by one in (\ref{eq:model1}), the resulting diffusion
process turns out to be the diffusion limit for $G/Ph/n+GI$ queues in Theorem
2 of \cite{DaiHeTezcan10}. This limit process is the strong solution of the
following stochastic differential equation%
\begin{equation}%
\begin{split}
\check{X}(t)  &  =\check{X}(0)-\beta\mu pt+pB_{E}(t)+B_{0}(\mu t)+\sum
_{j=1}^{d}B_{j}(\nu_{j}\gamma_{j}t)\\
&  \quad-(I-P^{\prime})B_{S}(\gamma t)-R\int_{0}^{t}(\check{X}(s)-p(e^{\prime
}\check{X}(s))^{+})\,\mathrm{d}s\\
&  \quad-p\alpha\int_{0}^{t}(e^{\prime}\check{X}(s))^{+}\,\mathrm{d}s.
\end{split}
\label{eq:diffusionlim}%
\end{equation}
Since $\rho$ is close to one in the current setting, the above limit process
justifies the diffusion model in (\ref{eq:model1}).

\subsubsection{Diffusion model using patience time hazard rate scaling}

\label{sec:HazardRate}

When the patience time distribution does not have a density at zero, the
diffusion model in (\ref{eq:model1}) fails to exist. When $\alpha=0$ and
$\rho>1$, the diffusion process $X$ in (\ref{eq:model1}) does not have a
stationary distribution. In this case, the model cannot be a satisfactory
approximation of the many-server queue, as the queue may have a stationary
distribution thanks to customer abandonment. It is also demonstrated in
\cite{ReedTezcan09} that when the density near zero rapidly changes, the
system performance can be sensitive to the patience time distribution in a
neighborhood of the origin. In this case, using the patience time density at
zero solely may not yield adequate approximation to the queue. Our second
diffusion model exploits the idea of scaling the patience time hazard rate
function, which was first proposed in \cite{ReedWard08} for single-server
queues and was recently extended to many-server queues in \cite{ReedTezcan09}.

In this model, we assume that the patience time distribution $F$ satisfies
\[
F(0)=0
\]
and it has a bounded hazard rate function $h$, given by%
\[
h(t)=\frac{f_{F}(t)}{1-F(t)}\qquad\text{for }t\geq0\text{,}%
\]
where $f_{F}$ is the density of $F$. With the hazard rate function, $F$ can be
written by%
\[
F(t)=1-\exp\Big(-\int_{0}^{t}h(s)\,\mathrm{d}s\Big)\qquad\text{for }%
t\geq0\text{.}%
\]
In the second diffusion model, the scaled abandonment process $\tilde{G}$ is
approximated by%
\begin{equation}
\tilde{G}(t)\approx\int_{0}^{t}\int_{0}^{(e^{\prime}\tilde{Y}(s))^{+}%
}h\Big(\frac{\sqrt{n}u}{\lambda}\Big)\,\mathrm{d}u\,\mathrm{d}s\qquad\text{for
}t\geq0\text{.} \label{eq:hazaband}%
\end{equation}
The entire patience time distribution is built into the approximation through
its hazard rate function. The intuition of the hazard rate scaling
approximation was explained in \cite{ReedWard08}: Consider the $Q(s)$ waiting
customers in the buffer at time $s$. In general, only a small fraction of
customers can abandon the system when the queue is working in the QED regime.
Then by time $s$, the $i$th customer from the back of the queue has been
waiting around $i/\lambda$ time units. Approximately, this customer will
abandon the queue during the next $\delta$ time units with probability
$h(i/\lambda)\delta$. It follows that for the system, the instantaneous
abandonment rate at time $s$ is close to $\sum_{i=1}^{Q(s)}h(i/\lambda)$. By
(\ref{eq:Ydiffusion}) and (\ref{eq:Q}), the scaled abandonment rate can be
approximated by%
\begin{equation}
\frac{1}{\sqrt{n}}\sum_{i=1}^{Q(s)}h\Big(\frac{i}{\lambda}\Big)\approx\int%
_{0}^{\tilde{Q}(s)}h\Big(\frac{\sqrt{n}u}{\lambda}\Big)\,\mathrm{d}u=\int%
_{0}^{(e^{\prime}\tilde{Y}(s))^{+}}h\Big(\frac{\sqrt{n}u}{\lambda
}\Big)\,\mathrm{d}u, \label{eq:arate}%
\end{equation}
from which (\ref{eq:hazaband}) follows. Note that the arrival rate $\lambda$
is in the order of $O(n)$ and $Q(s)$ is in the order of $O(n^{1/2})$. The
patience time distribution in a small neighborhood of zero, not just its
density at zero, is considered in the instantaneous abandonment rate in
(\ref{eq:arate}). Hence, the hazard rate scaling approximation in
(\ref{eq:hazaband}) is more accurate than that in (\ref{eq:abandonment}). This
approximation can be justified for $GI/M/n+GI$ queues by Propositions~9.1
and~9.2 in \cite{ReedTezcan09}. With minor modifications to the proofs, these
two propositions can be extended to $GI/Ph/n+GI$ queues.

Let $m$ be a nonnegative integer. Suppose that the hazard rate function $h$ is
$m$ times continuously differentiable in a neighborhood of zero. By Taylor's
theorem,%
\[
h(z)\approx h(0)+\sum_{\ell=1}^{m}h^{(\ell)}(0)\frac{z^{\ell}}{\ell!}%
\]
for $z>0$ small enough, where $h^{(\ell)}$ is the $\ell$th order derivative of
$h$. In this case, the approximation in (\ref{eq:hazaband}) turns out to be%
\[
\tilde{G}(t)\approx h(0)\int_{0}^{t}(e^{\prime}\tilde{Y}(s))^{+}%
\,\mathrm{d}s+\sum_{\ell=1}^{m}\frac{n^{\ell/2}h^{(\ell)}(0)}{\lambda^{\ell
}(\ell+1)!}\int_{0}^{t}((e^{\prime}\tilde{Y}(s))^{+})^{\ell+1}\,\mathrm{d}s.
\]
Because $h(0)$ is identical to the patience time density at zero, the
approximation in (\ref{eq:abandonment}) can be regarded as the zeroth degree
Taylor's approximation of (\ref{eq:hazaband}). When the patience times are
exponentially distributed, the hazard rate function is constant and the two
approximations in (\ref{eq:abandonment}) and (\ref{eq:hazaband}) are
identical. See Section~4 of \cite{ReedTezcan09} for more discussion.

Using the Brownian motion replacement and the approximations in
(\ref{eq:Ztild})--(\ref{eq:appr}) and (\ref{eq:hazaband}), we obtain the
second diffusion model for the $GI/Ph/n+GI$ queue, given by%

\begin{equation}%
\begin{split}
X(t)  &  =X(0)-\beta\mu pt+pB_{E}(t)+B_{0}(\rho\mu t)+\sum_{j=1}^{d}%
B_{j}((\rho\wedge1)\nu_{j}\gamma_{j}t)\\
&  \quad-(I-P^{\prime})B_{S}((\rho\wedge1)\gamma t)-R\int_{0}^{t}%
(X(s)-p(e^{\prime}X(s))^{+})\,\mathrm{d}s\\
&  \quad-p\int_{0}^{t}\int_{0}^{(e^{\prime}X(s))^{+}}h\Big(\frac{\sqrt{n}%
u}{\lambda}\Big)\,\mathrm{d}u\,\mathrm{d}s.
\end{split}
\label{eq:model2}%
\end{equation}
The diffusion process $X$ in (\ref{eq:model2}) has the same diffusion
coefficient $\sigma$ as in the first model (\ref{eq:model1}). Its drift
coefficient $b$ is%
\begin{equation}
b(x)=-\beta\mu p-R(x-p(e^{\prime}x)^{+})-p\int_{0}^{(e^{\prime}x)^{+}%
}h\Big(\frac{\sqrt{n}u}{\lambda}\Big)\,\mathrm{d}u\qquad\text{for }%
x\in\mathbb{R}^{d}\text{.} \label{eq:drift2}%
\end{equation}
Because $h$ is bounded, the drift coefficient $b$ is Lipschitz continuous and
the stochastic differential equation (\ref{eq:model2}) has a strong solution.
By Proposition~\ref{prop:independence}, the solution to the linear system
(\ref{eq:equation}) is unique when we use the proposed algorithm to solve the
stationary density of this diffusion model. Comparing (\ref{eq:model1}) and
(\ref{eq:model2}), one can see that the two models differ only in how the
patience time distribution is incorporated. Because a more accurate
approximation is used for the abandonment process, the second model can
provide a better approximation for the queue.

\section{ Choosing a reference density}

\label{sec:ReferenceDensity}

The reference density controls the convergence of the proposed algorithm. In
this section, we discuss how to choose appropriate reference densities for the
diffusion models. Some considerations are as follows.

First, to be a reference density, a candidate function $r$ must satisfy
(\ref{eq:ref}) even though the stationary density $g$ is unknown. The second
condition in (\ref{eq:ref}) requires that $r$ have a comparable or slower
decay rate than $g$. When $g$ is bounded, its decay rate is sufficient to
determine a function $r$ that satisfies (\ref{eq:ref}).

Second, the most computational effort in the algorithm is constructing and
solving the system of linear equations (\ref{eq:equation}). As demonstrated by
Proposition \ref{prop:Hn}, the finite-dimensional $H_{k}$ approximates the
infinite-dimensional $H$ better as $k$ increases, thus reducing the
approximation error. On the other hand, as the dimension of $H_{k}$ increases,
constructing and solving (\ref{eq:equation}) requires more computation time
and memory space. The condition number of the matrix $A$ in (\ref{eq:equation}%
) also gets worse as the dimension of $H_{k}$ becomes large. This yields
higher round-off error. A \textquotedblleft good\textquotedblright\ reference
density should balance the approximation error and the round-off error. With
such a reference density, it is possible to have small approximation error
even if the dimension of $H_{k}$ is moderate.

Intuitively, when $r$ is \textquotedblleft close\textquotedblright\ to the
stationary density $g$, both the ratio function $q$ and the projection
$\bar{c}$ are close to constant. We can thus expect that a space $H_{k}$ with
a moderate dimension is able to produce a satisfactory approximation. All
these observations motivate us to explore the tail behavior of a diffusion model.

\subsection{Tail behavior}

\label{sec:Tail}

Let us focus on the diffusion limit in (\ref{eq:diffusionlim}). Assume that
the piecewise OU process $\check{X}$ is positive recurrent and has a
stationary distribution. Let $\check{X}(\infty)$ be the corresponding
$d$-dimensional random vector in steady state. Since $\rho$ is close to one,
the tail behavior of the diffusion process $X$ in (\ref{eq:model1}) is
expected to be comparable to that of the limit diffusion process $\check{X}$
in (\ref{eq:diffusionlim}).

To explore the tail behavior of $\check{X}(\infty)$, consider a sequence of
$GI/GI/n+GI$ queues in the QED regime. If all patience times are infinite, the
queues turn out to be $GI/GI/n$ queues without customer abandonment. In each
queue, the service times are iid following a general distribution. We assume
that these queues, each indexed by the number of servers $n$, have the same
service time distribution. Let $\lambda_{n}$ be the arrival rate of the $n$th
system. To mathematically define the QED regime, we assume that%
\begin{equation}
\lim_{n\rightarrow\infty}\frac{\lambda_{n}}{n}>0 \label{eq:lambda}%
\end{equation}
and
\begin{equation}
\lim_{n\rightarrow\infty}\sqrt{n}(1-\rho_{n})=\check{\beta}\qquad\text{for
some }\check{\beta}\in\mathbb{R}\text{,} \label{eq:QED}%
\end{equation}
where $\rho_{n}=\lambda_{n}/(n\mu)$ is the traffic intensity of the $n$th system.

Assume that all these queues are in steady state. Let $N_{n}(\infty)$ be the
stationary number of customers in the $n$th system and%
\[
\tilde{N}_{n}(\infty)=\frac{1}{\sqrt{n}}(N_{n}(\infty)-n).
\]
For $GI/GI/n$ queues in the QED regime, the limit queue length in steady state
was studied in \cite{GamarnikMomcilovic08}, where the service time
distribution is assumed to be lattice-valued on a finite support. The authors
first showed that $\tilde{N}_{n}(\infty)$ weakly converges to a random
variable $\check{N}(\infty)$ as $n$ goes to infinity, and then proved that%
\begin{equation}
\lim_{z\rightarrow\infty}\frac{1}{z}\log\mathbb{P}[\check{N}(\infty
)>z]=-\frac{2\check{\beta}}{c_{a}^{2}+c_{s}^{2}}, \label{eq:GGnTail}%
\end{equation}
where $c_{a}^{2}$ and $c_{s}^{2}$ are the squared coefficients of variation of
the interarrival and the service time distributions, respectively. In
(\ref{eq:GGnTail}), the decay rate does not depend on the service time
distribution beyond its first two moments. Recently, this result has been
extended in \cite{GamarnikGoldberg11} to $GI/GI/n$ queues with a general
service time distribution.

When $\alpha=0$ and $d=1$, the limit diffusion process $\check{X}$\ in
(\ref{eq:diffusionlim}) is for $GI/M/n$ queues without customer abandonment.
In this case, the service time distribution is exponential and $\check
{N}(\infty)=\check{X}(\infty)$. It was proved in \cite{HalWhi81} that the
stationary density of $\check{X}(\infty)$\ has a closed-form expression%
\begin{equation}
\check{g}(z)=%
\begin{cases}
a_{1}\exp\bigg(-\dfrac{(z+\check{\beta})^{2}}{1+c_{a}^{2}}\bigg) & \text{if
}z<0,\\
a_{2}\exp\Big(-\dfrac{2\check{\beta}z}{1+c_{a}^{2}}\Big) & \text{if }z\geq0,
\end{cases}
\label{eq:GMn}%
\end{equation}
where $a_{1}$ and $a_{2}$ are normalizing constants making $\check{g}$
continuous at zero. The decay rate of $\check{g}$ in (\ref{eq:GMn}) is
consistent with (\ref{eq:GGnTail}). Both formulas suggest that $\check
{N}(\infty)$ has an exponential tail on the right side.

For a $GI/GI/n+GI$ queue with many servers and customer abandonment, the
limiting tail behavior of $\tilde{N}_{n}(\infty)$ remains unknown except for
very simple cases. When $\alpha>0$ and $d=1$, the limit diffusion process
$\check{X}$ in (\ref{eq:diffusionlim}) is a one-dimensional piecewise OU
process. It admits a piecewise normal stationary density%
\begin{equation}
\check{g}(z)=%
\begin{cases}
a_{3}\exp\bigg(-\dfrac{(z+\check{\beta})^{2}}{1+c_{a}^{2}}\bigg) & \text{if
}z<0,\\
a_{4}\exp\bigg(-\dfrac{\alpha(z+\alpha^{-1}\mu\check{\beta})^{2}}{\mu
(1+c_{a}^{2})}\bigg) & \text{if }z\geq0,
\end{cases}
\label{eq:GMnGI}%
\end{equation}
where $a_{3}$ and $a_{4}$ are normalizing constants that make $\check{g}$
continuous at zero. See \cite{BrowneWhitt} for more details.

Observing (\ref{eq:GGnTail}) and (\ref{eq:GMnGI}), we conjecture that for a
sequence of $GI/GI/n+GI$ queues in the QED regime, the limiting tail behavior
of $\tilde{N}_{n}(\infty)$ depends on the service time distribution only
through its first two moments, and on the patience time distribution only
through its density at zero.

\begin{conjecture}
\label{conj:TailModel1} Consider a sequence of $GI/GI/n+GI$ queues that
satisfies (\ref{eq:patiencecond}), (\ref{eq:lambda}), and (\ref{eq:QED}).
Assume that the patience time distribution has a positive density at zero,
i.e., $\alpha>0$ in (\ref{eq:patiencecond}). Assume further that the
interarrival and the service time distributions satisfy the $T_{0}$
assumptions (i)--(iii) in Section~2.1 of \cite{GamarnikGoldberg11}. Then, (a)
$N_{n}(\infty)$ exists for each $n$; (b) the sequence of random variables
$\{\tilde{N}_{n}(\infty):n\in\mathbb{N}\}$ weakly converges to a random
variable $\check{N}(\infty)$; (c) $\check{N}(\infty)$ satisfies
\[
\lim_{z\rightarrow\infty}\frac{1}{z^{2}}\log\mathbb{P}[\check{N}%
(\infty)>z]=-\frac{\alpha}{\mu(c_{a}^{2}+c_{s}^{2})}.
\]

\end{conjecture}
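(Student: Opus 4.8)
The plan is to prove the three assertions in sequence, disposing of existence and the weak limit with standard many-server heavy-traffic machinery and reserving the bulk of the effort for the tail exponent in part~(c), which I would establish by sandwiching $\mathbb{P}[\check N(\infty)>z]$ between matching exponential upper and lower bounds. For part~(a), I would first fix $n$ and show that the $GI/GI/n+GI$ queue is positive recurrent. Since the patience distribution is nondegenerate, customers depart even when $\rho_{n}\ge 1$, so the number in system is dominated pathwise by a system in which every waiting customer abandons at a rate bounded below; a Foster--Lyapunov argument applied to a Markovian state descriptor that tracks the residual service and residual patience times then yields a unique stationary distribution and hence a well-defined $N_{n}(\infty)$. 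For part~(b), I would invoke the process-level QED limit for $GI/GI/n+GI$ queues under the $T_{0}$ assumptions of \cite{GamarnikGoldberg11}, producing a limiting steady-state law $\check N(\infty)$, and then carry out the interchange-of-limits step $\lim_{n}\lim_{t}=\lim_{t}\lim_{n}$. The interchange is the delicate point here: it requires tightness and uniform integrability of $\{\tilde N_{n}(\infty)\}$, which I would obtain from an $n$-uniform moment bound produced by the same Lyapunov function used in part~(a).

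For the \emph{upper} bound in part~(c), the key observation is that once the queue is long the abandonment creates a linear restoring drift. When $\tilde N_{n}=z>0$ the net instantaneous drift of $\tilde N_{n}$ is approximately $-(\mu\check\beta+\alpha z)$, while the centered arrival and departure processes contribute, by the functional central limit theorem for the superposition of the $n$ busy servers' renewal service processes, a diffusive fluctuation of variance rate $\sigma^{2}=\mu(c_{a}^{2}+c_{s}^{2})$. This is exactly the one-dimensional piecewise Ornstein--Uhlenbeck behavior behind the closed form (\ref{eq:GMnGI}), and it is also consistent with the covariance computation in (\ref{eq:covariance}) when $d=1$. Guided by the diffusion generator, I would make the bound rigorous in the prelimit with the exponential-of-quadratic test function $V(x)=\exp(\theta (x^{+})^{2})$: the abandonment term contributes $\approx-2\alpha\theta x^{2}V$ and the diffusion term contributes $\approx 2\sigma^{2}\theta^{2}x^{2}V$, so the drift of $V$ is eventually negative precisely when $\theta<\alpha/\sigma^{2}=\alpha/(\mu(c_{a}^{2}+c_{s}^{2}))$. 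A Dynkin/Foster--Lyapunov estimate then gives $\sup_{n}\mathbb{E}[\exp(\theta(\tilde N_{n}^{+})^{2})]<\infty$ for every such $\theta$, and Markov's inequality yields
\[
\limsup_{z\to\infty}\frac{1}{z^{2}}\log\mathbb{P}[\check N(\infty)>z]\le-\frac{\alpha}{\mu(c_{a}^{2}+c_{s}^{2})}.
\]
Note that the service distribution enters this exponent only through $c_{s}^{2}$, which is the source of the claimed insensitivity to higher moments.

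The \emph{lower} bound is where the main difficulty lies. Here one must show that the queue actually reaches level $z\sqrt n$ with probability at least $\exp(-(1+o(1))\alpha z^{2}/(\mu(c_{a}^{2}+c_{s}^{2})))$, i.e.\ that no escape path cheaper than the Gaussian/OU one exists. I would adapt the large-deviation and renewal-comparison arguments of \cite{GamarnikGoldberg11}, constructing an explicit near-optimal trajectory along which the free netput process drifts up to level $z$ against the restoring force and estimating its probability by a change of measure. The central obstacle, and the reason this remains only a conjecture, is the absence of any finite-dimensional Markovian description for general non-phase-type service: one must prove that conditioning on the rare event of a persistently long queue does not distort the effective variance rate of the aggregate departure process away from $\mu c_{s}^{2}$. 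Establishing this insensitivity uniformly over the relevant time horizon---so that the renewal-superposition central limit theorem continues to govern the departures under the conditioned measure---is precisely the technical heart of the problem and would constitute the bulk of a full proof.
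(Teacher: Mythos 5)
You have set out to prove a statement that the paper itself presents as Conjecture~\ref{conj:TailModel1}, precisely because no proof is known: the paper offers no proof at all, only circumstantial evidence --- the exponential-tail result (\ref{eq:GGnTail}) for $GI/GI/n$ queues without abandonment, the closed-form piecewise-normal density (\ref{eq:GMnGI}) in the one-dimensional case, and the heuristic that abandonment at a rate proportional to queue length makes the departure process behave like an infinite-server system, forcing a Gaussian tail. So there is no paper proof to compare against; the only meaningful question is whether your proposal settles the open problem, and it does not. You concede this yourself in the final paragraph, where you identify the lower bound (showing that conditioning on a persistently long queue does not distort the effective variance rate of the aggregate departure process away from $\mu c_s^2$) as the unresolved ``technical heart.'' A proof that ends by acknowledging its central step is open is a research program, not a proof.

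Beyond the conceded gap, two earlier steps also fail as stated. In part (b) you propose to ``invoke the process-level QED limit for $GI/GI/n+GI$ queues under the $T_0$ assumptions of \cite{GamarnikGoldberg11}.'' No such theorem exists: \cite{GamarnikGoldberg11} treats $GI/GI/n$ queues \emph{without} abandonment and establishes steady-state tail behavior, not a process-level limit with abandonment; the process-level limits available in the literature cited by this paper require phase-type service (\cite{DaiHeTezcan10}) or exponential service (\cite{ReedTezcan09}). The existence of the weak limit $\check N(\infty)$ is exactly assertion (b) of the conjecture, so it cannot serve as an input to its own proof. Likewise, your Foster--Lyapunov upper bound in part (c) treats $\tilde N_n$ as though it had a well-defined instantaneous drift $-(\mu\check\beta+\alpha z)$ and variance rate $\mu(c_a^2+c_s^2)$; for general non-phase-type service, $\tilde N_n$ is not Markov, the natural Markov descriptor (residual service and patience times of all customers) is infinite-dimensional, and justifying the Dynkin estimate for $V(x)=\exp(\theta (x^+)^2)$ uniformly in $n$ in that setting is essentially as hard as the insensitivity question you defer. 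Your sketch is a sensible elaboration of the paper's own heuristics --- the piecewise-OU picture and the $(c_a^2+c_s^2)$ variance constant are exactly the intuition the authors give --- but each of its three parts rests on machinery that either does not exist for general $GI$ service or is itself the conjecture.
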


The intuition below may help understand why the conjectured decay rate must be
Gaussian. When $\check{N}(\infty)>z$ for some $z>0$, there are more than
$n^{1/2}z$ waiting customers in the queue correspondingly, and each waiting
customer is \textquotedblleft racing\textquotedblright\ to abandon the system.
At any time, the instantaneous abandonment rate is approximately proportional
to the queue length. In such a system, the customer departure process,
including both service completions and customer abandonments, behaves as if
the system is a queue with infinite servers. Thus, one can expect that the
tail of the limit queue length is Gaussian, which decays much faster than an
exponential tail for queues without abandonment.

\subsection{Reference densities for model (\ref{eq:model1})}

\label{sec:RefTrunc}

For $GI/Ph/n+GI$ queues, the limit diffusion process $\check{X}$ in
(\ref{eq:diffusionlim}) satisfies
\[
\check{N}(\infty)=e^{\prime}\check{X}(\infty).
\]
The discussion in Section~\ref{sec:Tail} gives ample evidence of the tail
behavior $\mathbb{P}[\check{N}(\infty)>z]$ as $z\rightarrow\infty$. Although
the left tail $\mathbb{P}[\check{N}(\infty)<-z]$ as $z\rightarrow\infty$
remains unknown when $d>1$, our numerical experiments suggest that this tail
is not sensitive to the service time distribution beyond its mean. Thus, we
use the left tail for a queue with an exponential service time distribution to
construct the reference density. We propose to use a product reference density%
\begin{equation}
r(x)=\prod_{j=1}^{d}r_{j}(x_{j})\qquad\text{for }x\in\mathbb{R}^{d}\text{.}
\label{eq:reference}%
\end{equation}

When $\alpha=0$ and $\rho<1$ in (\ref{eq:model1}), there is no abandonment in
the queue. Based on (\ref{eq:GGnTail}) and (\ref{eq:GMn}), we choose
\begin{equation}
r_{j}(z)=%
\begin{cases}
\exp\bigg(-\dfrac{(z+\gamma_{j}\beta)^{2}}{1+c_{a}^{2}}\bigg) & \text{if
}z<0,\\
\exp\bigg(-\dfrac{2\beta z}{c_{a}^{2}+c_{s}^{2}}-\dfrac{\gamma_{j}^{2}%
\beta^{2}}{1+c_{a}^{2}}\bigg) & \text{if }z\geq0,
\end{cases}
\label{eq:RefAlpha0}%
\end{equation}
where $\beta$ is given by (\ref{eq:beta}). The function $r_{j}$ has an
exponential tail on the right and a Gaussian tail on the left. One can check
that the reference density given by (\ref{eq:reference}) and
(\ref{eq:RefAlpha0}) satisfies condition (\ref{eq:refcond}). In
(\ref{eq:RefAlpha0}), we set the shift term for $z<0$ to be $\gamma_{j}\beta$
according to the following observation. In the associated queue, $\beta$ is
the scaled mean number of idle servers and $\gamma_{j}$ is the fraction of
phase $j$ service load. In steady state, one can expect that $\tilde{Y}%
_{j}(t)$, the centered and scaled number of phase $j$ customers, is around
$-\gamma_{j}\beta$.

When $\alpha>0$ in (\ref{eq:model1}), the associated queue has abandonment. By
(\ref{eq:GMnGI}) and Conjecture~\ref{conj:TailModel1}, we choose
\begin{equation}
r_{j}(z)=%
\begin{cases}
\exp\bigg(-\dfrac{(z+\gamma_{j}\beta)^{2}}{1+c_{a}^{2}}\bigg) & \text{if
}z<0,\\
\exp\bigg(-\dfrac{\alpha(z+p_{j}\alpha^{-1}\mu\beta)^{2}}{\mu(c_{a}^{2}%
+c_{s}^{2})}+\dfrac{p_{j}^{2}\alpha^{-1}\mu\beta^{2}}{c_{a}^{2}+c_{s}^{2}%
}-\dfrac{\gamma_{j}^{2}\beta^{2}}{1+c_{a}^{2}}\bigg) & \text{if }z\geq0,
\end{cases}
\label{eq:Ref}%
\end{equation}
whose two tails are both Gaussian but have different decay rates. This
reference density also satisfies (\ref{eq:refcond}). In (\ref{eq:Ref}), the
shift term for $z\geq0$ is taken to be $p_{j}\mu\beta/\alpha$ because of the
observation below. When $\rho\geq1$, the throughput of the queue is nearly
$n\mu$. Let $q_{0}$ be the scaled queue length \textquotedblleft in
equilibrium\textquotedblright, i.e., the arrival and the departure rates of
the system are balanced when the queue length is around $n^{1/2}q_{0}$.
Because in this case the abandonment rate is $\alpha n^{1/2}q_{0}$, we must
have $\lambda=n\mu+\alpha n^{1/2}q_{0}$, or $q_{0}=-\mu\beta/\alpha$ by
(\ref{eq:beta}). Since the fraction of phase $j$ waiting customers is around
$p_{j}$, $\tilde{Y}_{j}(t)$ is around $-p_{j}\mu\beta/\alpha$ as the queue
reaches a steady state.

\subsection{Reference densities for model (\ref{eq:model2})}

\label{sec:densityHaz}

For the diffusion model (\ref{eq:model2}) that adopts the patience time hazard
rate scaling, the tail behavior of $X$ in steady state is left to future
research. In some cases, we may exploit the diffusion limit in
(\ref{eq:diffusionlim}) to facilitate the choice of a reference density for
the current model. The principle is again to ensure that the reference density
has a comparable or slower decay rate than the stationary density of $X$. For
that, we build an auxiliary queue that shares the same arrival process and
service times with the $GI/Ph/n+GI$ queue, but the auxiliary queue may have no
abandonment or have an exponential patience time distribution. Let $\hat{X}$
be the diffusion process in (\ref{eq:model1}) for the auxiliary queue. If
$\hat{X}$ has a slower decay rate than $X$, a reference density of $\hat{X}$
must be a reference density of $X$, too.

When $\rho<1$, the auxiliary queue is a $GI/Ph/n$ queue. It is intuitive that
the queue length decays faster in the $GI/Ph/n+GI$ queue than in the auxiliary
queue because the latter has no abandonment. As a consequence, $\hat{X}$ has a
slower decay rate than $X$ and the reference density given by
(\ref{eq:reference}) and (\ref{eq:RefAlpha0}) for $\hat{X}$ can be used for
the current model.

When $\rho>1$, the auxiliary queue is a $GI/Ph/n+M$ queue. Let $\alpha>0$ be
the rate of the exponential patience time distribution, which is to be
determined in order for $\hat{X}$ to have an appropriate decay rate. For that,
we need investigate the abandonment process of the $GI/Ph/n+GI$ queue.

Assume that the hazard rate function $h$ is $m$ times continuously
differentiable in a neighborhood of zero for some nonnegative integer $m$, and
among $\ell=0,\ldots,m$, there is at least one $h^{(\ell)}(0)\neq0$. We follow
the convention that $h^{(0)}(0)=h(0)$. Let $\ell_{0}$ be the smallest
nonnegative integer such that $h^{(\ell_{0})}(0)\neq0$. For $z>0$ in a small
neighborhood of zero, the $\ell_{0}$th degree Taylor's approximation of $h$ is%
\begin{equation}
h(z)\approx\frac{h^{(\ell_{0})}(0)z^{\ell_{0}}}{\ell_{0}!}, \label{eq:Taylor}%
\end{equation}
which, along with (\ref{eq:Q}) and (\ref{eq:hazaband}), implies that the
scaled abandonment process can be approximated by%
\[
\tilde{G}(t)\approx\frac{n^{\ell_{0}/2}h^{(\ell_{0})}(0)}{\lambda^{\ell_{0}%
}(\ell_{0}+1)!}\int_{0}^{t}(\tilde{Q}(s))^{\ell_{0}+1}\,\mathrm{d}s.
\]
This approximation implies that the abandonment process depends on the hazard
rate function primarily through $h^{(\ell_{0})}(0)$, the nonzero derivative at
the origin with the lowest order. It also implies that the scaled abandonment
rate at time $t$ is approximately%
\begin{equation}
\int_{0}^{\tilde{Q}(t)}h\Big(\frac{\sqrt{n}u}{\lambda}\Big)\,\mathrm{d}%
u\approx\frac{n^{\ell_{0}/2}h^{(\ell_{0})}(0)}{\lambda^{\ell_{0}}(\ell
_{0}+1)!}(\tilde{Q}(t))^{\ell_{0}+1}. \label{eq:abrate}%
\end{equation}

In the hazard rate scaling, the scaled queue length in equilibrium $q_{0}$
satisfies%
\begin{equation}
\lambda=n\mu+\sqrt{n}\int_{0}^{q_{0}}h\Big(\frac{\sqrt{n}u}{\lambda
}\Big)\,\mathrm{d}u. \label{eq:balance}%
\end{equation}
If (\ref{eq:abrate}) holds, it turns out to be%
\[
\lambda\approx n\mu+\frac{n^{(\ell_{0}+1)/2}h^{(\ell_{0})}(0)}{\lambda
^{\ell_{0}}(\ell_{0}+1)!}q_{0}^{\ell_{0}+1},
\]
which gives us%
\begin{equation}
q_{0}\approx\frac{1}{\sqrt{n}}\bigg(\frac{\lambda^{\ell_{0}}(\ell
_{0}+1)!(\lambda-n\mu)}{h^{(\ell_{0})}(0)}\bigg)^{1/(\ell_{0}+1)}.
\label{eq:q0}%
\end{equation}
The scaled queue length process fluctuates around this equilibrium length.
Correspondingly, the instantaneous abandonment rate changes around an
equilibrium level, too. This observation motivates us to take%
\begin{equation}
\alpha=\frac{n^{\ell_{0}/2}h^{(\ell_{0})}(0)}{\lambda^{\ell_{0}}(\ell_{0}%
+1)!}q_{0}^{\ell_{0}} \label{eq:alpha}%
\end{equation}
for the auxiliary $GI/Ph/n+M$ queue. With this setting, the original queue and
the auxiliary queue have comparable abandonment rates when the scaled queue
length is close to $q_{0}$. For any $q_{1}>q_{0}$, when the scaled queue
length is $q_{1}$ in both queues, the abandonment rate in the auxiliary queue
is lower because%
\[
\alpha q_{1}<\frac{n^{\ell_{0}/2}h^{(\ell_{0})}(0)}{\lambda^{\ell_{0}}%
(\ell_{0}+1)!}q_{1}^{\ell_{0}+1}.
\]
Hence, when the queue length is longer than $q_{0}$, it decays slower in the
auxiliary queue than in the original queue. Consequently, the decay rate of
$\hat{X}$ is slower than that of $X$ and the reference density of $\hat{X}$
can work for this diffusion model.

The above discussion suggests a product reference density in
(\ref{eq:reference}) with%
\begin{equation}
r_{j}(z)=%
\begin{cases}
\exp\bigg(-\dfrac{(z+\gamma_{j}\beta)^{2}}{1+c_{a}^{2}}\bigg) & \text{if
}z<0,\\
\exp\bigg(-\dfrac{\alpha(z-p_{j}q_{0})^{2}}{\mu(c_{a}^{2}+c_{s}^{2})}%
+\dfrac{\alpha p_{j}^{2}q_{0}^{2}}{\mu(c_{a}^{2}+c_{s}^{2})}-\dfrac{\gamma
_{j}^{2}\beta^{2}}{1+c_{a}^{2}}\bigg) & \text{if }z\geq0,
\end{cases}
\label{eq:refhaz}%
\end{equation}
where $q_{0}$ follows (\ref{eq:q0}) and $\alpha$ follows (\ref{eq:alpha}).

The above reference density fails when $\rho=1$ and $\ell_{0}>0$, because
$q_{0}=0$ by (\ref{eq:q0}) and thus $\alpha$ is zero in \eqref{eq:alpha}. In
this case, we can still choose a reference density by (\ref{eq:reference}) and
(\ref{eq:refhaz}) but using a traffic intensity $\rho$ that is slightly larger
than one. Because the tail of the queue length becomes heavier as $\rho$
increases, a reference density for model (\ref{eq:model2}) with $\rho>1$ must
have a comparable or slower decay rate than the stationary density of the
model with $\rho=1$.

The reference density in (\ref{eq:reference}) and (\ref{eq:refhaz}) that
exploits the lowest-order nonzero derivative at the origin may fail when the
hazard rate function has a rapid change near the origin. In this case, the
Taylor's approximation in (\ref{eq:Taylor}) may not be satisfactory when the
queue length is not short enough. Such an example is discussed in
Section~\ref{sec:Example4}. Choosing a reference density for that is more
flexible. In addition, the above procedure cannot choose a reference density
when all $h^{(\ell)}(0)$'s are zero, i.e., the hazard rate function is zero in
a neighborhood of the origin. This topic will be explored in future research.

\subsection{Truncation hypercube}

Once the reference density has been determined, we can choose the truncation
hypercube $K$ in (\ref{eq:K}) by the procedure below. First, pick a small
number $\varepsilon_{0}>0$. Then, choose a hypercube $K$ such that%
\begin{equation}
\int_{\mathbb{R}^{d}\setminus K}r(x)\,\mathrm{d}x<\varepsilon_{0}.
\label{eq:eps0}%
\end{equation}
When $\varepsilon_{0}$ is small enough, the influence of the reference density
outside $K$ is negligible in computing the stationary density.

\section{Numerical examples}

\label{sec:Numerical}

Several numerical examples are presented in this section. In each example, we
compute the stationary distribution of the number of customers in a
many-server queue using a diffusion model and the proposed algorithm. We
assume that the customer arrivals follow a homogeneous Poisson process and the
service times follow a two-phase hyperexponential distribution with mean one,
i.e., the system is an $M/H_{2}/n+GI$ queue with $c_{a}^{2}=1$ and $\mu=1$. In
such a queue, there are two types of customers. One type has a shorter mean
service time than the other, and the service times of either type are iid
following an exponential distribution. We approximate this queue by a
two-dimensional diffusion process $X$. When the patience time distribution is
exponential, both (\ref{eq:model1}) and (\ref{eq:model2}) yield the same
diffusion process. When the patience time distribution is non-exponential, we
use model (\ref{eq:model2}) as it is more accurate. The results computed using
the diffusion models are compared with the ones obtained either by the
matrix-analytic method or by simulation. Please refer to \cite{Neuts81} and
\cite{LatoucheRamaswami99} for the implementation of the matrix-analytic
method. All simulation results are obtained by averaging twenty runs and in
each run, the queue is simulated for one hundred thousand time units.

In the proposed algorithm, all numerical integration is implemented using a
Gauss-Legendre quadrature rule. See \cite{Kress98} for more details. When
computing $A_{i\ell}$ or $v_{i}$ in (\ref{eq:coeff}), the integrand is
evaluated at eight points in each dimension. In the numerical examples, the
tail probability
\begin{equation}
\mathbb{P}[X_{1}(\infty)+X_{2}(\infty)>z]=\int_{\{x\in\mathbb{R}^{2}%
:x_{1}+x_{2}>z\}}g(x)\,\mathrm{d}x\qquad\text{for some }z\in\mathbb{R}
\label{eq:Uz}%
\end{equation}
is also computed, where $X(\infty)=(X_{1}(\infty),X_{2}(\infty))^{\prime}$ is
the two-dimensional random vector having probability density $g$. The integral
in (\ref{eq:Uz}) is computed by adding up the integrals over the finite
elements that intersect with the set $\{x\in\mathbb{R}^{2}:x_{1}+x_{2}>z\}$,
and the integral over each finite element is again computed using a
Gaussian-Legendre quadrature formula. Because the indicator function has jumps
inside certain finite elements, we use sixty-four points in each dimension
when evaluating the integrand over each finite element.

\subsection{Example 1: an $M/H_{2}/n+M$ queue}

\label{sec:Example1}

\begin{figure}[t]
\centering
\subfloat[$\rho=1.141$ and $n=50$] {\label{fig:MH2M50}
\includegraphics[width=2.45in]{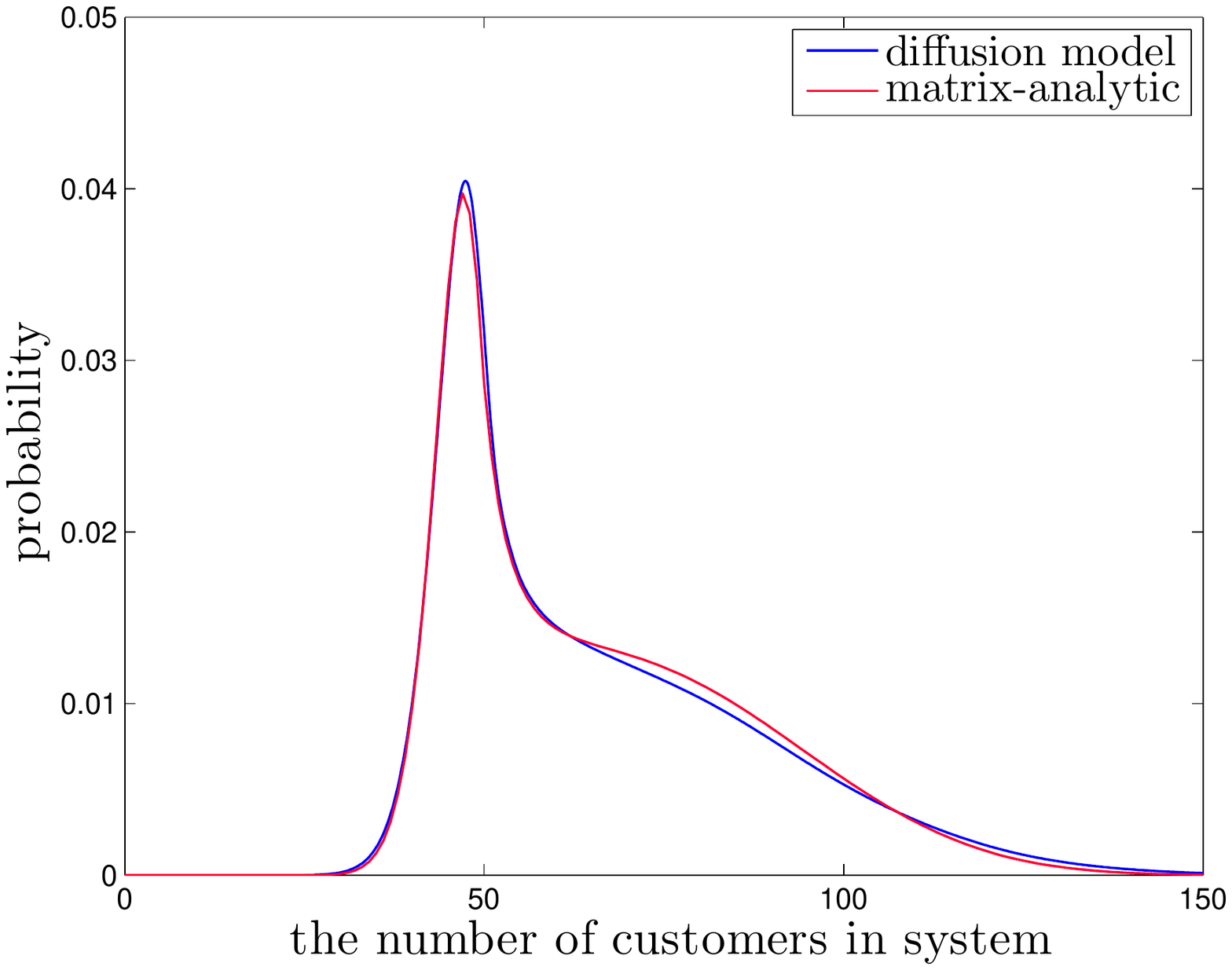}}
\subfloat[$\rho=1.045$ and $n=500$] {\label{fig:MH2M500}
\includegraphics[width=2.45in]{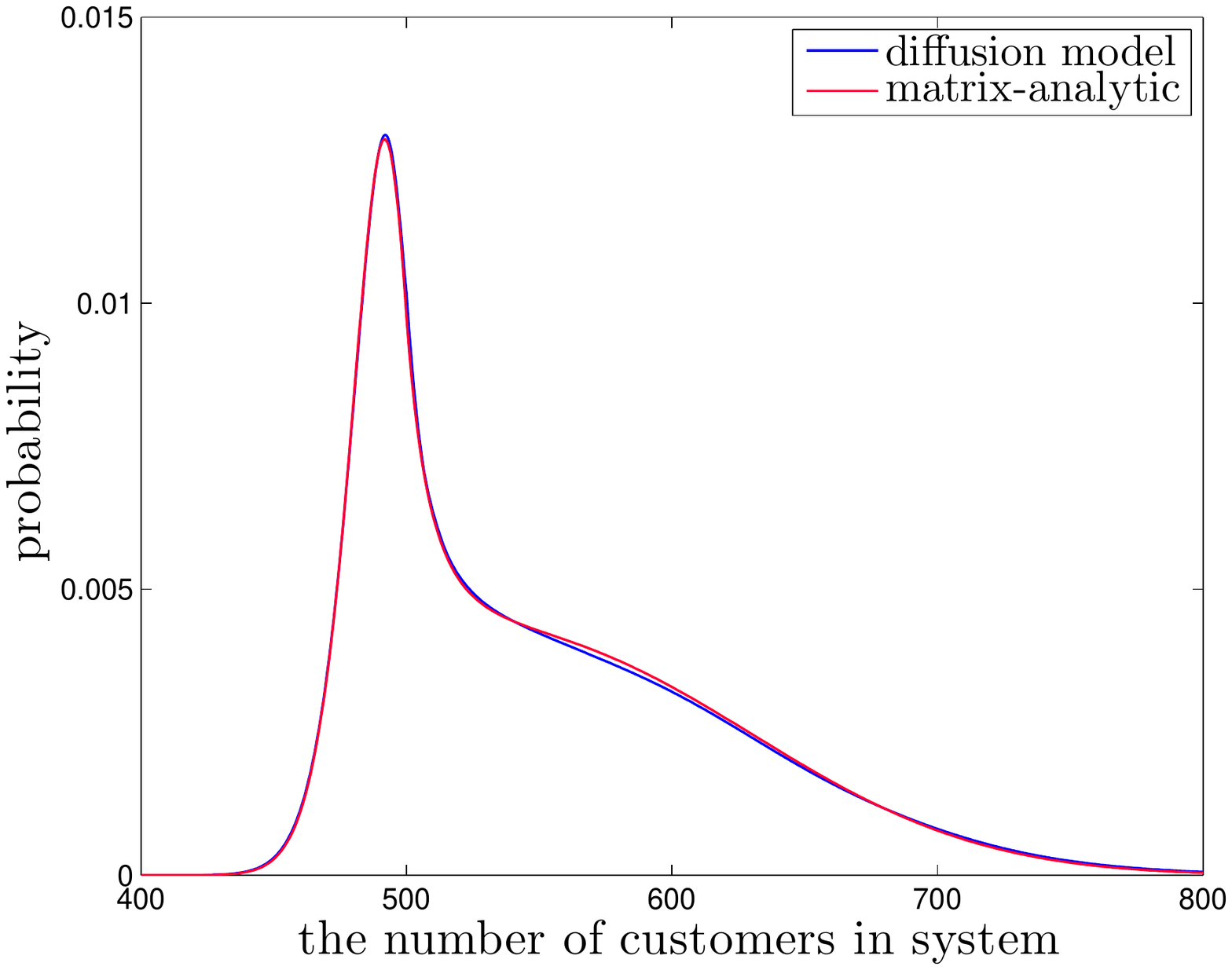}}\caption{The stationary
distribution of the customer number in the $M/H_{2}/n+M$ queue.}%
\label{fig:MH2nM}%
\end{figure}

\begin{table}[t]
\centering
\subfloat[$\rho=1.141$ and $n=50$]{\label{tab:MH250M}
\begin{tabular}
[c]{l|ll}
& Model (\ref{eq:model1}) & Matrix-analytic\\ \hline
Mean queue length & $17.27$ & $17.16$ \\
Abandonment fraction & $0.1512$ & $0.1503$ \\
$\mathbb{P}[N(\infty)>45]$ & $0.8675$ & $0.8523$ \\
$\mathbb{P}[N(\infty)>50]$ & $0.6785$ & $0.6726$ \\
$\mathbb{P}[N(\infty)>100]$ & $0.08700$ & $0.07436$ \\
$\mathbb{P}[N(\infty)>130]$ & $0.008662$ & $0.003299$
\end{tabular}}
\par
\subfloat[$\rho=1.045$ and $n=500$]{\label{tab:MH2500M}
\begin{tabular}
[c]{l|ll}
& Model (\ref{eq:model1}) & Matrix-analytic\\ \hline
Mean queue length & $54.17$ & $54.05$ \\
Abandonment fraction & $0.05181$ & $0.05173$ \\
$\mathbb{P}[N(\infty)>470]$ & $0.9701$ & $0.9694$ \\
$\mathbb{P}[N(\infty)>500]$ & $0.6838$ & $0.6818$ \\
$\mathbb{P}[N(\infty)>600]$ & $0.2244$ & $0.2229$ \\
$\mathbb{P}[N(\infty)>750]$ & $0.008233$ & $0.006395$
\end{tabular}}\caption{Performance measures of the $M/H_{2}/n+M$ queue.}%
\label{tab:MH2nM}%
\end{table}

\begin{figure}[t]
\centering
\includegraphics[width=2.45in]{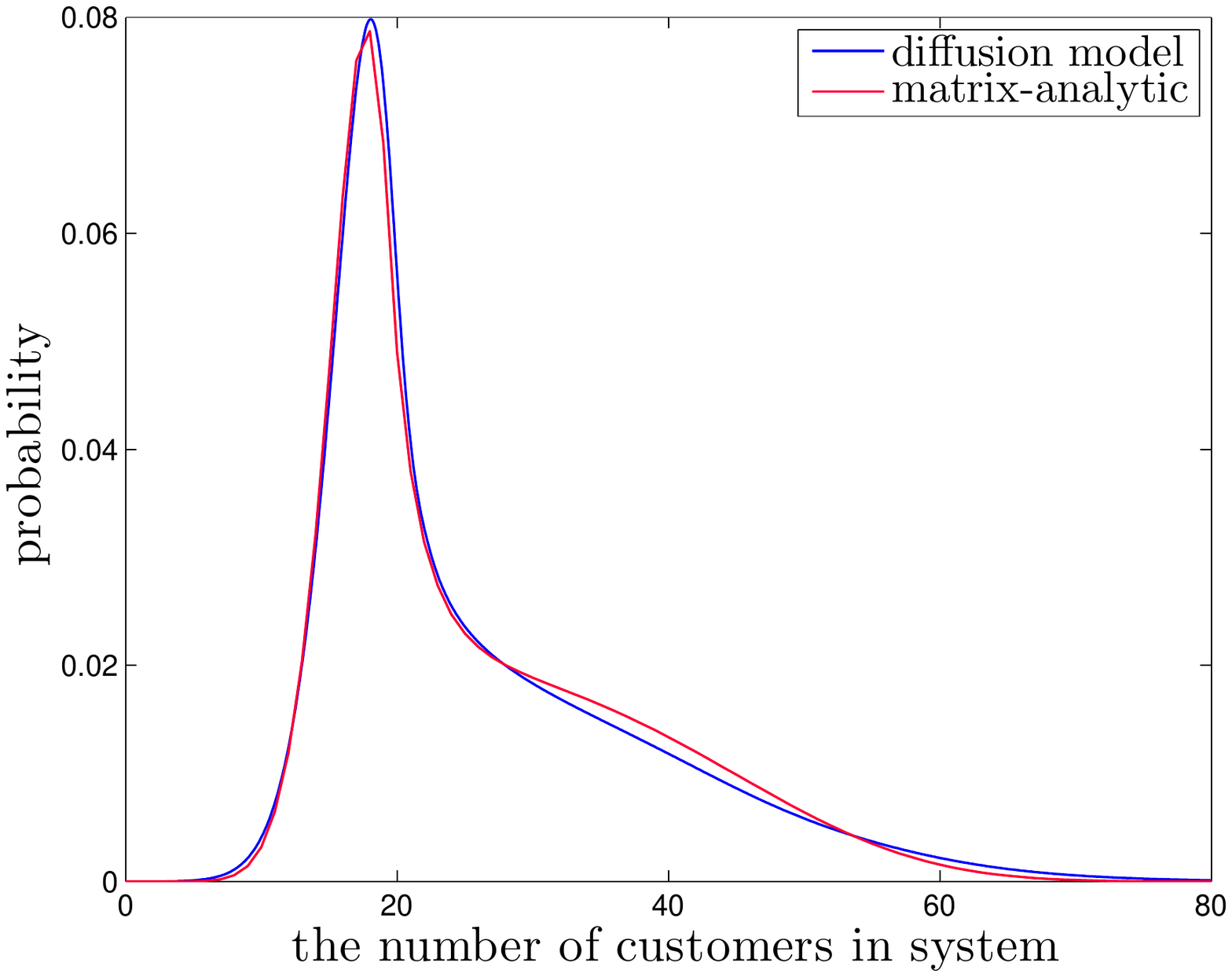}\caption{The stationary
distribution of the customer number in the $M/H_{2}/n+M$ queue, with
$\rho=1.112$ and $n=20$.}%
\label{fig:MH220M}%
\end{figure}

Consider an $M/H_{2}/n+M$ queue that has an exponential patience time
distribution. We are interested in such a queue because its customer-count
process $N=\{N(t):t\geq0\}$ is a quasi-birth-death process, where $N(t)$ is
the number of customers in system at time $t$. The stationary distribution of
that can be computed by the matrix-analytic method.

In this example, we take $\alpha=0.5$ for the rate of the exponential patience
time distribution and
\[
p=(0.9351,0.0649)^{\prime}\qquad\text{and}\qquad\nu=(9.354,0.072)^{\prime}%
\]
for the hyperexponential service time distribution. The mean service time of
the second-type customers is more than one hundred times longer than that of
the first type. Although over ninety percent of customers are of the first
type, the fraction of its workload is merely ten percent, i.e., $\gamma
=(0.1,0.9)^{\prime}$. Such a distribution has a large squared coefficient of
variation $c_{s}^{2}=24$.

The queue is approximated by the two-dimensional piecewise OU process $X$ in
(\ref{eq:model1}). Because the service time distribution is hyperexponential,
$P$ is a zero matrix and thus $R=\diag(\nu)$. By (\ref{eq:drift1}) and
(\ref{eq:covariance}), the drift coefficient of $X$ is
\begin{equation}
b(x)=%
\begin{pmatrix}
-p_{1}\mu\beta-\nu_{1}(x_{1}-p_{1}(x_{1}+x_{2})^{+})-p_{1}\alpha(x_{1}%
+x_{2})^{+}\\
-p_{2}\mu\beta-\nu_{2}(x_{2}-p_{2}(x_{1}+x_{2})^{+})-p_{2}\alpha(x_{1}%
+x_{2})^{+}%
\end{pmatrix}
\label{eq:b2}%
\end{equation}
and the covariance matrix of the diffusion coefficient is
\begin{equation}
\Sigma(x)=%
\begin{pmatrix}
p_{1}\mu(\rho+(\rho\wedge1)) & 0\\
0 & p_{2}\mu(\rho+(\rho\wedge1))
\end{pmatrix}
\label{eq:sig2}%
\end{equation}
for all $x\in\mathbb{R}^{2}$.

Three scenarios are considered in this example, in all of which the queue is
overloaded. In the first two scenarios, there are $n=50$ and $500$ servers,
respectively. The arrival rates are $\lambda=57.071$ and $522.36$, or
equivalently, $\rho=1.141$ and $1.045$. By (\ref{eq:beta}), $\beta=-1$ in both
scenarios. The third scenario, with $n=20$ servers, will be presented shortly.

To compute the stationary distribution of $X$, we use a product reference
density given by (\ref{eq:reference}) and (\ref{eq:Ref}). To generate basis
functions by the finite element method, we set the truncation rectangle
$K=[-7,32]\times\lbrack-7,32]$, which is obtained by (\ref{eq:eps0}) with
$\varepsilon_{0}=10^{-7}$, and use a lattice mesh in which all finite elements
are $0.5\times0.5$ squares.

Once the stationary density of $X$ is obtained, one can approximately produce
the distribution of $N(\infty)$, the stationary number of customers in system.
Note that the probability density of $X_{1}(\infty)+X_{2}(\infty)$ is given by%
\[
g_{N}(z)=\int_{-\infty}^{+\infty}g(x_{1},z-x_{1})\,\mathrm{d}x_{1}%
\qquad\text{for }z\in\mathbb{R}\text{.}%
\]
The distribution of $N(\infty)$\ can be approximated by%
\[
\mathbb{P}[N(\infty)=i]\approx\frac{1}{\sqrt{n}}g_{N}\Big(\frac{i-n}{\sqrt{n}%
}\Big)\qquad\text{for }i=0,1,\ldots
\]
For the first two scenarios, the distributions of $N(\infty)$ obtained by the
diffusion model are illustrated in Figure \ref{fig:MH2nM}. In the same figure,
the stationary distributions computed by the matrix-analytic method are
plotted, too. We see good agreement in Figure \ref{fig:MH2nM}. Comparing the
two scenarios, we also find out that the diffusion model in (\ref{eq:model1})
is more accurate when the number of servers $n$ is larger. This observation is
consistent with the many-server limit theorem for $G/Ph/n+GI$ queues in
\cite{DaiHeTezcan10}.

The matrix-analytic method can be used because in this queue, the
three-dimensional process $\{(Q(t),Z_{1}(t),Z_{2}(t)):t\geq0\}$ forms a
continuous-time Markov chain and the customer-count process $N$ is a
quasi-birth-death process. Clearly, $N(t)=Q(t)+Z_{1}(t)+Z_{2}(t)$. At time
$t$, $N$ is said to be at level $\ell$ if $N(t)=\ell$. In this example, level
$\ell$ consists of $\ell+1$ states if $\ell\leq n$ and it contains $n+1$
states if $\ell>n$. In the matrix-analytic method, the transition rate
matrices between adjacent levels are exploited to compute the stationary
distribution of $N$ iteratively. Each iteration requires $O(n^{3})$ arithmetic
operations. For this queue, the transition rate matrices at different levels
are different because the abandonment rate depends on the queue length. For
implementation purposes, we assume in the algorithm that at level $\ell
>\ell_{0}$ for some $\ell_{0}\gg n$, the abandonment rate at level $\ell$ is
$\alpha(\ell_{0}-n)$ rather than $\alpha(\ell-n)$. In other words, the
transition rate matrices at level $\ell$ are invariant with respect to $\ell
$\ when $\ell>\ell_{0}$. We take $\ell_{0}=n+2000$ in all numerical examples.
The extra error caused by this modification is negligible, because in this
queue, the queue length is in the order of $O(n^{1/2})$ and the chance of the
customer number exceeding $\ell_{0}$ is extremely rare.

To investigate the diffusion model in (\ref{eq:model1}) quantitatively, we
list some steady-state performance measures in Table \ref{tab:MH2nM}. They
include the mean queue length, the fraction of abandoned customers, and the
probabilities that the number of customers exceeds certain levels. Using the
diffusion model,
\[
\text{the mean queue length}\approx\sqrt{n}\int_{\mathbb{R}^{2}}(x_{1}%
+x_{2})^{+}g(x)\,\mathrm{d}x
\]
and%
\[
\text{the mean number of idle servers}\approx\sqrt{n}\int_{\mathbb{R}^{2}%
}(x_{1}+x_{2})^{-}g(x)\,\mathrm{d}x.
\]
It follows from the latter approximation that%
\[
\text{the abandonment fraction}\approx1-\frac{\mu}{\lambda}\Big(n-\sqrt{n}%
\int_{\mathbb{R}^{2}}(x_{1}+x_{2})^{-}g(x)\,\mathrm{d}x\Big).
\]
In the table, the tail probability $\mathbb{P}[N(\infty)>\ell]$ is
approximated by%
\[
\mathbb{P}[N(\infty)>\ell]\approx\mathbb{P}\Big[X_{1}(\infty)+X_{2}%
(\infty)>\frac{1}{\sqrt{n}}(\ell-n)\Big]\qquad\text{for }\ell=0,1,\ldots
\]
and $\mathbb{P}[X_{1}(\infty)+X_{2}(\infty)>(\ell-n)/\sqrt{n}]$ is computed
via (\ref{eq:Uz}). In both scenarios, the diffusion model produces
satisfactory numerical estimates.

The computational complexity of the proposed algorithm, whether in computation
time or in memory space, does not change with the number of servers $n$. In
contrast, the matrix-analytic method becomes computationally expensive when
$n$ is large. In particular, the memory usage becomes a serious constraint
when a huge number of iterations are required. For the $n=500$ scenario in
this example, it took around one hour to finish the matrix-analytic
computation and the peak memory usage is nearly five gigabytes. Using the
diffusion model and the proposed algorithm, it took less than one minute and
the peak memory usage is less than two hundred megabytes on the same computer.
See Section~\ref{sec:complexity} for more discussion on the computational complexity.

Although the diffusion model is motivated and derived from the theory of
many-server queues, it is still relevant for a queue with a modest number of
servers. In the third scenario, there are $n=20$ servers and the arrival rate
is $\lambda=22.24$. Thus, $\rho=1.112$ and $\beta=-0.5$. In the proposed
algorithm, we keep the same truncation rectangle and lattice mesh as in the
previous two scenarios, and the reference density is again from
(\ref{eq:reference}) and (\ref{eq:Ref}). As illustrated in Figure
\ref{fig:MH220M}, the diffusion model can still capture the exact stationary
distribution for a queue with as few as twenty servers.

\subsection{Example 2: an $M/H_{2}/n$ queue}

\label{sec:Example2}

\begin{figure}[t]
\centering
\subfloat[$\rho=0.8586$ and $n=50$] {\label{fig:MH250}
\includegraphics[width=2.45in]{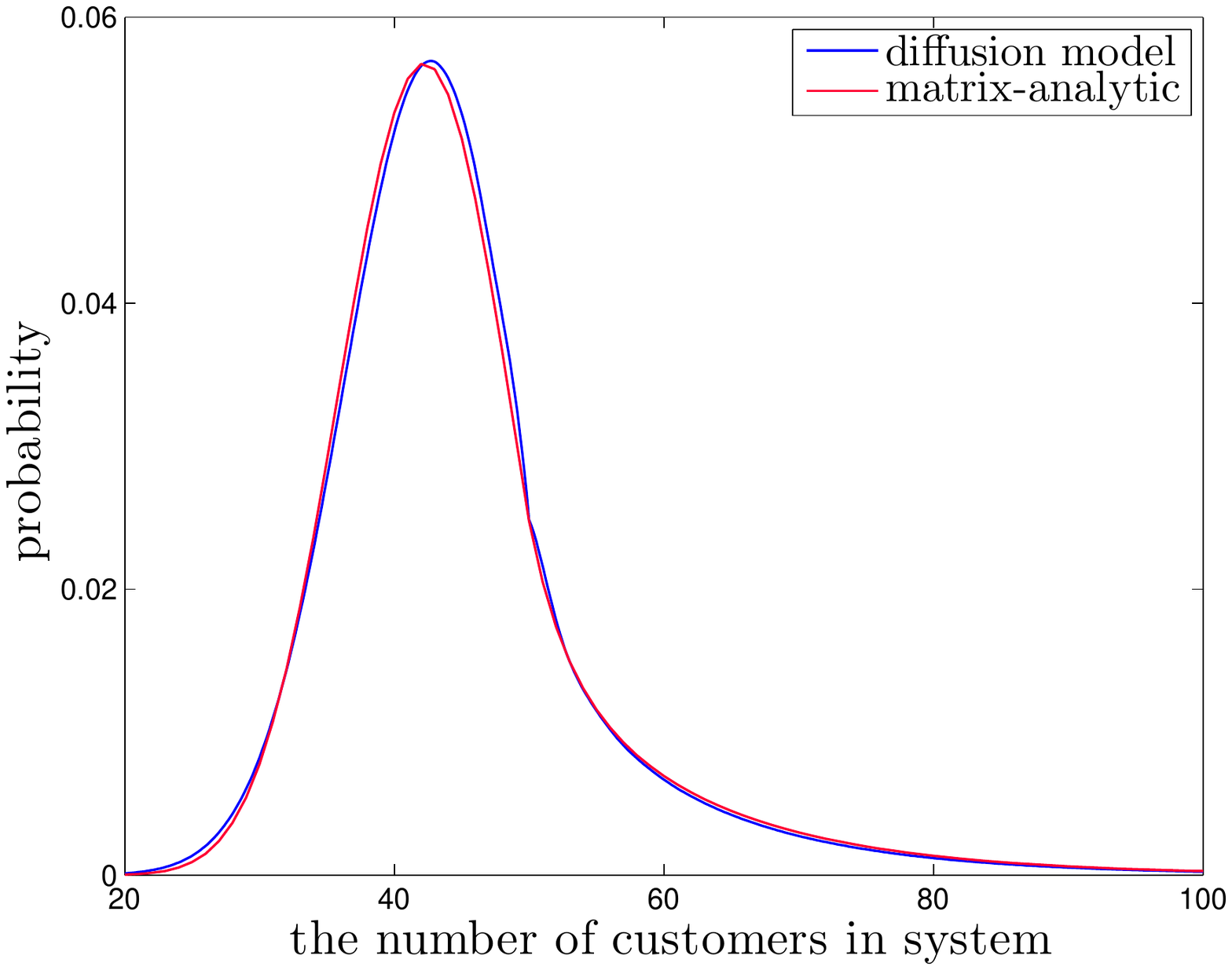}}
\subfloat[$\rho=0.9553$ and $n=500$] {\label{fig:MH2500}
\includegraphics[width=2.45in]{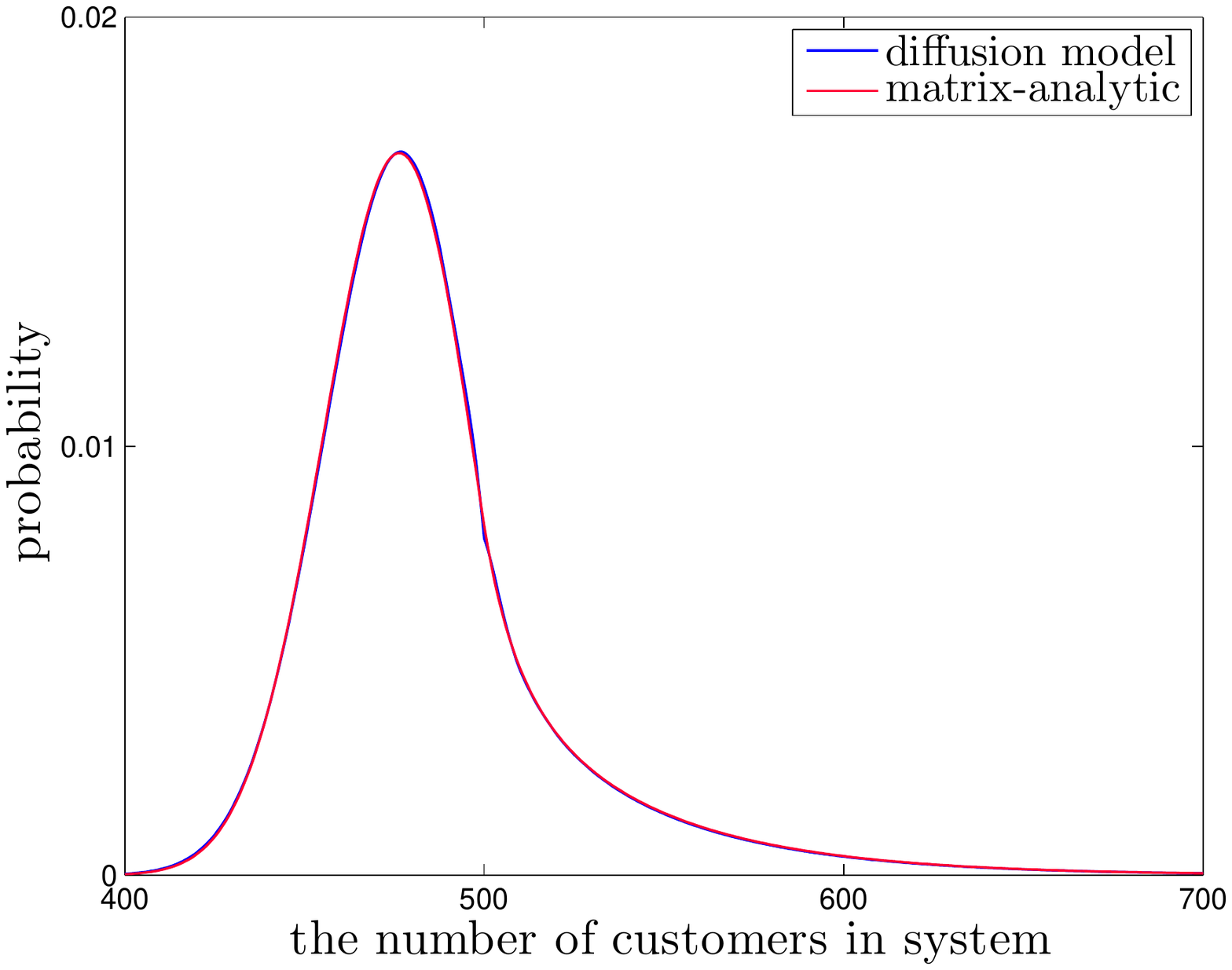}}\caption{The stationary distribution
of the customer number in the $M/H_{2}/n $ queue.}%
\label{fig:MH2n}%
\end{figure}

\begin{figure}[t]
\centering
\includegraphics[width=2.45in]{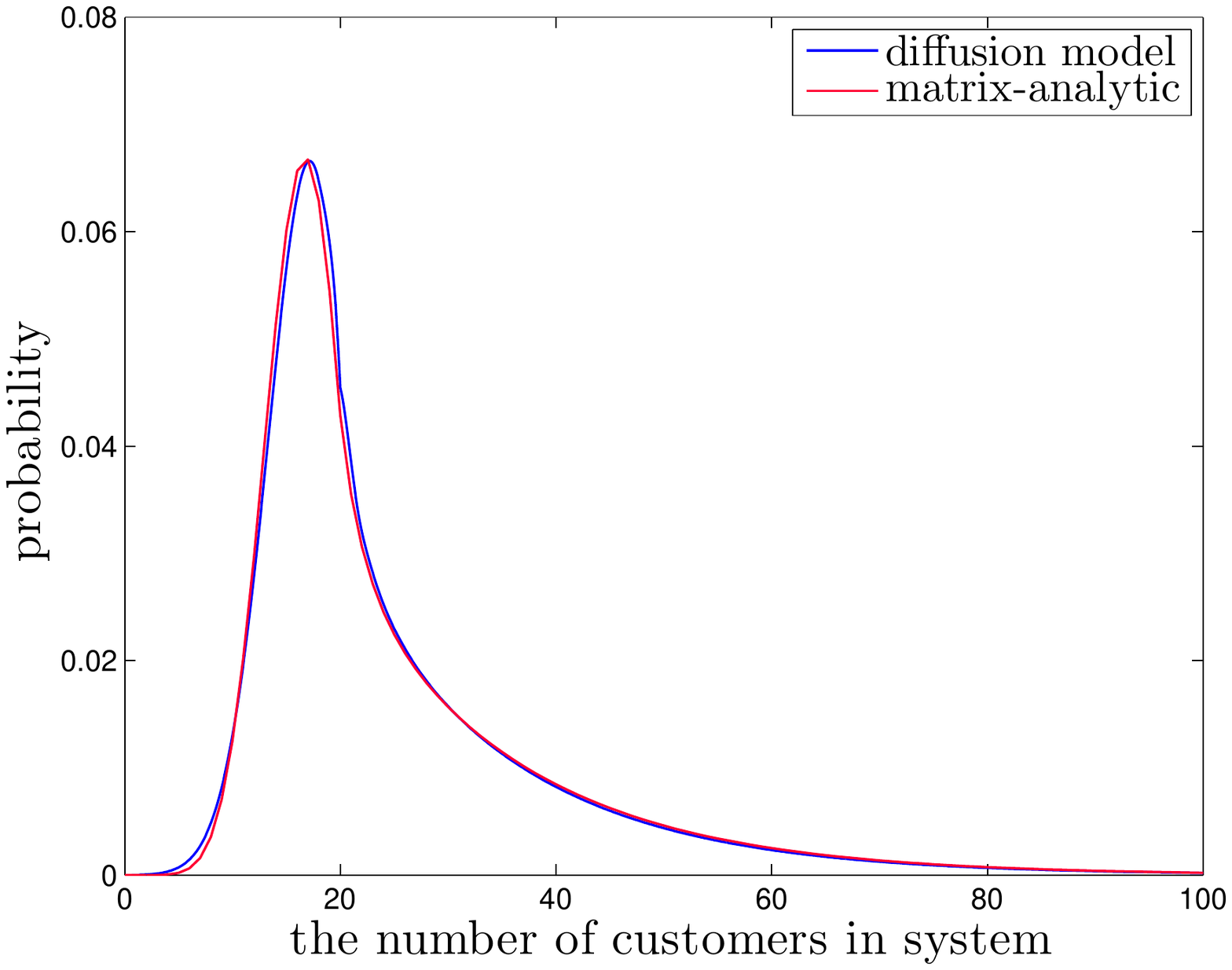}\caption{The stationary
distribution of the customer number in the $M/H_{2}/n $ queue, with
$\rho=0.8882$ and $n=20$.}%
\label{fig:MH220}%
\end{figure}

\begin{table}[t]
\centering
\subfloat[$\rho=0.8586$ and $n=50$]{\label{tab:MH250}
\begin{tabular}
[c]{l|ll}
& Model (\ref{eq:model1}) & Matrix-analytic \\ \hline
Mean queue length & $2.267$ & $2.419$\\
$\mathbb{P}[N(\infty)>40]$ & $0.6908$ & $0.6578$ \\
$\mathbb{P}[N(\infty)>50]$ & $0.2072$ & $0.2012$ \\
$\mathbb{P}[N(\infty)>70]$ & $0.03395$ & $0.03655$ \\
$\mathbb{P}[N(\infty)>100]$ & $0.003537$ & $0.003494$
\end{tabular}}
\par
\subfloat[$\rho=0.9553$ and $n=500$]{\label{tab:MH2500}
\begin{tabular}
[c]{l|ll}
& Model (\ref{eq:model1}) & Matrix-analytic \\ \hline
Mean queue length & $8.753$ & $8.800$\\
$\mathbb{P}[N(\infty)>450]$ & $0.9038$ & $0.9005$ \\
$\mathbb{P}[N(\infty)>500]$ & $0.2285$ & $0.2263$ \\
$\mathbb{P}[N(\infty)>600]$ & $0.01910$ & $0.01908$ \\
$\mathbb{P}[N(\infty)>700]$ & $0.002241$ & $0.001903$
\end{tabular}}\caption{Performance measures of the $M/H_{2}/n$ queue.}%
\label{tab:MH2n}%
\end{table}

In this example, an $M/H_{2}/n$ queue without abandonment is considered. The
hyperexponential service time distribution has%
\[
p=(0.5915,0.4085)^{\prime}\qquad\text{and}\qquad\nu=(5.917,0.454)^{\prime
}\text{.}%
\]
Thus, $c_{s}^{2}=3$ and $\gamma=(0.1,0.9)^{\prime}$. Since there is no
abandonment, we must take $\rho<1$ in order for the system to reach a steady state.

The diffusion model in (\ref{eq:model1}) with $\alpha=0$ is used. The drift
and the diffusion coefficients of $X$ are given by (\ref{eq:b2}) and
(\ref{eq:sig2}). The first scenario has $n=50$ servers and the second scenario
has $n=500$ servers. The respective arrival rates are $\lambda=42.929$ and
$477.64$. Hence, $\rho=0.8586$ and $0.9553$, both yielding $\beta=1$. The
product reference density is given by (\ref{eq:reference}) and
(\ref{eq:RefAlpha0}). With $\varepsilon_{0}=10^{-7}$, the truncation rectangle
is set by (\ref{eq:eps0}) to be $K=[-7,35]\times\lbrack-7,35]$, which is
divided into $0.5\times0.5$ finite elements.

The stationary distribution of the number of customers in system is shown in
Figure~\ref{fig:MH2n}. In both scenarios, the diffusion model produces a good
approximation of the result by the matrix-analytic method. As in the previous
example, the diffusion model is more accurate when the system scale is larger.
Several performance measures in steady state are listed in Table
\ref{tab:MH2n}. As in Table~\ref{tab:MH2nM}, satisfactory agreement can be
found between the two approaches.

The third scenario has $n=20$ servers with arrival rate $\lambda=17.76$. Then,
$\rho=0.8882$ and $\beta=0.5$. With $\varepsilon_{0}=10^{-7}$, the truncation
rectangle is taken to be $K=[-7,79]\times\lbrack-7,79]$. The lattice mesh
consists of $0.5\times0.5$ finite elements. The distribution of $N(\infty)$ is
shown in Figure~\ref{fig:MH220}. For a queue without abandonment, the
diffusion model is still useful when the number of servers is modest.

\subsection{Example 3: an $M/H_{2}/n+E_{k}$ queue}

\label{sec:Example3}

\begin{table}[tbh]
\centering
\subfloat[$\rho=0.8586$ and $n=50$]{\label{tab:MH250EU}
\begin{tabular}
[c]{l|ll|ll}
& \multicolumn{2}{c}{$+E_{2}$} & \multicolumn{2}{|c}{$+E_{3}$}\\ \cline{2-5}
& Model (\ref{eq:model2}) & Simulation & Model (\ref{eq:model2}) & Simulation \\ \hline
Mean queue length & $0.9820$ & $1.061$ & $1.201$ & $1.302$\\
Abandonment fraction & $0.007974$ & $0.008592$ & $0.005629$ & $0.006115$\\
$\mathbb{P}[N(\infty)>35]$ & $0.8881$ & $0.8745$ & $0.8896$ & $0.8762$\\
$\mathbb{P}[N(\infty)>40]$ & $0.6755$ & $0.6399$ & $0.6798$ & $0.6448$\\
$\mathbb{P}[N(\infty)>50]$ & $0.1671$ & $0.1581$ & $0.1788$ & $0.1707$\\
$\mathbb{P}[N(\infty)>60]$ & $0.03238$ & $0.03353$ & $0.04420$ & $0.04584$
\end{tabular}}
\par
\subfloat[$\rho=0.9553$ and $n=500$]{\label{tab:MH2500EU}
\begin{tabular}
[c]{l|ll|ll}
& \multicolumn{2}{c}{$+E_{2}$} & \multicolumn{2}{|c}{$+E_{3}$}\\ \cline{2-5}
& Model (\ref{eq:model2}) & Simulation & Model (\ref{eq:model2}) & Simulation \\ \hline
Mean queue length & $4.960$ & $5.048$ & $6.455$ & $6.569$\\
Abandonment fraction & $0.001689$ & $0.001729$ & $0.0007611$ & $0.0007931$\\
$\mathbb{P}[N(\infty)>450]$ & $0.9003$ & $0.8964$ & $0.9022$ & $0.8984$\\
$\mathbb{P}[N(\infty)>480]$ & $0.4759$ & $0.4643$ & $0.4859$ & $0.4746$\\
$\mathbb{P}[N(\infty)>500]$ & $0.1995$ & $0.1966$ & $0.2151$ & $0.2124$\\
$\mathbb{P}[N(\infty)>550]$ & $0.02798$ & $0.02841$ & $0.04412$ & $0.04458$
\end{tabular}}\caption{Performance measures of the $M/H_{2}/n+E_{k}$ queue
with $\rho<1$.}%
\label{tab:MH2nEU}%
\end{table}

\begin{table}[tbh]
\centering
\subfloat[$\rho=1.141$ and $n=50$]{\label{tab:MH250EO}
\begin{tabular}
[c]{l|ll|ll}
& \multicolumn{2}{c}{$+E_{2}$} & \multicolumn{2}{|c}{$+E_{3}$}\\ \cline{2-5}
& Model (\ref{eq:model2}) & Simulation & Model (\ref{eq:model2}) & Simulation \\ \hline
Mean queue length & $15.03$ & $14.94$ & $19.44$ & $19.31$\\
Abandonment fraction & $0.1332$ & $0.1334$ & $0.1303$ & $0.1305$\\
$\mathbb{P}[N(\infty)>45]$ & $0.9568$ & $0.9490$  & $0.9704$ & $0.9645$ \\
$\mathbb{P}[N(\infty)>50]$ & $0.8780$ & $0.8648$  & $0.9169$ & $0.9066$\\
$\mathbb{P}[N(\infty)>70]$ & $0.3325$ & $0.3121$  & $0.5037$ & $0.4761$\\
$\mathbb{P}[N(\infty)>90]$ & $0.008153$ & $0.009354$ & $0.03033$ & $0.03422$
\end{tabular}}
\par
\subfloat[$\rho=1.045$ and $n=500$]{\label{tab:MH2500EO}
\begin{tabular}
[c]{l|ll|ll}
& \multicolumn{2}{c}{$+E_{2}$} & \multicolumn{2}{|c}{$+E_{3}$}\\ \cline{2-5}
& Model (\ref{eq:model2}) & Simulation & Model (\ref{eq:model2}) & Simulation \\ \hline
Mean queue length & $76.50$ & $76.20$ & $119.5$ & $119.1$\\
Abandonment fraction & $0.04438$ & $0.04437$ & $0.04340$ & $0.04337$\\
$\mathbb{P}[N(\infty)>480]$ & $0.9857$ & $0.9846$ & $0.9946$ & $0.9940$\\
$\mathbb{P}[N(\infty)>500]$ & $0.9390$ & $0.9363$ & $0.9770$ & $0.9756$\\
$\mathbb{P}[N(\infty)>600]$ & $0.3115$ & $0.3051$ & $0.6733$ & $0.6645$\\
$\mathbb{P}[N(\infty)>700]$ & $0.0009757$ & $0.0009658$ & $0.04260$ & $0.04358$
\end{tabular}}\caption{Performance measures of the $M/H_{2}/n+E_{k}$ queue
with $\rho>1$.}%
\label{tab:MH2nEO}%
\end{table}

Consider an $M/H_{2}/n+E_{k}$ queue, where $k>1$ is a positive integer and
$+E_{k}$ signifies an Erlang-$k$ patience time distribution. In this queue,
each patience time is the sum of $k$ stages and the stages are iid having an
exponential distribution with mean $1/\theta$. When $k>1$, the probability
density at zero of an Erlang-$k$ distribution is zero. The diffusion model in
(\ref{eq:model1}) does not have a stationary distribution when the queue is
overloaded. Hence, we evaluate the diffusion model in (\ref{eq:model2}) that
exploits the patience time hazard rate scaling. In the following numerical
experiments, we take $k=2$ or $3$ for the Erlang-$k$ distribution and set
$\theta=k$, so the mean patience time is one unit time. The hyperexponential
service time distribution is taken to be identical to that in
Section~\ref{sec:Example2}.

The hazard rate function of the Erlang-$k$ distribution is%
\[
h(t)=\frac{\theta^{k}t^{k-1}}{(k-1)!\sum\limits_{\ell=0}^{k-1}\dfrac
{\theta^{\ell}t^{\ell}}{\ell!}}\qquad\text{for }t\geq0\text{.}%
\]
For the diffusion model (\ref{eq:model2}), it follows from (\ref{eq:drift2})
that the drift coefficient of $X$ is
\begin{equation}
b(x)=%
\begin{pmatrix}
-p_{1}\mu\beta-\nu_{1}(x_{1}-p_{1}(x_{1}+x_{2})^{+})-p_{1}\eta((x_{1}%
+x_{2})^{+})\\
-p_{2}\mu\beta-\nu_{2}(x_{2}-p_{2}(x_{1}+x_{2})^{+})-p_{2}\eta((x_{1}%
+x_{2})^{+})
\end{pmatrix}
\label{eq:b}%
\end{equation}
where
\[
\eta(z)=\int_{0}^{z}h\Big(\frac{\sqrt{n}u}{\lambda}\Big)\,\mathrm{d}u=\theta
z-\frac{\lambda}{\sqrt{n}}\log\bigg(\sum_{m=0}^{k-1}\frac{n^{m/2}\theta
^{m}z^{m}}{m!\lambda^{m}}\bigg)\qquad\text{for }z\geq0\text{.}%
\]

The first two scenarios has $n=50$ and $500$ servers, respectively. Their
respective arrival rates are $\lambda=42.929$ and $477.64$. Hence,
$\rho=0.8586$ and $0.9553$, both leading to $\beta=1$. In the proposed
algorithm, the reference density is chosen according to (\ref{eq:reference})
and (\ref{eq:RefAlpha0}). The truncation rectangle is taken to be
$K=[-7,35]\times\lbrack-7,35]$ and is divided into $0.5\times0.5$ finite
elements. Some performance estimates can be found in Table \ref{tab:MH2nEU}.

The third and fourth scenarios are for the case $\rho>1$. They have $n=50$ and
$500$ servers, and arrival rates $\lambda=57.071$ and $522.36$, respectively.
Then, $\rho=1.141$ and $1.045$, both having $\beta=-1$. For these two
scenarios, we adopt the reference density in (\ref{eq:reference}) and
(\ref{eq:refhaz}). When $k=2$, each patience time has two stages. The hazard
rate function of the patience time distribution has $h(0)=0$ and
$h^{(1)}(0)=\theta^{2}$, so $\ell_{0}=1$ in (\ref{eq:q0}) and (\ref{eq:alpha}%
). Because $\alpha$ in (\ref{eq:alpha}) depends on $n$, both the reference
density and the truncation rectangle change with $n$. With $\varepsilon
_{0}=10^{-7}$, the truncation rectangle is set to be $K=[-7,13]\times
\lbrack-7,13]$ for $n=50$ and to be $K=[-7,16]\times\lbrack-7,16]$ for
$n=500$. When $k=3$, a patience time consists of three stages. In this case,
$h(0)=h^{(1)}(0)=0$ and $h^{(2)}(0)=8\theta^{3}$, so $\ell_{0}=2$. We set
$K=[-7,11]\times\lbrack-7,11]$ for $n=50$ and $K=[-7,15]\times\lbrack-7,15]$
for $n=500$. All truncation rectangles are partitioned into $0.5\times0.5$
finite elements. The performance estimates are listed in Table
\ref{tab:MH2nEO}.

To evaluate the diffusion model (\ref{eq:model2}), we list corresponding
simulation estimates of the performance measures in both tables. As in the
previous examples, the diffusion model produces adequate performance approximations.

Theoretically, the matrix-analytic method can be used in this example as the
customer-count process $N$ is also a quasi-birth-death process. But it is
impractical because the computational complexity is too high. Consider the
case $k=2$. Let $V_{1}(t)$ and $V_{2}(t)$ be the respective numbers of waiting
customers whose patience times are in the first and in the second stage at
time $t$. For this $M/H_{2}/n+E_{2}$ queue, the four-dimensional process
$\{(V_{1}(t),V_{2}(t),Z_{1}(t),Z_{2}(t)):t\geq0\}$ is a continuous-time Markov
chain. At level $\ell$, there are $\ell+1$ states if $\ell\leq n$ and there
are $(n+1)(\ell-n+1)$ states if $\ell>n$. The number of states at level $\ell$
is formidable when $\ell$ is large. Even if we may truncate the state space
using the technique described in Section~\ref{sec:Example1}, the number of
states is still too large to apply the matrix-analytic method. In fact, we are
not aware of any other numerical methods other than simulation that can
produce approximations in Tables~\ref{tab:MH2nEU} and~\ref{tab:MH2nEO}.

\subsection{Example 4: an $M/H_{2}/n+H_{2}$ queue}

\label{sec:Example4}

\begin{table}[t]
\centering
\subfloat[$\rho=1.141$ and $n=50$]{\label{tab:MH250H2}
\begin{tabular}
[c]{l|lll}
& Model (\ref{eq:model1}) & Model (\ref{eq:model2}) & Simulation \\ \hline
Mean queue length & $0.4709$ & $4.869$ & $4.845$\\
Abandonment fraction & $0.1714$ & $0.1504$ & $0.1499$\\
$\mathbb{P}[N(\infty)>40]$ & $0.9578$ & $0.9749$ & $0.9728$\\
$\mathbb{P}[N(\infty)>50]$ & $0.3158$ & $0.6377$ & $0.6111$\\
$\mathbb{P}[N(\infty)>60]$ & $1.044\times10^{-7}$ & $0.1895$ & $0.1737$\\
$\mathbb{P}[N(\infty)>70]$ & $1.097\times10^{-11}$ & $0.02568$ & $0.02142$
\end{tabular}}
\par
\subfloat[$\rho=1.045$ and $n=500$]{\label{tab:MH2500H2}
\begin{tabular}
[c]{l|lll}
& Model (\ref{eq:model1}) & Model (\ref{eq:model2}) & Simulation \\ \hline
Mean queue length & $1.475$ & $6.359$ & $6.413$\\
Abandonment fraction & $0.05863$ & $0.05517$ & $0.05512$\\
$\mathbb{P}[N(\infty)>480]$ & $0.8663$ & $0.8929$ & $0.8881$\\
$\mathbb{P}[N(\infty)>500]$ & $0.3192$ & $0.4822$ & $0.4720$\\
$\mathbb{P}[N(\infty)>520]$ & $9.274\times10^{-5}$ & $0.1074$ & $0.1050$\\
$\mathbb{P}[N(\infty)>550]$ & $-4.488\times10^{-9}$ & $0.006616$ & $0.006248$
\end{tabular}}\caption{Performance measures of the $M/H_{2}/n+H_{2}$ queue.}%
\label{tab:MH2nH2}%
\end{table}

Let us consider an example in which the patience time hazard rate function
changes rapidly near the origin. As pointed out by \cite{ReedTezcan09}, the
performance of such a queue is sensitive to the patience time distribution in
a neighborhood of zero. A model that exploits the patience time density at
zero solely may not produce adequate performance estimates. In this example,
the patience times follow a two-phase hyperexponential distribution that has%
\[
\hat{p}=(0.9,0.1)^{\prime}\qquad\text{and}\qquad\hat{\nu}=(1,200)^{\prime
}\text{.}%
\]
In other words, there are two types of patience times. Ninety percent of
patience times are exponentially distributed with mean one and ten percent are
exponentially distributed with mean $0.005$. We take the same hyperexponential
service time distribution as in Sections~\ref{sec:Example2}
and~\ref{sec:Example3}.

The hazard rate function of the hyperexponential patience time distribution is%
\[
h(t)=\frac{\hat{p}_{1}\hat{\nu}_{1}\exp(-\hat{\nu}_{1}t)+\hat{p}_{2}\hat{\nu
}_{2}\exp(-\hat{\nu}_{2}t)}{\hat{p}_{1}\exp(-\hat{\nu}_{1}t)+\hat{p}_{2}%
\exp(-\hat{\nu}_{2}t)}\qquad\text{for }t\geq0\text{.}
\]
The drift coefficient of $X$ in (\ref{eq:model2}) is also given by
(\ref{eq:b}) where%
\begin{align*}
\eta(z) &  =\int_{0}^{z}h\Big(\frac{\sqrt{n}u}{\lambda}\Big)\,\mathrm{d}u\\
&  =-\frac{\lambda}{\sqrt{n}}\log\Big(\hat{p}_{1}\exp\Big(-\frac{\sqrt{n}
}{\lambda}\hat{\nu}_{1}z\Big)+\hat{p}_{2}\exp\Big(-\frac{\sqrt{n}}{\lambda
}\hat{\nu}_{2}z\Big)\Big)\qquad\text{for }z\geq0\text{.}%
\end{align*}
In this example, we have
\[
h(0)=\hat{p}_{1}\hat{\nu}_{1}+\hat{p}_{2}\hat{\nu}_{2}=20.9\qquad
\text{and\qquad}h^{(1)}(0)=-\hat{p}_{1}\hat{p}_{2}(\hat{\nu}_{1}-\hat{\nu}%
_{2})^{2}=-3564.1.
\]
Thus, $\ell_{0}=0$ and the hazard rate function has a steep slope near the
origin. Since the zeroth degree Taylor's approximation in (\ref{eq:Taylor})
may bring on too much error, the reference density exploiting the lowest-order
nonzero derivative at the origin could be erroneous.

To choose an appropriate reference density, an auxiliary queue is used again.
As in Section~\ref{sec:densityHaz}, the auxiliary queue is an $M/H_{2}/n+M$
queue that shares the same arrivals and service times with the $M/H_{2}%
/n+H_{2}$ queue. Let $\alpha>0$ be the rate of the exponential patience time
distribution. We take $\alpha=\hat{\nu}_{1}\wedge\hat{\nu}_{2}$ so that the
patience times in the auxiliary queue are all of the type with the longer
mean. If the queue lengths are equal, the abandonment rate in the auxiliary
queue must be lower than that in the original queue. Therefore, the queue
length decays slower in the former queue and the reference density for model
(\ref{eq:model1}) of the auxiliary queue should work. This observation leads
to a reference density that follows (\ref{eq:reference}) and (\ref{eq:refhaz}%
), but in this example, we take $\alpha=\hat{\nu}_{1}\wedge\hat{\nu}_{2}$ and
solve (\ref{eq:balance}) to find $q_{0}$.

Two scenarios with $n=50$ and $500$ servers are investigated. The respective
arrival rates are $\lambda=57.071$ and $522.36$. Thus, $\rho=1.141$ and
$1.045$ and both scenarios have $\beta=-1$. By solving (\ref{eq:balance}), we
have $q_{0}=0.165$ for the first scenario and $q_{0}=0.0059$ for the second
scenario. The reference density follows (\ref{eq:reference}) and
(\ref{eq:refhaz}) with $\alpha=\hat{\nu}_{1}=1$. With $\varepsilon_{0}%
=10^{-7}$, the truncation rectangle is $K=[-7,9]\times\lbrack-7,9]$,
partitioned into $0.5\times0.5$ finite elements. The performance estimates
obtained by the diffusion model (\ref{eq:model2}) are compared with the
simulation results in Table \ref{tab:MH2nH2}. The performance estimates are
still quite accurate.

We also put the performance estimates produced by the diffusion model
(\ref{eq:model1}) in this table. For this model, the reference density follows
(\ref{eq:reference}) and (\ref{eq:Ref}) with $\alpha=h(0)=20.9$. In the
proposed algorithm, the mesh for model (\ref{eq:model2}) is used again.
Because in this example, using the patience time density at zero solely cannot
capture the behavior of the abandonment process, model (\ref{eq:model1}) fails
to produce proper performance estimates.

\section{Implementation issues}

\label{sec:Implementation}

The proposed algorithm was implemented using the
\textrm{C\raise.3ex\hbox{\small++}} programming language. The package was
tested on both Linux and Windows platforms. In this section, we discuss
several important issues arising from the implementation. They are crucial for
using the algorithm to solve practical problems. To demonstrate these issues,
the second scenario with $n=500$ servers in Section~\ref{sec:Example1} is
investigated throughout this section. The diffusion model (\ref{eq:model1}) is
used to approximate the $M/H_{2}/n+M$ queue.

\subsection{Influence of the reference density}

\label{sec:InfluenceReference}

\begin{figure}[t]
\centering
\subfloat[$K={[-7,10]\times [-7,10]}$] {\label{fig:MH2500naive1}
\includegraphics[width=2.45in]{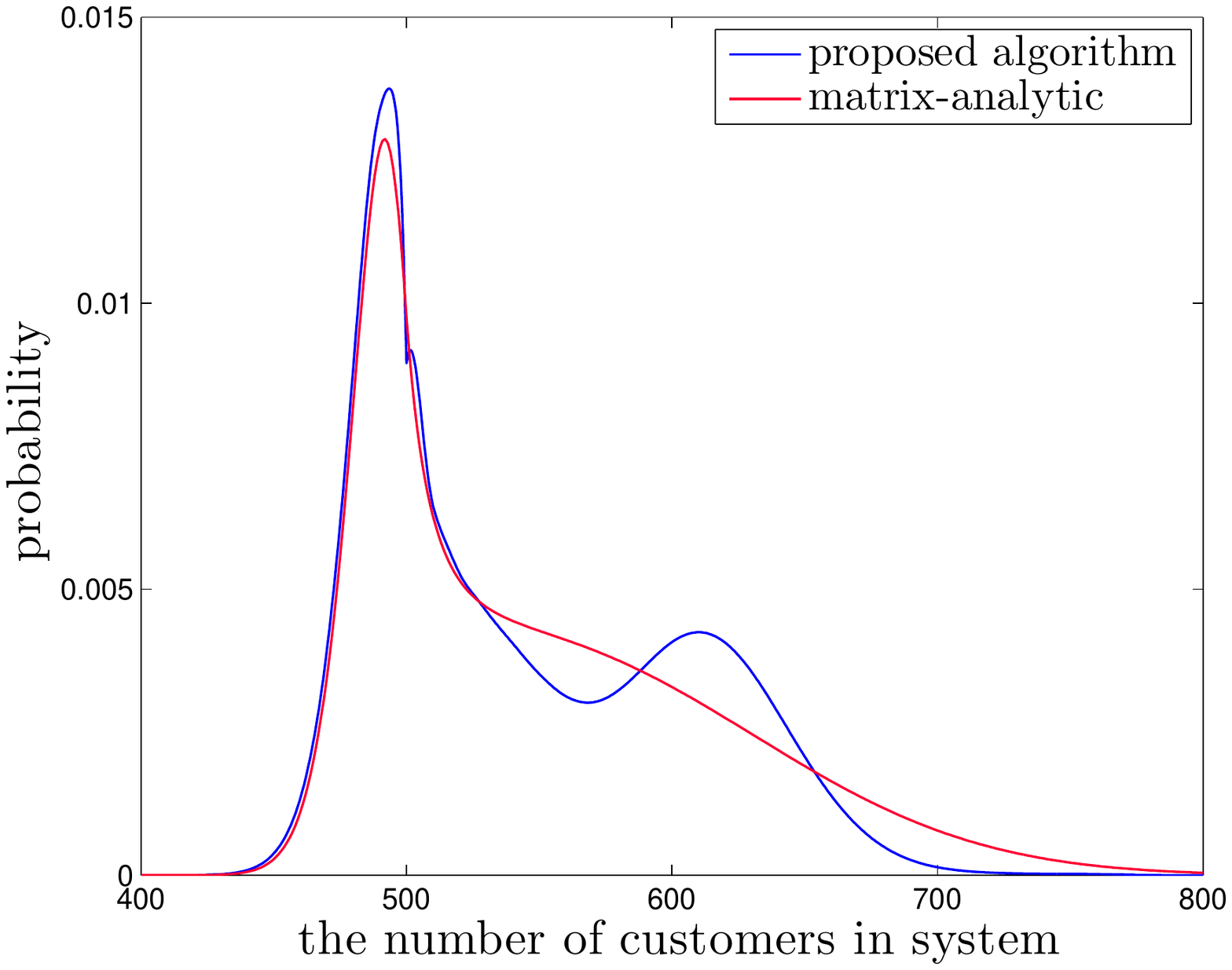}}
\subfloat[$K={[-7,32]\times [-7,32]}$] {\label{fig:MH2500naive2}
\includegraphics[width=2.45in]{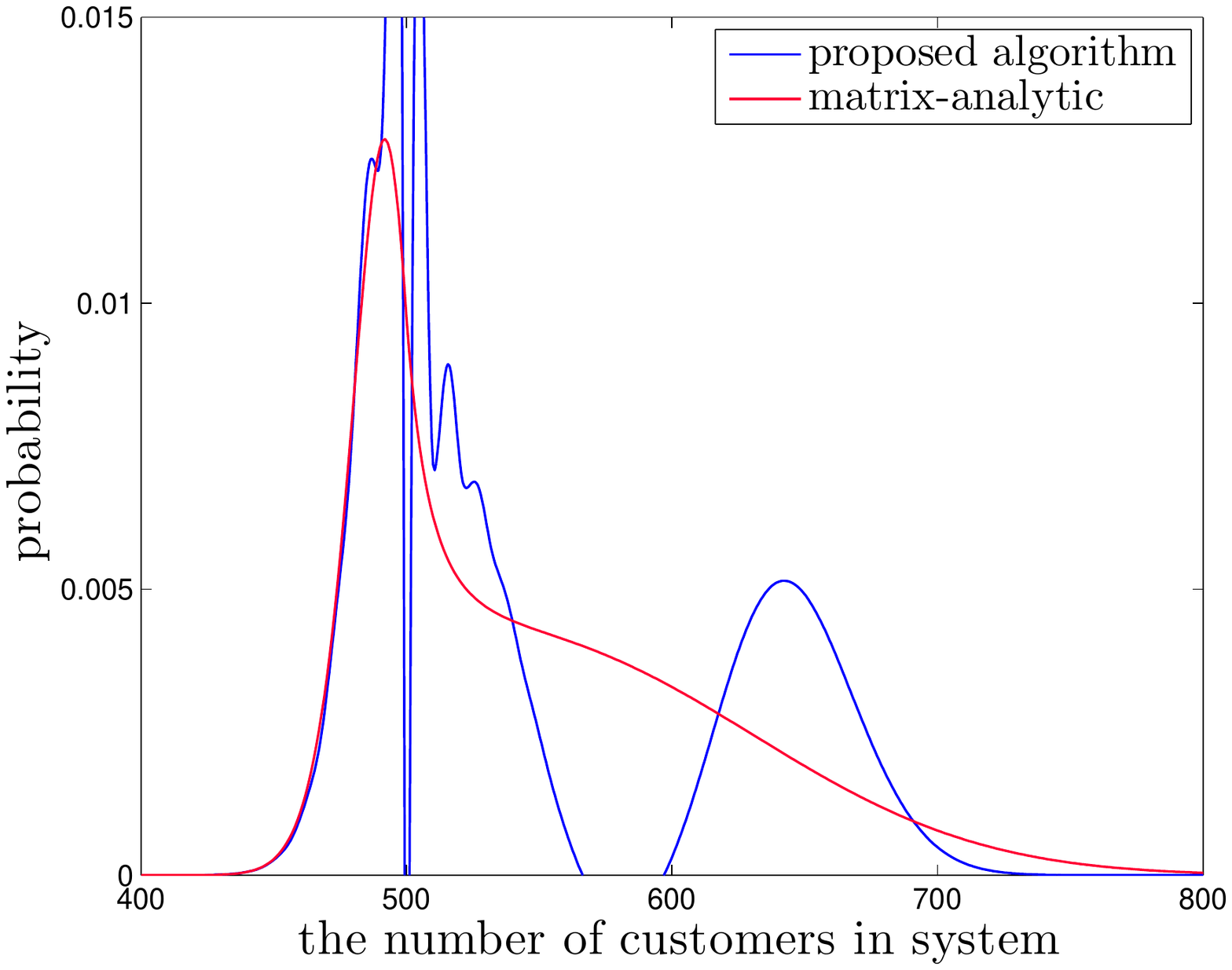}}\caption{The output of the
proposed algorithm with the ``naive'' reference density.}%
\label{fig:MH2nNaive}%
\end{figure}

The reference density plays a key role in the algorithm. If the function $r$
does not satisfy (\ref{eq:ref}), the sequence of spaces $\{H_{k}%
:k\in\mathbb{N}\}$ may not converge to $H$ in $L^{2}(\mathbb{R}^{d},r)$ and
the output of the algorithm may significantly deviate from the exact
stationary density. To demonstrate this issue, let us consider a
\textquotedblleft naive\textquotedblright\ reference density.

To produce a \textquotedblleft naive\textquotedblright\ reference density, we
consider a queue that has the same arrival process and patience time
distribution as the $M/H_{2}/n+M$ queue. This new queue has an exponential
service time distribution and its mean service time is equal to that of the
$M/H_{2}/n+M$ queue. For this $M/M/n+M$ queue, the diffusion model
(\ref{eq:model1}) is a one-dimensional piecewise OU process whose stationary
density is given by (\ref{eq:GMnGI}). The \textquotedblleft
naive\textquotedblright\ reference density is a product reference density in
(\ref{eq:reference}) with each $r_{j}$ being the stationary density in
(\ref{eq:GMnGI}). In other words, the \textquotedblleft
naive\textquotedblright\ reference density is obtained by pretending the
service time distribution to be exponential.

Let us apply the \textquotedblleft naive\textquotedblright\ reference density.
With $\varepsilon_{0}=10^{-7}$, the truncation rectangle is set to be
$K=[-7,10]\times\lbrack-7,10]$ and is partitioned into $0.5\times0.5$ finite
elements. As shown in Figure~\ref{fig:MH2500naive1}, the output of the
proposed algorithm noticeably deviates from the exact stationary distribution.
To further confirm that the \textquotedblleft naive\textquotedblright%
\ reference density cannot work, we next test the truncation rectangle
$K=[-7,32]\times\lbrack-7,32]$, which is used in Section~\ref{sec:Example1}
along with the proposed\ reference density. In this case, the matrix $A$ in
(\ref{eq:equation}) is close to singular and its condition number is
$3.52\times10^{190}$. Figure~\ref{fig:MH2500naive2} manifests the severe error
in the algorithm output.

Recall that in this example, the hyperexponential service time distribution
has $c_{s}^{2}=24$. Comparing (\ref{eq:GMnGI}) with (\ref{eq:Ref}), we can
tell that the decay rate of the \textquotedblleft naive\textquotedblright%
\ reference density is much larger than that of the proposed reference
density. If Conjecture~\ref{conj:TailModel1} is true, one can expect that the
\textquotedblleft naive\textquotedblright\ reference density decays much
faster than the stationary density and the second condition in (\ref{eq:ref})
may not hold. In this case, the ratio function $q$ is no longer in
$L^{2}(\mathbb{R}^{d},r)$ and consequently, the algorithm fails to produce any
adequate estimate of the ratio function.

\subsection{Mesh selection}

\label{sec:Mesh}

\begin{figure}[t]
\centering
\subfloat[$1.0\times 1.0$ finite elements] {\label{fig:FE1}
\includegraphics[width=2.45in]{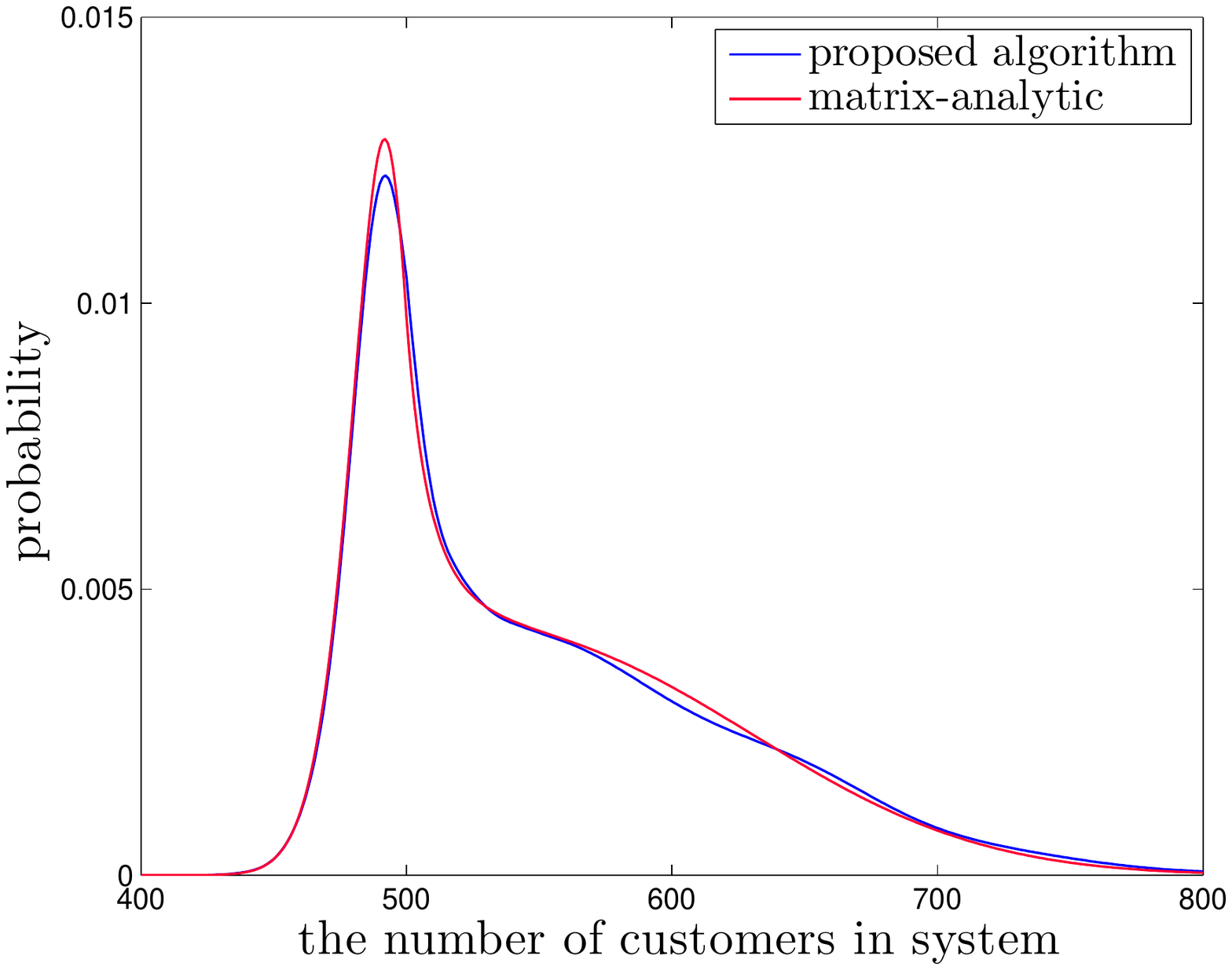}}
\subfloat[$0.25\times 0.25$ finite elements] {\label{fig:FE025}
\includegraphics[width=2.45in]{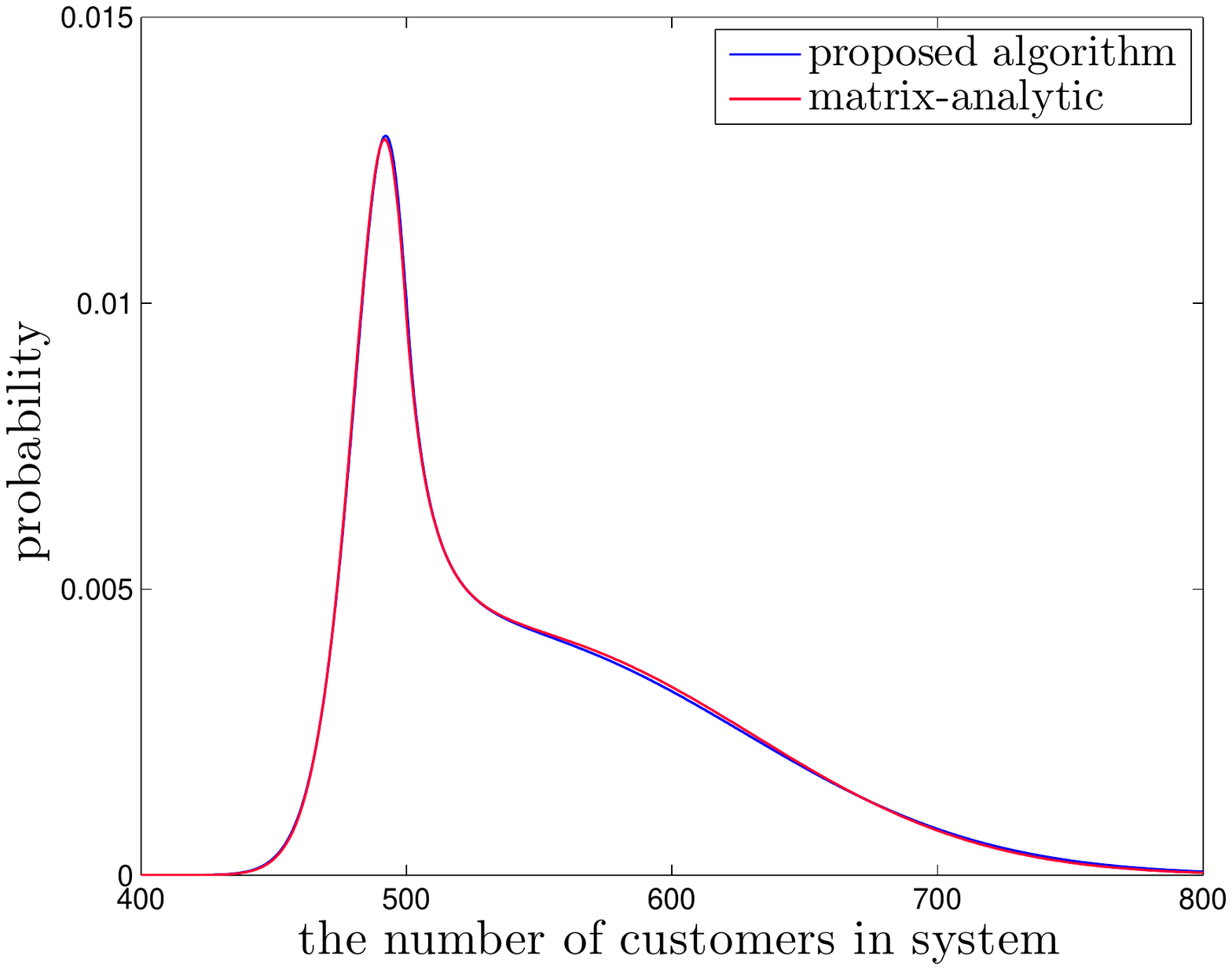}}\caption{The output of the
proposed algorithm with different meshes.}%
\label{fig:FE}%
\end{figure}

\begin{table}[t]
\centering
\begin{tabular}
[c]{l|lll}
& $0.5\times0.5$ & $0.25 \times0.25$ & Matrix-analytic\\\hline
Mean queue length & $54.17$ & $54.17$ & $54.05$\\
Abandonment fraction & $0.05181$ & $0.05182$ & $0.05173$\\
$\mathbb{P}[N(\infty)>470]$ & $0.9701$ & $0.9702$ & $0.9694$\\
$\mathbb{P}[N(\infty)>500]$ & $0.6838$ & $0.6835$ & $0.6818$\\
$\mathbb{P}[N(\infty)>600]$ & $0.2244$ & $0.2241$ & $0.2229$\\
$\mathbb{P}[N(\infty)>750]$ & $0.008233$ & $0.008246$ & $0.006395$%
\end{tabular}
\caption{The output of the proposed algorithm using different meshes.}%
\label{tab:finer_mesh}%
\end{table}

\begin{figure}[t]
\centering
\subfloat[$0.25\times 0.25$ finite elements] {\label{fig:Naive025}
\includegraphics[width=2.45in]{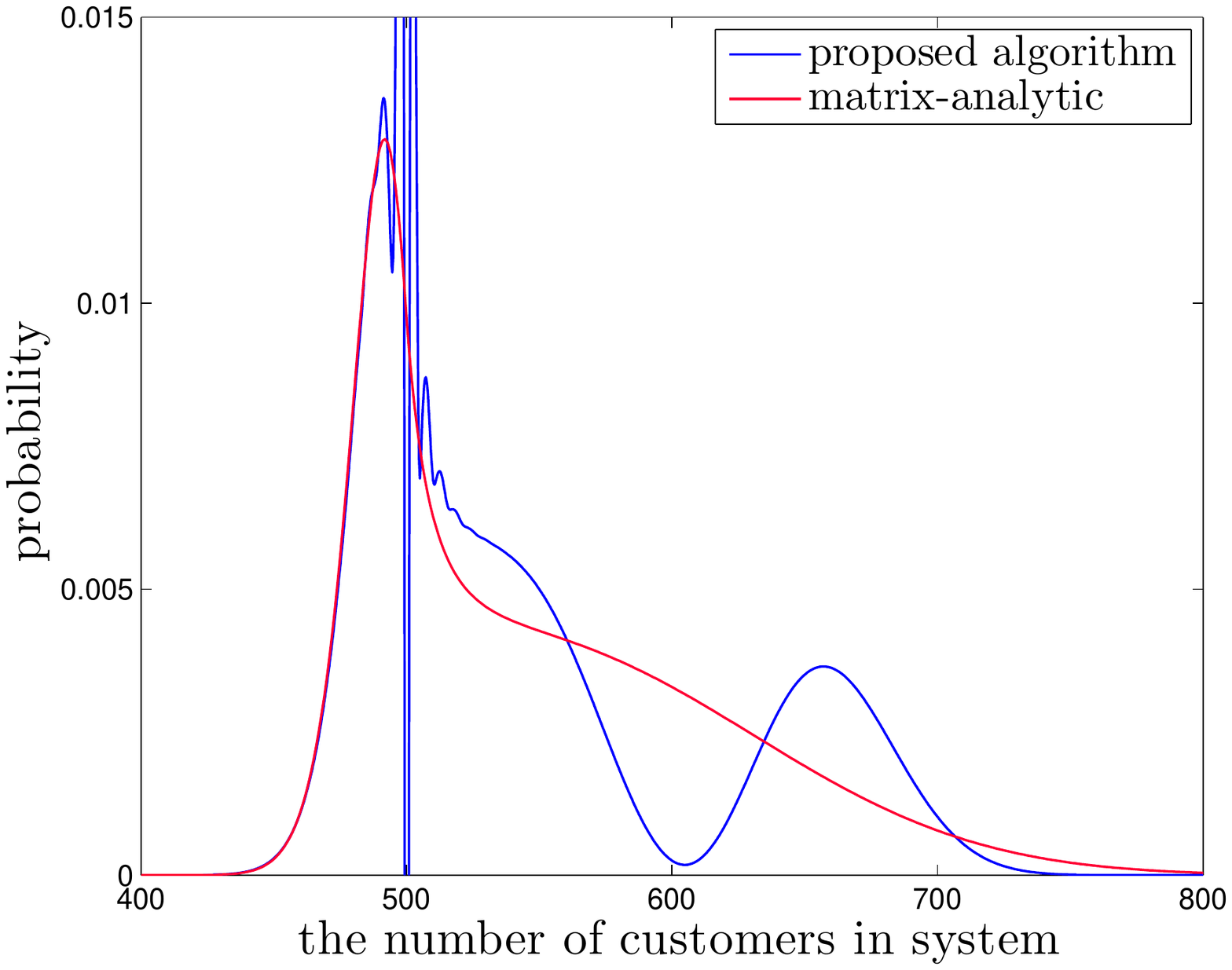}}
\subfloat[$0.125\times 0.125$ finite elements] {\label{fig:Naive01}
\includegraphics[width=2.45in]{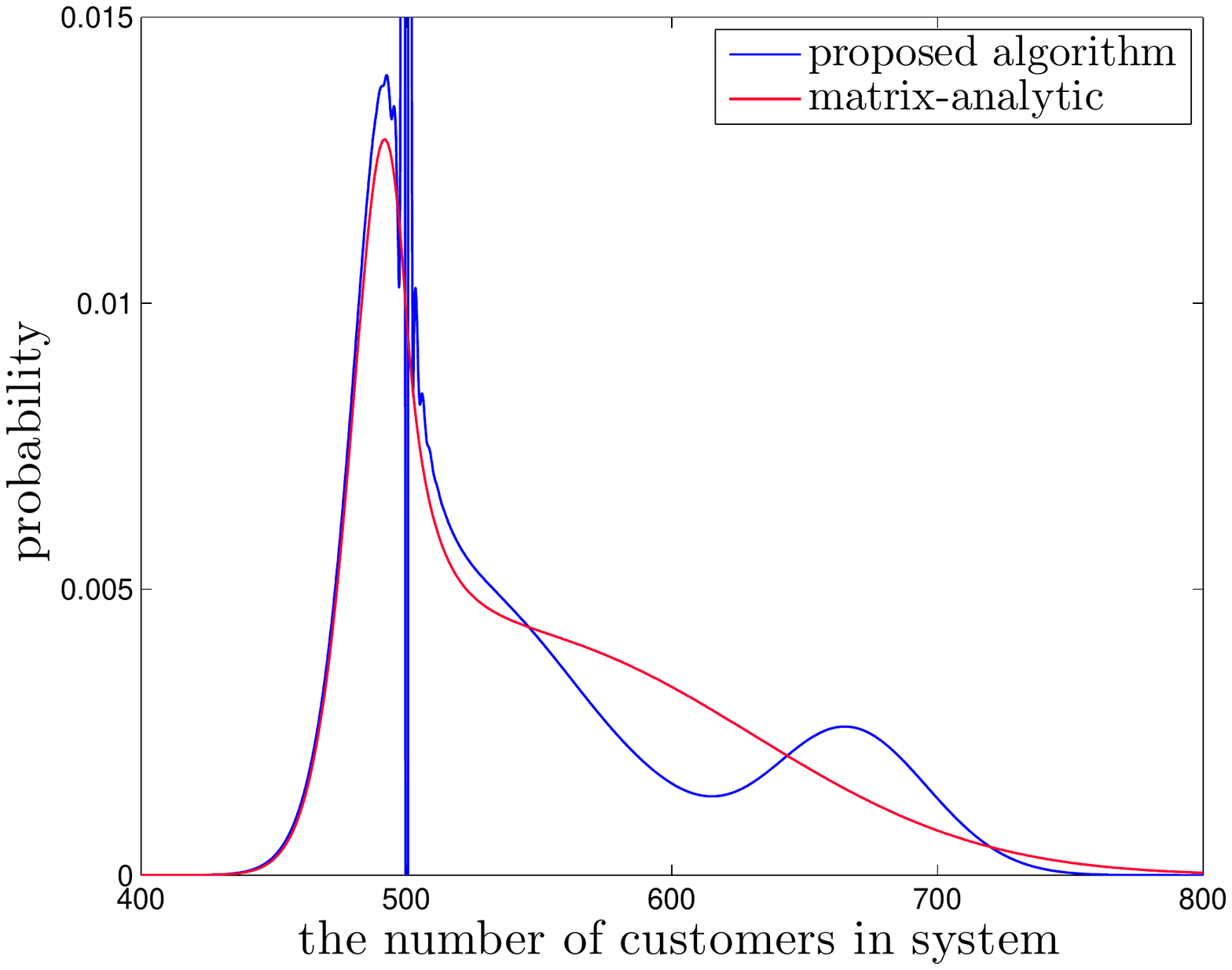}}\caption{The output
of the proposed algorithm with the \textquotedblleft naive\textquotedblright%
\ reference density and different meshes.}%
\label{fig:Naive}%
\end{figure}

When both the reference density and the truncation hypercube are fixed, using
a finer mesh may produce smaller approximation error. However, a finer mesh
yields more basis functions, which in turn lead to a larger condition number
for the matrix $A$ in (\ref{eq:equation}). If the condition number of $A$ is
too large, the round-off error in solving (\ref{eq:equation}) becomes
considerable. So a finer mesh does not necessarily yield a more accurate output.

Let us test different meshes for the second scenario in
Section~\ref{sec:Example1}. We keep the same settings for the algorithm except
the size of finite elements. The output with $1.0\times1.0$ finite elements is
plotted in Figure~\ref{fig:FE1}. With this mesh, the algorithm does not
perform well at the intervals where the stationary density varies quickly. We
need a finer mesh to improve the accuracy. In this case, the condition number
of $A$ is $5.70\times10^{20}$. Recall that to produce the curve in
Figure~\ref{fig:MH2M500}, we use a mesh consisting of $0.5\times0.5$ finite
elements. With this mesh, the condition number of $A$ is $1.15\times10^{23}$.
When the element size is further reduced to $0.25\times0.25$, the condition
number of $A$ grows to $7.13\times10^{27}$. As illustrated in
Figure~\ref{fig:FE025}, the output of the algorithm fits the exact stationary
distribution well. When we compare Figures~\ref{fig:MH2M500}
and~\ref{fig:FE025}, however, there is barely any difference noticeable
between the algorithm outputs. To confirm that this mesh is not superior to
the one with $0.5\times0.5$ finite elements, we list several performance
estimates in Table~\ref{tab:finer_mesh}. In this table, the results in
Table~\ref{tab:MH2500M} are duplicated for comparison purposes. The difference
between the algorithm outputs using these two meshes is negligible.
Considering the modeling error of the diffusion model, we can assert that
using $0.5\times0.5$ finite elements is sufficient to produce an accurate
approximation for this queue.

Given an appropriate reference density and the associated truncation
hypercube, the above discussion has demonstrated an approach to selecting a
mesh. Beginning with two meshes, with one finer than the other, we compare the
algorithm outputs using these two meshes. If obvious difference is observed,
the coarser mesh should be discarded and a further finer mesh is explored.
Continue this procedure until the difference between the outputs of two meshes
are negligible. Then, the coarser\ one of the remaining two is selected as an
appropriate mesh.

We would also demonstrate that with an improper reference density, a finer
mesh cannot make the algorithm yield an adequate output. Let us go back to the
example in Section~\ref{sec:InfluenceReference} with the \textquotedblleft
naive\textquotedblright\ reference density. We set the truncation rectangle to
be $K=[-7,32]\times\lbrack-7,32]$ and the size of finite elements to be
$0.25\times0.25$. The output is shown in Figure~\ref{fig:Naive025}. Although
the curve appears smoother than the one in Figure~\ref{fig:MH2500naive2} with
$0.5\times0.5$ finite elements, the output still fails to capture the exact
stationary distribution. This time, the condition number of $A$ is
$3.91\times10^{195}$. There is no doubt that such an ill-conditioned matrix
will bring about a huge round-off error in solving (\ref{eq:equation}). A mesh
with $0.125\times0.125$ finite elements is also investigated and the algorithm
output is plotted in Figure~\ref{fig:Naive01}. The condition number of $A$
increases to $6.35\times10^{198}$ and the algorithm misses the target as well.

\subsection{Gauss-Legendre quadrature}

\begin{table}[t]
\centering
\begin{tabular}
[c]{l|llll}
& $m=4$ & $m=8$ & $m=16$ & Matrix-analytic\\\hline
Mean queue length & $54.17$ & $54.17$ & $54.17$ & $54.05$\\
Abandonment fraction & $0.05181$ & $0.05181$ & $0.05181$ & $0.05173$\\
$\mathbb{P}[N(\infty)>470]$ & $0.9701$ & $0.9701$ & $0.9701$ & $0.9694$\\
$\mathbb{P}[N(\infty)>500]$ & $0.6833$ & $0.6838$ & $0.6839$ & $0.6818$\\
$\mathbb{P}[N(\infty)>600]$ & $0.2245$ & $0.2244$ & $0.2244$ & $0.2229$\\
$\mathbb{P}[N(\infty)>750]$ & $0.008235$ & $0.008233$ & $0.008232$ &
$0.006395$%
\end{tabular}
\caption{The output of the proposed algorithm with different quadrature
orders.}%
\label{tab:degrees}%
\end{table}

Before solving the linear system (\ref{eq:equation}), we must generate the
matrix $A$ and the vector $v$ whose entries are given by (\ref{eq:coeff}). We
follow a Gauss-Legendre quadrature rule to compute the integral for each
entry. The integral is taken over a two-dimensional rectangle and the
quadrature rule evaluates the integrand at $m$ points in each dimension. The
results are more accurate when a larger $m$ is used. In
Section~\ref{sec:Numerical}, we take $m=8$ in the numerical examples. Here, we
briefly discuss the impact of the order $m$.

Several performance estimates are listed in Table~\ref{tab:degrees}. We keep
the same settings for the algorithm\ except the quadrature order in each
dimension. For the convenience of comparison, the results in
Table~\ref{tab:MH2500M} are duplicated in this table. Clearly, the
Gauss-Legendre quadrature of order $m\geq4$ is sufficiently accurate for our purposes.

\subsection{Computational complexity}

\label{sec:complexity}

\begin{table}[t]
\centering
\begin{tabular}
[c]{l|llll}
& $1.0\times1.0$ & $0.5\times0.5$ & $0.25\times0.25$ & $0.125\times
0.125$\\\hline
Dimension $m_{C}$ & $5776$ & $23716$ & $96100$ & $386884$\\
Constructing $A$ and $v$ & $6.63$ & $27.3$ & $109$ & $455$\\
Solving (\ref{eq:equation}) & $0.0780$ & $0.359$ & $2.29$ & $18.2$%
\end{tabular}
\caption{Computation time (in seconds) of the proposed algorithm using
different meshes.}%
\label{tab:Time}%
\end{table}

Let $d$, the dimension of the diffusion model, be fixed. The size of $A$ is
$m_{C}\times m_{C}$ where $m_{C}$ is the dimension of the functional space $C$
given by (\ref{eq:Nn}). The matrix $A$\ is sparse. There are at most $6^{d}$
nonzero entries in each row or column. Hence, it takes $O(m_{C})$ arithmetic
operations to construct $A$. We may apply Gaussian elimination to solve the
linear system (\ref{eq:equation}). When the basis functions are properly
ordered, the nonzero entries of $A$ are confined to a diagonally bordered band
of width $O(m_{C}^{(d-1)/d})$. Hence, solving (\ref{eq:equation}) requires
$O(m_{C}^{(2d-1)/d})$ operations as $m_{C}\rightarrow\infty$.

The computation time (measured by seconds) for various meshes can be found in
Table~\ref{tab:Time}, where we list both the time for constructing $A$ and $v$
and the time for solving (\ref{eq:equation}). When computing $A$ and $v$, we
follow a Gauss-Legendre quadrature rule with $m=8$ points in each dimension.
The truncation rectangle is set to be $K=[-7,32]\times\lbrack-7,32]$. Each
mesh is obtained by setting the size of finite elements. The dimension $m_{C}$
increases by around four times as the width of each finite element is reduced
by half. The proposed algorithm is tested on a laptop with a 2.66GHz Intel
Core 2 Duo processor and eight gigabytes memory. Both $A$ and $v$ are produced
by our \textrm{C\raise.3ex\hbox{\small++}} package. The linear system
(\ref{eq:equation}) is solved by Matlab. These two parts are connected via a
MEX interface that comes with Matlab.

\section{Concluding remarks}

\label{sec:Conclusion}

In this paper, we proposed two approximate models for many-server queues with
customer abandonment. Both these models are diffusion processes and they
differ in how the abandonment process is approximated. A finite element
algorithm was proposed for computing the stationary distribution of each
model. The essential part of the algorithm is a reference density that
controls the convergence of the algorithm. To construct a reference density,
we conjectured that the limit queue length process has a certain Gaussian
tail. Using this conjecture, we proposed a systematic approach to choosing a
reference density. With the proposed reference density, the output of the
algorithm is stable and accurate. Numerical examples indicate that the
diffusion models are good approximations for many-server queues.

Assume that the stationary density $g$ is twice differentiable in
$\mathbb{R}^{d}$ and vanishes at infinity. Using the basic adjoint
relationship (\ref{eq:bar_density}) and applying integration by parts twice,
we have
\[
\mathcal{G}^{\ast}g(x)=0\qquad\text{for all }x\in\mathbb{R}^{d}%
\]
where $\mathcal{G}^{\ast}$ is the adjoint operator of the generator
$\mathcal{G}$. Fix a finite domain $K\subset\mathbb{R}^{d}$ large enough. One
can solve the stationary density $g$ by the Dirichlet problem%
\[%
\begin{cases}
\mathcal{G}^{\ast}g(x)=0 & \text{for }x\text{ in the interior of }K\text{,}\\
g(x)=0 & \text{for }x\text{ on the boundary of }K\text{.}%
\end{cases}
\]
Such a Dirichlet problem can be solved via a finite difference algorithm.
Alternatively, for each test function $f$, one may apply integration by parts
once to the basic adjoint relationship to obtain an equation that involves the
first derivatives of $g$ and the first derivatives of $f$. From this weak
formulation, fixing a large enough finite domain $K$ and assuming that $g$ is
zero on the boundary of $K$, one may apply a standard Galerkin finite element
method to compute the stationary density $g$ on $K$. See, e.g.,
\cite{KovalovLinetskyMarcozzi07}. Both the finite difference algorithm and the
Galerkin method do not use a reference density. A future research topic is to
compare the efficiency and accuracy of these two algorithms with the proposed
algorithm in this paper.

The dimension of the functional space $C$ in Section~\ref{sec:FEM} grows
exponentially in $d$, the dimension of the diffusion model. As a consequence,
both the computation time and the memory usage increases exponentially in $d$.
When $d$ is not small, the curse of dimensionality is a serious challenge for
the proposed algorithm as well as any other algorithms. To reduce the
dimension of $C$, one possible approach is to investigate a reference density
that potentially shares more common features with the stationary density. Such
a reference density may enable us to compute the stationary density with a
moderate number of basis functions when $d$ is not small. Another possible
direction to reduce the computational complexity of the algorithm is to
investigate a low-rank matrix approximation for the linear system
(\ref{eq:equation}). The technique of random sampling may be explored. See
\cite{KannanVempala10} for more details.

\appendix

\section*{Appendix}

\subsection*{Proof of Proposition \ref{prop:independence}}

Recall that $K$ is the compact support of $C$ and the basis functions of $C$
are given by (\ref{eq:basisFunctions}). We use $C_{0}^{1}(K)$ to denote the
set of real-valued functions on a neighborhood of $K$ that are continuously
differentiable and have compact support in $K$. Clearly, $C\subset C_{0}%
^{1}(K)$. For any $f,\hat{f}\in C_{0}^{1}(K)$, we define an inner product by
\[
\langle f,\hat{f}\rangle_{D(K)}=\sum_{j=1}^{d}\int_{K}\frac{\partial
f(x)}{\partial x_{j}}\frac{\partial\hat{f}(x)}{\partial x_{j}}\,\mathrm{d}x
\]
and let $W_{0}^{1,2}(K)$ be the closure of $C_{0}^{1}(K)$ in the norm induced
by this inner product. Then, $W_{0}^{1,2}(K)$ is a Hilbert space and $C\subset
W_{0}^{1,2}(K)$.

\begin{proof}[Proof of Proposition \ref{prop:independence}]
Since $\mathcal{G}$ is a linear operator, it suffices to show that for any
$f_{0}\in C$, we must have $f_{0}=0$ if $\mathcal{G}f_{0}=0$ in $L^{2}(\mathbb{R}^{d},r)$.
The uniform elliptic operator $\mathcal{G}$ can be written into the divergence
form as in (8.1) of \cite{GilbargTrudinger01}, i.e.,
\[
\mathcal{G}f(x)=\sum_{j=1}^{d}\hat{b}_{j}(x)\frac{\partial f(x)}{\partial
x_{j}}+\frac{1}{2}\sum_{j=1}^{d}\sum_{\ell=1}^{d}\frac{\partial(\Sigma_{j\ell
}(x)\partial f(x)/\partial x_{j})}{\partial x_{\ell}}
\]
for each $f\in C_{b}^{2}(\mathbb{R}^{d})$, where\[
\hat{b}_{j}(x)=b_{j}(x)-\frac{1}{2}\sum_{\ell=1}^{d}\frac{\partial
\Sigma_{j\ell}(x)}{\partial x_{\ell}}.
\]
Let $U\subset\mathbb{R}^{d}$ be a connected open set that is bounded and
contains $K$. Since $r>0$ and $\mathcal{G}f_{0}$ is continuous in the interior
of each finite element, we must have $\mathcal{G}f_{0}=0$ in $K$ except on the
boundaries of certain finite elements where $\mathcal{G}f_{0}$ is not defined.
Hence, $\mathcal{G}f_{0}=0$ in $U$ in the weak sense (see (8.2) of
\cite{GilbargTrudinger01}). Note that $b$, $\Sigma$, and the partial
derivatives of $\Sigma$ are all continuous, so both $\hat{b}$ and $\Sigma$ are
bounded in $U$. Because $f_{0}\in W_{0}^{1,2}(K)$, it follows from
Corollary~8.2 of \cite{GilbargTrudinger01} that $f_{0}=0$ in $K$, and thus
$f_{0}=0$ in $\mathbb{R}^{d}$.
\end{proof}

\subsection*{Proof of Proposition \ref{prop:Hn}}

Given a compact set $K\subset\mathbb{R}^{d}$, let $C_{b}^{2}(K)$ be the set of
real-valued functions on a neighborhood of $K$ that are twice continuously
differentiable with bounded first and second derivatives in $K$. For each
$f\in C_{b}^{2}(K)$, define a norm $\left\Vert \cdot\right\Vert _{H^{2}(K)}$
by%
\[
\left\Vert f\right\Vert _{H^{2}(K)}^{2}=\int_{K}\bigg(f^{2}(x)+\max
_{j=1,\ldots,d}\Big(\frac{\partial f(x)}{\partial x_{j}}\Big)^{2}+\max
_{j,\ell=1,\ldots,d}\Big(\frac{\partial^{2}f(x)}{\partial x_{j}\partial
x_{\ell}}\Big)^{2}\bigg)r(x)\,\mathrm{d}x.
\]
Because both $b$ and $\Sigma$ are bounded in $K$, there exists $\kappa
_{0}(K)>0$ such that
\begin{equation}
\int_{K}(\mathcal{G}f(x))^{2}r(x)\,\mathrm{d}x\leq\kappa_{0}(K)\left\Vert
f\right\Vert _{H^{2}(K)}^{2}\qquad\text{for all }f\in C_{b}^{2}(K)\text{.}%
\label{eq:H2K}%
\end{equation}
Let $\bar{C}_{b}^{2}(K)$ be the closure of $C_{b}^{2}(K)$ in the above norm. A
standard procedure can be used to define the first-order and the second-order
derivatives for each $f\in\bar{C}_{b}^{2}(K)$. Then, the operator
$\mathcal{G}$ can be extended to $\bar{C}_{b}^{2}(K)$ and inequality
(\ref{eq:H2K}) holds for all $f\in\bar{C}_{b}^{2}(K)$.

\begin{proof}[Proof of Proposition \ref{prop:Hn}]
It suffices to prove that for any $f_{0}\in C_{b}^{2}(\mathbb{R}^{d})$, there
exists a sequence of functions $\{\varphi_{k}\in C_{k}:k\in\mathbb{N}\}$ such
that
\[
\left\Vert \mathcal{G}\varphi_{k}-\mathcal{G}f_{0}\right\Vert
\rightarrow0 \qquad\text{as } k\rightarrow\infty.
\]
Fix $\varepsilon>0$. Because $K_{k}\uparrow\mathbb{R}^{d}$ as $k\rightarrow
\infty$, by (\ref{eq:Gassump}) and the Cauchy-Schwartz inequality, there
exists $a\in\mathbb{N}$ such that%
\begin{equation}
\int_{\mathbb{R}^{d}\setminus K_{a}}(\mathcal{G}f_{0}(x))^{2}r(x)\,\mathrm{d}%
x<\frac{\varepsilon^{2}}{2}.\label{eq:RK}%
\end{equation}
Consider the finite hypercube $K_{a}$. By (\ref{eq:H2K}), there exists
$\kappa_{0}(K_{a})>0$ such that
\begin{equation}
\int_{K_{a}}(\mathcal{G}f(x))^{2}r(x)\,\mathrm{d}x\leq\kappa_{0}%
(K_{a})\left\Vert f\right\Vert _{H^{2}(K_{a})}^{2}\qquad\text{for all }%
f\in\bar{C}_{b}^{2}(K_{a})\text{.} \label{eq:Ka}%
\end{equation}
A polynomial can be used to approximate $f_{0}$ on $K_{a}$. By Proposition 7.1
in the appendix of \cite{ethkur86}, there exists a polynomial
$f_{\operatorname*{p}}$ such that
\[
\left\Vert f_{\operatorname*{p}}-f_{0}\right\Vert _{H^{2}(K_{a})}%
<\frac{\varepsilon}{2\sqrt{2\kappa_{0}(K_{a})}}.
\]
For the lattice mesh $\Delta_{k}$, let $\Lambda_{a,k}$ be the set of its nodes
in the interior of $K_{a}$. For any $k\geq a$, let $\varphi_{k}$ be a function
in $C_{k}$ such that $\varphi_{k}(x)=0$ for all $x\in\mathbb{R}^{d}\setminus
K_{a}$ and%
\[
\varphi_{k}(x)=f_{\operatorname*{p}}(x)\qquad\text{and}\qquad\frac
{\partial\varphi_{k}(x)}{\partial x_{j}}=\frac{\partial f_{\operatorname*{p}%
}(x)}{\partial x_{j}}%
\]
for $j=1,\ldots,d$ and all $x\in
\Lambda_{a,k}$.
Clearly, $\varphi_{k}\in\bar{C}_{b}^{2}(K_{a})$. Because the sequence of
meshes $\{\Delta_{k}:k\in\mathbb{N}\}$ is regularly refined, there exists a
constant $\kappa_{1}>0$ such that $\eta_{\Delta_{k}}<\kappa_{1}$ for all
$k\geq a$. Using the interpolation error estimate in Theorem 6.6 of
\cite{OdenReddy76}, we have%
\[
\left\Vert \varphi_{k}-f_{\operatorname*{p}}\right\Vert _{H^{2}(K_{a})}%
\leq\kappa_{1}^{2}\kappa_{2}\kappa_{3}\Big(\int_{\mathbb{R}^{d}}%
r(x)\,\mathrm{d}x\Big)^{1/2}\left\vert \Delta_{k}\right\vert ^{2},
\]
where $\kappa_{2}>0$ is a constant independent of $\Delta_{k}$ and
$f_{\operatorname*{p}}$, and%
\[
\kappa_{3}=\sup\left\{  \left\vert \frac{\partial^{4}f_{\operatorname*{p}}%
(x)}{\partial x_{1}^{m_{1}}\cdots\partial x_{d}^{m_{d}}}\right\vert :x\in
K_{a};\,m_{1}+\cdots+m_{d}=4\right\}  <\infty.
\]
Hence, there exists $\delta_{0}>0$ such that
\[
\left\Vert \varphi_{k}-f_{\operatorname*{p}}\right\Vert _{H^{2}(K_{a})}%
<\frac{\varepsilon}{2\sqrt{2\kappa_{0}(K_{a})}}%
\]
whenever $\left\vert \Delta_{k}\right\vert <\delta_{0}$. In this case,
\[
\left\Vert \varphi_{k}-f_{0}\right\Vert _{H^{2}(K_{a})}\leq\left\Vert
\varphi_{k}-f_{\operatorname*{p}}\right\Vert _{H^{2}(K_{a})}+\left\Vert
f_{\operatorname*{p}}-f_{0}\right\Vert _{H^{2}(K_{a})}<\frac{\varepsilon
}{\sqrt{2\kappa_{0}(K_{a})}}.
\]
By (\ref{eq:Ka}),%
\begin{equation}
\int_{K_{a}}(\mathcal{G}\varphi_{k}(x)-\mathcal{G}f_{0}(x))^{2}%
r(x)\,\mathrm{d}x\leq\kappa_{0}(K_{a})\left\Vert \varphi_{k}-f_{0}\right\Vert
_{H^{2}(K_{a})}^{2}<\frac{\varepsilon^{2}}{2}.\label{eq:GfK}%
\end{equation}
It follows from (\ref{eq:RK}) and (\ref{eq:GfK}) that
$\left\Vert \mathcal{G}\varphi_{k}-\mathcal{G}f_{0}\right\Vert <\varepsilon$
whenever $k\geq a$ and $\left\vert \Delta_{k}\right\vert <\delta_{0}$.
\end{proof}

\section*{Acknowledgements}

The authors would like to thank Ton Dieker and Vadim Linetsky for their
helpful comments.
\bibliographystyle{imsart-nameyear}
\bibliography{dai}

\end{document}